\renewcommand{\fnum@algorithm}{\fname@algorithm}
\numberwithin{equation}{section}
\newtheorem{Remark}{Remark}[section]
\newtheorem{Theorem}{Theorem}[section]
\newtheorem{Lemma}{Lemma}[section]
\newtheorem{Proposition}{Proposition}[section]
\newtheorem{Corollary}{Corollary}[section]
\newtheorem{Condition}{Condition}[section]
\newcommand{\be}{\begin{equation}}
\newcommand{\ee}{\end{equation}}
\newcommand{\bee}{\begin{equation*}}
\newcommand{\eee}{\end{equation*}}
\newcommand{\bi}{\begin{itemize}}
\newcommand{\ei}{\end{itemize}}
\def \C{\mathbb{C}}
\def \D{\mathbb{D}}
\def \E{\mathbb{E}}
\def \M{\mathbb{M}}
\def \N{\mathbb{N}}
\def \P{\mathbb{P}}
\def \R{\mathbb{R}}
\def \S{\mathbb{S}}
\def \Z{\mathbb{Z}}
\def \X{\mathbb{X}}
\def \Bc{{\mathcal B}}
\def \Smc{{\mathcal S}}
\def \Ac{{\mathcal A}}
\def \Ec{{\mathcal E}}
\def \Pc{{\mathcal P}}
\def \Fc{{\mathcal F}}
\def \Gc{{\mathcal G}}
\def \Uc{{\mathcal U}}
\def \eps{\varepsilon}
\def \Leb{\operatorname{\texttt{Leb}}}
\newcommand{\one}{{\boldsymbol{1}}}
\newcommand{\ebd}{{\boldsymbol{e}}}
\newcommand{\qbar}{{\bar{q}}}\newcommand{\Qbar}{{\bar{Q}}}
\newcommand{\Ybar}{{\bar{Y}}}
\newcommand{\Zbar}{{\bar{Z}}}
\newcommand{\Smchat}{{\hat{\Smc}}}
\newcommand{\Btil}{{\tilde{B}}}
\newcommand{\Ctil}{{\tilde{C}}}
\newcommand{\etatil}{{\tilde{\eta}}}
\newcommand{\Itil}{{\tilde{I}}}
\newcommand{\Ntil}{{\tilde{N}}}
\newcommand{\psitil}{{\tilde{\psi}}}
\newcommand{\qtil}{{\tilde{q}}}
\begin{document}
\title{Moderate Deviation Principle for Join-The-Shortest-Queue-d Systems}
\author{Zhenhua Wang}
	\address{Shandong University}
\author{Ruoyu Wu}
	\address{Iowa State University} 
	\email{zhenhuaw@sdu.edu.cn, ruoyu@iastate.edu}

\date{\today}
\begin{abstract}
    The Join-the-Shortest-Queue-d routing policy is considered for a large system with $n$ servers.
	Moderate deviation principles (MDP) for the occupancy process and the empirical queue length process are established as $n\to \infty$. 
    Each MDP is formulated in terms of a large deviation principle with an appropriate speed function in a suitable infinite-dimensional path space.
	Proofs rely on certain variational representations for exponential functionals of Poisson random measures.
    As a case study, the convergence of rate functions for systems with finite buffer size $K$ to the rate function without buffer is analyzed, as $K \to \infty$.\\

    \noindent {\bf AMS 2020 subject classifications:} 60F10, 90B15, 91B70, 60J74, 34H05 \\

    \noindent {\bf Keywords:} large deviations, moderate deviations, load balancing, JSQ(d), jump-Markov processes in infinite dimensions.
\end{abstract}
\maketitle	

\section{Introduction}\label{sec:introduction}
For $n\in \N$, we consider a large queueing system with $n$ parallel servers, each processing jobs in its queue at rate $1$. Service times and inter-arrival times are exponentially distributed and are mutually independent. Tasks arrive at a single dispatcher at rate $n\lambda$, $\lambda > 0$, and are immediately routed to one of $n$ processors. Upon each task arrival, the dispatcher selects $d$ servers uniformly at random from all $n$ servers and routes the task to the shortest server among the selected $d$ servers. This routing strategy is known as  the join-the-shortest-queue-$d$ (JSQ($d$)) policy, or the power-of-$d$ policy. Let $X^n_k$ be the $k$-th server among all $n$ servers. 

Denote by $\mu^n_i(t):= \frac{\# \{ X^n_k(t)=i\}}{n}$ the proportion of queues with length $i$ at time $t$, and $Q^n_i(t): =\frac{\# \{ X^n_k(t)\geq i \}}{n}= \sum_{j\geq i}\mu^n_j(t)$ the proportion of queues with length at least $i$ at time $t$.  
In this work, we first establish a moderate deviation principle (MDP) for $Q^n$ (see Theorem \ref{thm:MDP}) .
The associated rate function in the above JSQ ($d$) system takes a variational form and is given as the value function of an infinite-dimensional deterministic optimal control problem (see \eqref{eq:rateI}).

Next, we study a second scenario, where a buffer $K\in \N_+$ is set in advance. In this scenario, upon each arrival, the central dispatcher still selects $d$ servers uniformly from all $n$ servers, but only routes the task to the shortest server among the selected $d$ servers if the shortest server has queue length less than $K$. 
Otherwise, the task is discarded. 
We refer to this setup as the JSQ($d$) system with buffer $K$, to distinguish it from the original JSQ($d$) system without buffer constraint.
We provide a MDP with the associated rate function for this buffered system in Theorem \ref{thm:MDP.buffer}. Additionally, we conduct a case study to examine the convergence of the rate function for the JSQ($d$) systems with buffers to the rate function for the original JSQ($d$) system as the buffer size $K\to\infty$.

The JSQ($d$) system considered in this paper is closely related to the traditional join-shortest-queue (JSQ) system, wherein the central dispatcher assigns every task to the shortest server among all $n$ servers at the moment the task arrives. 
Various asymptotic properties of JSQ systems have been explored in the literature, see e.g.\ \cite{BudhirajaFriedlanderWu2019many, banerjee2019join, braverman2020steady, EschenfeldtGamarnik18} and references therein. 
In particular, \cite{EschenfeldtGamarnik18} establishes a central limit theorem under the heavy traffic scaling, and \cite{BudhirajaFriedlanderWu2019many} obtains a large deviation principle (LDP). 

Both JSQ and JSQ($d$) are widely used models for load balancing in distributed resource systems, particularly in applications such as cloud computing, file transfers, and database queries. Implementing the JSQ policy requires instantaneous knowledge of the queue lengths at all servers, which can impose significant communication overhead and may not scale well in environments with a large number of servers. This challenge has led to the exploration of JSQ($d$) systems, where the dispatcher, lacking knowledge of all server lengths, assigns an incoming task to the server with the shortest queue among $d$ servers selected uniformly at random.

The investigation of the JSQ($d$) system within the context of large-scale queueing networks was initiated by  \cite{Mitzenmacher01, VvedenskayaDobrushinKarpelevich96}, and the mean field limit indicates that even a small value of $d=2$ can lead to substantial performance improvements in a many-server regime as $n\to\infty$. 
Since then, this scheme and its various adaptations have been extensively studied, see \cite{aghajani2019hydrodynamic, bramson2012asymptotic,  brightwell2018supermarket, budhiraja2019diffusion,  BudhirajaMukherjeeWu2019supermarket, cardinaels2022power, graham2005functional, luczak2006maximum, luczak2005strong, MukherjeeBorstLeeuwaardenWhiting18, RuttenMukherjee2022load}, and the survey paper \cite{Der2022scalable}. 
In particular, \cite{graham2005functional} provides a central limit theorem for the JSQ (d) system, \cite{luczak2006maximum}  investigates the maximum queue length in the equilibrium distribution, and \cite{luczak2005strong} explores strong approximation results for JSQ($d$) systems.
Additionally, \cite{RuttenMukherjee2022load,BudhirajaMukherjeeWu2019supermarket} analyzes scaling limits of the JSQ(d) system on graphs and its mean-field approximation. 
\cite{brightwell2018supermarket, MukherjeeBorstLeeuwaardenWhiting18} studies the load balancing system in the Halfin?Whitt regime where the number of selected servers is not a fixed integer $d$ but increases with $n$.


The paper is organized as follows. Section \ref{sec:model} introduces the background on the Poisson Random Measures (PRMs) and preliminary results for the JSQ ($d$) system. Proposition \ref{prop:LLN} provides a law of large numbers and describes the deterministic dynamic of the limit system (see \eqref{eq:ODE}). 
In Section \ref{sec:MDP}, Theorem \ref{thm:MDP}  presents the MDP for the JSQ ($d$) system, and Theorem \ref{thm:I} offers an alternative representation for the rate function $I$. The proof for Theorem \ref{thm:MDP} is carried out in Section \ref{sec:proof}. Section \ref{sec:buffer} addresses the MDP result for the JSQ ($d$) system with a buffer.
We then analyze the convergence of the rate function from the JSQ ($d$) systems with buffer $K$ to that in the original JSQ ($d$) system in Section \ref{sec:case}, where the limit dynamic is taken to be the unique stationary solution to \eqref{eq:ODE}.

\section{Model setup and law of large numbers}\label{sec:model}

For a polish space $\S$, denote by $\Bc(\S)$ its Borel $\sigma$-field, and let $\M_b(\S)$ be the space of all real bounded measurable functions on $\S$ with $\|f\|_{L^\infty(\S)}:=\sup_{x\in \S} |f(x)|$ for $f\in \M_b(\S)$.
When $\S$ is equipped with the Lebesgue measure, set $L^2(\S)$ the space of all Borel measurable functions $f$ from $\S$ to $\R$ with $\|f\|_{L^2(\S)}:=\left(\int_{\S} |f(x)|^2 dx\right)^{1/2}<\infty$. 
Denote by $\N_+$ the collection of positive integers, and $\N := \N_+ \cup \{0\}$.

Let $l_2$ be the Hilbert space of square-summable sequences, equipped with the norm $\|\cdot \|_{l^2}$.    Denote by $L(l^2, l^2)$ the space of all bounded linear operators from $l^2$ to itself. We consider a time horizon $[0,T]$ for some $0<T<\infty$. Set $\X:= \R_+$, and $\X_t:=\X\times[0,t]$ for any $t\in(0,T]$, and we adapt several notations from \cite[Section 3]{BudhirajaWu2017moderate} as follows.
\begin{align*}
\Smchat^n:=& \left\{ q=(q_0, q_1, q_2,\dotsc)\in l^2: q_i\geq q_{i+1} \text{ and } nq_i \in \{0,1,\dotsc,n\} \; \forall i, q_0=1 \right\},\\
\Smchat:=& \left\{ q=(q_0, q_1, q_2,\dotsc)\in l^2: q_i\geq q_{i+1} \text{ and }0\leq q_i\leq 1 \; \forall i, q_0=1 \right\},\\
\C([0,T]: l^2):=& \{ \text{continuous functions from $[0,T]$ to $l^2$}\},\\
\D([0,T]: l^2):=& \{ \text{right continuous functions from $[0,T]$ to $l^2$ with left limits}\},\\
\M:=& \{ \text{finite compact measures on $(\X_T, \Bc(\X_T))$}\},
\end{align*}
where $\C([0,T]: l^2)$ and $\D([0,T]: l^2)$ are endowed with the uniform topology and the Skorokhod topology, respectively.

Denote by $X^n_k$ the queue length of $k$-th server among all $n$ servers. Let the arrival rate be $\lambda$ and the service rate be 1. 
Define $\mu^n_i(t):= \frac{\# \{ X^n_k(t)=i\}}{n}$ the proportion of queues with length $i$ at time $t$, and $Q^n_i(t): =\frac{\# \{ X^n_k(t)\geq i \}}{n}= \sum_{j\geq i}\mu^n_j(t)$ the proportion of queues with length at least $i$ at time $t$. 

We will now give a convenient evolution equation for $\{Q^n\}$ in terms of a collection of Poisson random measures. 
For a locally compact metric space $\S$, let $\mathcal{M}_{FC}(\S)$ represent the space of measures $\nu$ on $(\S,\mathcal{B}(\S))$ such that $\nu(K)<\infty$ for every compact $K\in\mathcal{B}(\S)$, equipped with the usual vague topology.
This topology can be metrized such that $\mathcal{M}_{FC}(\S)$ is a Polish space (see \cite{BudhirajaDupuisGanguly2015moderate} for one convenient metric).
A PRM $D$ on $\S$ with mean measure (or intensity measure) $\nu\in\mathcal{M}_{FC}(\S)$ is an $\mathcal{M}_{FC}$-valued random variable such that for each $H\in\mathcal{B}(\S)$ with $\nu(H)<\infty$, $D(H)$ is a Poisson random variable with mean $\nu(H)$ and for disjoint $H_1,\ldots,H_k\in\mathcal{B}(\S)$, the random variables $D(H_1),\ldots, D(H_k)$ are mutually independent random variables (cf. \cite{IkedaWatanabe1990SDE}).

The dynamic of JSQ(d) is given by the following.
\begin{align}
	Q_i^n(t) & = Q_i^n(0) + \frac{1}{n} \int_{[0,1]\times [0,t]} \one_{[0,R_i^n(Q^n(s-)))}(y) \, D_i^{n\lambda}(dy\,ds) \notag \\
	& \quad - \frac{1}{n} \int_{[0,1]\times[0,t]} \one_{[0,Q_i^n(s-)-Q_{i+1}^n(s-))}(y) \, \bar{D}_i^n(dy\,ds), \label{eq:JSQd-Xin} \\
	R_i^n(q) & = \left[\binom{nq_{i-1}}{d} - \binom{nq_i}{d}\right] \Big/ \binom{n}{d}, \quad i \ge 1, \label{eq:Rni}
\end{align}
where $D_i^{n\lambda}(dy\,ds)$ and $\bar{D}_i^n(dy\,ds)$ are mutually independent Poisson random measures (PRMs) on $[0,1]\times [0,T]$ with intensity $n\lambda \, dy\, ds$ and $n\, dy\, ds$, respectively. 
Here $\binom{m}{d}:=\frac{m(m-1)\dotsm(m-d+1)!}{d!}$ is $0$ if $m =0,1,\dotsc,d-1$.

Set $\lambda_0:= \lambda\vee 1$. Denote by $N(dr\, dy\, ds)$ a PRM on $\R_+\times \X_T$ with intensity $dr\, dy\, ds$. Let $N^n$ be a PRM on $\X_T$ defined as  
\bee  
{\color{black}N^n([0,t]\times A):= \int_{\R_+\times A\times [0,t]} 1_{[0, n]}(r) N(dr\, dy\, ds)\quad \forall A\in \Bc(\X),}
\eee 
and define $\Ntil^n(dy\,ds):= N^n(dy\,ds)-n\,dy\,ds$. Then 
define $G^n: \Smchat^n \times \X \to l^2$ as 
\be\label{eq:Gn} 
\begin{aligned}
& G^n(q,y):=\\
& \sum_{i\geq 1}G^n_i(q,y)\ebd_i=\sum_{i\geq 1} \left\{ \one_{[2(i-1)\lambda_0, 2(i-1)\lambda_0+\lambda R^n_i(q))}(y)  - \one_{[(2i-1)\lambda_0, (2i-1)\lambda_0 + (q_i-q_{i+1}))}(y) \right\} \ebd_i,
\end{aligned}
\ee 
where $\ebd_i$ is the $i$-th unit vector in $l^2$.
For each $i$, it follows that the even indexed interval has length equal to $\lambda R^n_i(q)$ while the odd one has length equal to $q_i-q_{i+1}$.
Define $b^n: \Smchat^n \to l^2$ by $b^n(q)=\int_\X G^n(q,y)dy$, that is,
\be\label{eq:bn} 
 b^n(q):=\sum_{i\geq 1} b^n_i(q)\ebd_i:= \sum_{i\geq 1} \left[ \lambda R_i^n(q)-\left( q_i-q_{i+1}\right) \right] \ebd_i.
\ee 
Then \eqref{eq:JSQd-Xin} can be rewritten as 
\begin{align}
	Q^n(t) & = Q^n(0) +\frac{1}{n} \int_{\X_t} G^n(Q^n(s-),y) \, N^{n}(dy\, ds) \notag  \notag \\
	& = Q^n(0) + \int_{[0,t]}b^n(Q^n(s))ds+ \frac{1}{n} \int_{\X_t} G^n(Q^n(s-),y) \, \tilde N^{n}(dy\, ds) \label{eq:JSQd.Qn}.
\end{align}
Intuitively, $G^n$ and $b^n$ above converges to $G: \Smchat \times \X\to l^2$ and $b: \Smchat \to l^2$ respectively, which are defined  as follows.
\be\label{eq:b.G}  
\begin{cases}
\begin{aligned}
G(q,y):=&\sum_{i\geq 1} G_i(q,y)\ebd_i\\
:=& \sum_{i\geq 1} \bigg\{\one_{\big[ 2(i-1)\lambda_0, \,2(i-1)\lambda_0+\lambda(q^d_{i-1}-q^d_i) \big)}(y)
 - \one_{\big[(2i-1)\lambda_0, (2i-1)\lambda_0+(q_{i}-q_{i+1})\big)}(y) \bigg\} \ebd_i,\\
b(q):=&\sum_{i\geq 1} b_i(q)\ebd_i := \sum_{i\geq 1 } \left[ \lambda\left(q_{i-1}^d-q^d_{i} \right)-\left( q_i-q_{i+1}\right)\right] \ebd_i=\int_\X G(q,y)dy.
\end{aligned}
\end{cases}
\ee 



\begin{Lemma}\label{lm:lips}
For each $n\in \N_+$, we have that
\begin{align}
\|b^n(q)-b^n(\bar q) \|_{l^2} &\leq \left(2\lambda dC_n+2\right)\|q-\bar q \|_{l^2}\quad\forall q, \bar q\in \Smchat^n, \label{eq:bn,blips}
\end{align}
where $C_n:=\left( \frac{n}{n-1}\dotsm \frac{n}{n-d+1} \right)$ converges to 1 as $n\to\infty$. Moreover, 
\begin{align}
\|b(q)-b(\bar q) \|_{l^2} & \leq \left(2\lambda d+2\right)\|q-\bar q \|_{l^2}, \quad \forall q, \bar q\in l^2, \label{eq:b,blips}\\
\|b^n(q)-b(q)\|_{l^2} &\leq   \frac{4\lambda d^2}{n}\|q\|_{l^2}\quad \forall q\in \Smchat^n. \label{eq:bn-b.bound} 
\end{align}
\end{Lemma}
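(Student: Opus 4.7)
The plan is to prove all three bounds by decomposing each coordinate of $b^n$ or $b$ into an arrival piece $\lambda(\alpha_{i-1}-\alpha_i)$ and a service piece $-(\beta_i-\beta_{i+1})$, and then applying the elementary observation that for any $\alpha=(\alpha_i)_{i\ge 0}\in l^2$,
\begin{equation*}
\sum_{i\ge 1}(\alpha_{i-1}-\alpha_i)^2\le 2\sum_{i\ge 1}(\alpha_{i-1}^2+\alpha_i^2)\le 4\|\alpha\|_{l^2}^2,
\end{equation*}
so first-differences pick up a factor of $2$ in $l^2$ norm. Applied to the service-piece differences $\beta_i=(q_i-\bar q_i)-(q_{i+1}-\bar q_{i+1})$, this immediately accounts for the ``$+2$'' summand in both \eqref{eq:bn,blips} and \eqref{eq:b,blips}.

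For \eqref{eq:bn,blips}, I rewrite $R^n_i(q)=c^n_{i-1}(q)-c^n_i(q)$ where $c^n_i(q):=\binom{nq_i}{d}/\binom{n}{d}$. By the first-difference observation applied to $\alpha_i:=c^n_i(q)-c^n_i(\bar q)$, it suffices to show $\|c^n(q)-c^n(\bar q)\|_{l^2}\le dC_n\|q-\bar q\|_{l^2}$. The polynomial $f(x):=x(x-1)\dotsm(x-d+1)/d!$ satisfies $|f'(x)|\le n^{d-1}/(d-1)!$ on $[0,n]$, and the arithmetic identity $n^d/[(d-1)!\binom{n}{d}]=dC_n$ then yields the coordinatewise bound $|c^n_i(q)-c^n_i(\bar q)|\le dC_n|q_i-\bar q_i|$. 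Squaring and summing gives the arrival-piece estimate, and combined with the service-piece bound proves \eqref{eq:bn,blips}. Inequality \eqref{eq:b,blips} is proved identically, with $c^n_i(q)$ replaced by $q_i^d$ and the coordinatewise estimate $|q_i^d-\bar q_i^d|\le d|q_i-\bar q_i|$ for $q_i,\bar q_i\in[0,1]$.

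The real work is in \eqref{eq:bn-b.bound}. Set $\gamma_i:=c^n_i(q)-q_i^d$, so $b^n(q)-b(q)=\lambda(\gamma_{i-1}-\gamma_i)_{i\ge 1}$; by the first-difference observation it suffices to prove $\|\gamma\|_{l^2}\le(\mathrm{const}\cdot d^2/n)\|q\|_{l^2}$. In the regime $nq_i\ge d$, I apply the telescoping identity
\begin{equation*}
\prod_{k=0}^{d-1}r_k-\prod_{k=0}^{d-1}s_k=\sum_{j=0}^{d-1}\Big(\prod_{k<j}r_k\Big)(r_j-s_j)\Big(\prod_{k>j}s_k\Big)
\end{equation*}
with $r_k=q_i$ and $s_k=(nq_i-k)/(n-k)$. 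A short computation gives $r_k-s_k=k(1-q_i)/(n-k)\ge 0$ and $0\le s_k\le r_k\le 1$, hence
\begin{equation*}
|\gamma_i|\le q_i^{d-1}(1-q_i)\sum_{j=1}^{d-1}\frac{j}{n-j}\le q_i\cdot\frac{d^2}{n}
\end{equation*}
up to an absolute constant. In the remaining regime $nq_i<d$ the convention makes $c^n_i(q)=0$ and $q_i<d/n$, so $|\gamma_i|=q_i^d\le q_i\cdot(d/n)^{d-1}$ stays within the same bound. Squaring and summing over $i$ yields $\|\gamma\|_{l^2}\le(2d^2/n)\|q\|_{l^2}$, whence \eqref{eq:bn-b.bound} follows.

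The only non-routine step is the telescoping computation for $\gamma$ in \eqref{eq:bn-b.bound}; the derivative bound for the falling factorial and the product-ratio manipulation needed to absorb $(1-q_i)$ against $q_i^{d-1}$ are the main places where one has to be careful about constants. Everything else reduces to the first-difference trick together with elementary coordinatewise Lipschitz estimates.
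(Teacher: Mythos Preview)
Your proof is correct and follows essentially the same route as the paper. Both arguments reduce each inequality to a coordinatewise bound on $c^n_i(q):=\binom{nq_i}{d}/\binom{n}{d}$ (or $q_i^d$) and then pass to $l^2$ via the first-difference inequality $\|(\alpha_{i-1}-\alpha_i)_i\|_{l^2}\le 2\|\alpha\|_{l^2}$; your telescoping $\prod r_k-\prod s_k=\sum_j(\prod_{k<j}r_k)(r_j-s_j)(\prod_{k>j}s_k)$ with $r_k=q_i$, $s_k=(nq_i-k)/(n-k)$ is precisely the paper's telescoping $\sum_k\big[q_i^k\binom{nq_i-k}{d-k}/\binom{n-k}{d-k}-q_i^{k+1}\binom{nq_i-k-1}{d-k-1}/\binom{n-k-1}{d-k-1}\big]$ written in product form. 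The only cosmetic differences are that you obtain $|c^n_i(q)-c^n_i(\bar q)|\le dC_n|q_i-\bar q_i|$ via the mean-value bound $|f'(x)|\le n^{d-1}/(d-1)!$ on the falling factorial (the paper telescopes the product directly), and you explicitly isolate the regime $nq_i<d$ where the binomial vanishes by convention.
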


\begin{proof}
We first show
\eqref{eq:bn,blips}. 
Given $a\in\{0,\frac{1}{n},\dotsc,1\}$, we can write
\bee
\binom{na}{d} \Big/ \binom{n}{d}=a \left( a-\frac{1}{n} \right) \dotsm \left( a-\frac{d-1}{n} \right)\left( \frac{n}{n-1}\frac{n}{n-2}\dotsm \frac{n}{n-d+1} \right).
\eee 
Now fix $q,\bar q\in \Smchat^n$. 
For each $i \ge 1$, by letting $a=q_i, \bar q_i$ in the above identity respectively and then taking difference, we have that
\bee 
\begin{aligned}
&\left|\binom{nq_i}{d}\Big/ \binom{n}{d}-\binom{n\bar q_i}{d}\Big/ \binom{n}{d}\right|\\
&=\left|  q_{i} \left( q_{i}-\frac{1}{n} \right) \dotsm \left( q_{i}-\frac{d-1}{n} \right)  - \bar q_{i} \left( \bar q_{i}-\frac{1}{n} \right) \dotsm \left( \bar q_{i}-\frac{d-1}{n} \right) \right|  \left( \frac{n}{n-1}\dotsm \frac{n}{n-d+1} \right)\\
&\leq d|q_{i}-\bar q_{i}| \left( \frac{n}{n-1}\dotsm \frac{n}{n-d+1} \right).
\end{aligned}
\eee 
Therefore,
\bee
|R^n_i(q)-R^n_i(\bar q)|\leq d\left( \frac{n}{n-1}\dotsm \frac{n}{n-d+1} \right) \left(|q_{i-1}-\bar q_{i-1}|+|q_{i}-\bar q_{i}| \right).
\eee
Combining this with \eqref{eq:bn}, we have 
\bee
\|b^n(q)-b^n(\bar q) \|_{l^2}\leq \left(2\lambda d\left( \frac{n}{n-1}\dotsm \frac{n}{n-d+1} \right)+2\right)\|q-\bar q \|_{l^2}
\eee
where $\left( \frac{n}{n-1}\dotsm \frac{n}{n-d+1} \right)$ converges to $1$ as $n\to\infty$. 

\eqref{eq:b,blips} follows from a linear expansion on $b$ and we omit the details.

Finally we consider \eqref{eq:bn-b.bound}. 
For all $n\geq 2d$ and $q\in \Smchat^n$, note that
\begin{align*}
    \left| \binom{nq_{i}}{d} \Big/ \binom{n}{d}-q_{i}^d \right| & \le \sum_{k=0}^{d-1} \left| q_{i}^{k}\binom{nq_{i}-k}{d-k} \Big/ \binom{n-k}{d-k}-q_{i}^{k+1}\binom{nq_{i}-(k+1)}{d-(k+1)} \Big/ \binom{n-(k+1)}{d-(k+1)} \right| \\
    & \le \sum_{k=0}^{d-1}q_{i}^k\frac{k(1-q_{i})}{n-k} \leq \frac{2 d^2 q_i}{n}.
\end{align*}
Therefore
\be\label{eq:bn-b}
	|b^n_i(q)-b_i(q)|\leq \lambda \left| \binom{nq_{i-1}}{d} \Big/ \binom{n}{d}-q_{i-1}^d \right|+ \lambda \left| \binom{nq_{i}}{d} \Big/ \binom{n}{d}-q_{i}^d \right| \leq \frac{2\lambda d^2(q_{i-1}+q_i)}{n}.
\ee
This gives \eqref{eq:bn-b.bound}.
\end{proof}


With the help of Lemma \ref{lm:lips}, we have the following law of large numbers result.

\begin{Proposition}[LLN]\label{prop:LLN} 
\bi
\item[\emph{(a)}] For each $n$ and $Q^n(0)\in \Smchat^n$, there exists a measurable mapping $\bar \Gc^n: \M\to \D([0, T]: l^2)$ such that $Q^n=(Q^n(t))_{t\in [0,T]}=\bar\Gc^n(\frac{1}{n} N^n)$.
 
\item[\emph{(b)}] If $\|Q^n(0)-Q(0)\|_{l^2}\to 0$ with $Q^n(0) \in \Smchat^n$ and $Q(0)\in \Smchat$, then $Q^n$ converges in probability to $Q$ in $\D([0,T]: l^2)$, where 
the LLN limit of $Q^n$ is given by a deterministic limit $Q\in \C([0,T]: l^2)$, which is the unique solution to the following infinite dimensional system of ordinary differential equations
\begin{equation}
	\label{eq:ODE}
	Q(t) =Q(0)+\int_{\X_t} G(Q(s),y)dy\,ds=Q(0)+\int_{[0,t]} b(Q(s))ds.
\end{equation}
Moreover, 
\begin{equation}
    \label{eq:Q-bound}
    \| Q(t)\|_{l^2}^2 ds\leq 2\|Q(0)\|_{l^2}^2 e^{8 (\lambda d+1)^2 t}
\end{equation}
\ei 
\end{Proposition}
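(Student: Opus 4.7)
My plan is to establish parts (a) and (b) separately, using the Poisson-driven SDE representation \eqref{eq:JSQd.Qn} together with the estimates of Lemma~\ref{lm:lips}.

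For part (a), I would construct $Q^n$ pathwise from the atoms of $N^n$. For fixed $n$ and $q\in\Smchat^n$, each coordinate $G^n_i(q,\cdot)$ is supported on two disjoint intervals in $\X$ of Lebesgue measures $\lambda R^n_i(q)$ and $q_i-q_{i+1}$, and supports across different $i$ are pairwise disjoint; summing over $i$ yields $\int_\X\|G^n(q,y)\|_{l^2}^2\,dy=\lambda\sum_i R^n_i(q)+q_1\le\lambda+1$, where $\sum_i R^n_i(q)$ telescopes to $1$. Jump times can then be extracted iteratively as successive atoms of $N^n$ in the support of $G^n(Q^n(s-),\cdot)$, producing an explicit measurable map $\bar{\Gc}^n:\M\to\D([0,T]:l^2)$ with $Q^n=\bar{\Gc}^n(\tfrac{1}{n}N^n)$.

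For part (b), I first verify well-posedness of the limit ODE. By \eqref{eq:b,blips} and $b(0)=0$, $b$ is globally Lipschitz on $l^2$ with linear growth $\|b(q)\|_{l^2}\le 2(\lambda d+1)\|q\|_{l^2}$, so a Picard iteration yields a unique solution $Q\in\C([0,T]:l^2)$ to \eqref{eq:ODE}. The bound \eqref{eq:Q-bound} then follows from $(a+b)^2\le 2a^2+2b^2$ combined with Cauchy--Schwarz on $\int_0^t b(Q(s))\,ds$ and Gronwall's lemma applied to $\|Q(t)\|_{l^2}^2$.

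For the convergence statement I decompose
$$Q^n(t)-Q(t)=[Q^n(0)-Q(0)]+\int_0^t\bigl[b^n(Q^n(s))-b(Q(s))\bigr]\,ds+M^n(t),$$
with $M^n(t):=\tfrac1n\int_{\X_t}G^n(Q^n(s-),y)\,\Ntil^n(dy\,ds)$, and split $b^n(Q^n(s))-b(Q(s))=[b^n(Q^n(s))-b(Q^n(s))]+[b(Q^n(s))-b(Q(s))]$. The first bracket is controlled via \eqref{eq:bn-b.bound} by $\tfrac{4\lambda d^2}{n}\|Q^n(s)\|_{l^2}$, and the second via \eqref{eq:b,blips} by $2(\lambda d+1)\|Q^n(s)-Q(s)\|_{l^2}$. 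Since the supports $\{G^n_i(q,\cdot)\}_i$ are pairwise disjoint in $y$, $M^n$ is an $l^2$-valued square-integrable martingale with
$$\E\|M^n(t)\|_{l^2}^2 \;=\; \tfrac{1}{n}\,\E\int_0^t\!\!\int_\X\|G^n(Q^n(s),y)\|_{l^2}^2\,dy\,ds \;\le\; \tfrac{(\lambda+1)T}{n},$$
so Doob's $L^2$-inequality gives $\E\sup_{t\le T}\|M^n(t)\|_{l^2}^2\to 0$. A preliminary Gronwall estimate applied to $\E\|Q^n(t)\|_{l^2}^2$, using \eqref{eq:bn,blips} and the martingale bound, yields $\sup_n\sup_{t\le T}\E\|Q^n(t)\|_{l^2}^2<\infty$; a second Gronwall argument then produces $\E\sup_{t\le T}\|Q^n(t)-Q(t)\|_{l^2}^2\to 0$, which implies convergence in probability in $\D([0,T]:l^2)$.

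The main technical obstacle is the careful treatment of $M^n$ as a Hilbert-space-valued stochastic integral: one must verify that it is a genuine $l^2$-valued square-integrable martingale with the stated quadratic variation, so that Doob's inequality and the componentwise summation yielding the $l^2$-norm estimate are both legitimate in this infinite-dimensional setting. Once this bookkeeping is in place, the remainder of the proof follows the standard Picard/Gronwall pattern.
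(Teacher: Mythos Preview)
Your proposal is correct and aligns with the paper's approach: the paper only spells out the bound \eqref{eq:Q-bound}, deriving it exactly as you do via the Lipschitz estimate \eqref{eq:b,blips}, Minkowski/Cauchy--Schwarz, and Gronwall, and explicitly declares the remaining parts (the measurable representation in (a) and the convergence in (b)) to be ``standard'' and omits them. Your pathwise construction of $\bar\Gc^n$ and the martingale-plus-Gronwall argument for convergence are precisely the standard details the paper leaves unwritten, and your telescoping bound $\sum_i R^n_i(q)=1$ (yielding $\int_\X\|G^n(q,y)\|_{l^2}^2\,dy\le\lambda+1$) is in fact slightly sharper than the constant $\lambda\bigl(\tfrac{d^2+d}{2}\bigr)+1$ the paper later obtains in Lemma~\ref{lm:1}.
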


\begin{proof}
We only prove \eqref{eq:Q-bound}. The rest part is standard and we omit the proof. By the Lipschitz estimate in \eqref{eq:b,blips} and a combination of Minkowski's inequality and Cauchy-Schwarz inequality, we have that
$$
\|Q(t)\|^2_{l^2}\leq 2\|Q(0)\|_{l^2}^2+\int_{[0,t]} 8  (\lambda d+1)^2 \| Q(s)\|_{l^2}^2 ds. 
$$
The result then follows from Gronwall's inequality.
\end{proof}

\section{Moderate deviation principles}\label{sec:MDP}

 To analyze the MDP for the sequence $\{Q^n\}_{n\in\N}$, we begin by applying a Taylor expansion to the function $b: \Smchat \to l^2$ defined in \eqref{eq:b.G}, obtaining that, for any $q,\bar q\in \Smchat$,
 \be\label{eq:bq-bqbar.1} 
 \begin{aligned}
 	(b(\bar q)-b(q))_i=&\lambda d (q_{i-1})^{d-1}(\bar q_{i-1}-q_{i-1})-(\lambda d (q_i)^{d-1}+1)(\bar q_i-q_i)+ (\bar q_{i+1}-q_{i+1})\\
 	&+ \frac{1}{2}\lambda d (d-1) \left(  (\qtil_{i-1})^{d-2} |\bar q_{i-1}-q_{i-1}|^2 - (\qtil_i)^{d-2}|\bar q_{i}-q_{i}|^2 \right)
 \end{aligned}
 \ee 
 with $\qtil_{i-1}, \qtil_i$ between $\qbar_{i-1},q_{i-1}$ and $\qbar_{i},q_{i}$ respectively.
 Then we decompose
 \be\label{eq:Dbthetab} 
 b(\bar q)-b(q)=Db(q)[\bar q-q]+\theta_b(\bar q ,q) \quad \text{with } \theta_b(\bar q ,q):=b(\bar q)-b(q)-Db(q)[\bar q-q],
 \ee 
 where $Db: \Smchat \to L(l^2, l^2)$ represents the linear operation part in the first line of \eqref{eq:bq-bqbar.1}, and is defined by 
 \be\label{eq:Db} 
 Db(q)[p]:=\lambda d T_{-1}\left(q^{\circ d-1} \odot p  \right)- \left(\lambda d q^{\circ d-1}+1\right) \odot p + T_{+1}(p),
 \ee 
 here $(\cdot)^{\circ d-1}$ denotes the entry-wise power of $d-1$, $\odot$ denotes the entry-wise product of two vectors, and $T_{k}$ is the left-shift operator by $k$ entries for $k\in \Z$. 
It is straightforward to verify that for any $\psi\in L^2(\X_T)$, the following ODE has a unique solution in $\C([0,T]: l^2)$
\be\label{eq:Gc}  
{\color{black}\eta(t)=\int_0^t Db(Q(s)) \left[\eta(s) \right] ds+\int_{\X_t} G(Q(s),y)\psi(s,y)dy\, ds.}
\ee 
That is, the mapping 
\be\label{eq:Gc.def}   
\Gc:L^2(\X_t)\to \C([0,T]: l^2)\quad \text{by $\Gc(\psi)=\eta$ if $\eta$ solves \eqref{eq:Gc}},
\ee 
 is well-defined.
 
 Let $\{ a(n)\}_{n\in \N}$ be a sequence such that 
\be\label{eq:an} 
a(n)\sqrt{n}\to\infty \text{ and } a(n)\to 0+.
\ee 
The following theorem provides the MDP for the JSQ(d) system without buffer.
Proofs will be given in Section \ref{sec:proof}.
 
\begin{Theorem}[MDP without buffer]\label{thm:MDP}
Suppose 
\be\label{eq:Qn0-Q0.an}  
a(n)\sqrt{n} \|Q^n(0)-Q(0) \|_{l^2} \to 0
\ee 
with $Q^n(0) \in \Smchat^n$ and $Q(0)\in \Smchat$. Let $Q=(Q(t))_{t\in[0,T]}$ be the unique solution to \eqref{eq:ODE} with initial value $Q(0)\in \Smchat$. 
Then $\{a(n)\sqrt{n}(Q^n-Q)\}_{n}$ satisfies a LDP in $\D([0,T]: l^2)$ with speed $a^2(n)$ and rate function $I:\D([0,T]:l^2)\to [0,\infty]$ defined by 
\be\label{eq:rateI}  
I(\eta):= \inf_{\eta=\Gc(\psi)}\left\{   \frac{1}{2}\|  \psi \|_{L^2(\X_T)}\right\}, \quad \eta \in \C([0,T]:l^2),
\ee 
where
$\Gc$ is defined as in \eqref{eq:Gc.def}, and $I(\eta)=\infty$ if $\eta \in \D([0,T]:l^2) \setminus \C([0,T]:l^2)$.
\end{Theorem}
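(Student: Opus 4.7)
The plan is to follow the weak convergence approach to moderate deviations based on the variational representation for exponential functionals of Poisson random measures in the MDP regime, as developed in \cite{BudhirajaDupuisGanguly2015moderate,BudhirajaWu2017moderate}. The task is recast as verifying a Laplace principle for $Y^n := a(n)\sqrt{n}(Q^n-Q)$ through two sufficient conditions on controlled analogues of \eqref{eq:JSQd.Qn}, one delivering the upper bound and one the lower bound.

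First I would subtract \eqref{eq:ODE} from \eqref{eq:JSQd.Qn}, apply the decomposition \eqref{eq:Dbthetab}, and scale by $a(n)\sqrt{n}$ to obtain
\begin{equation*}
Y^n(t) = a(n)\sqrt{n}\bigl(Q^n(0)-Q(0)\bigr) + \int_0^t Db(Q(s))[Y^n(s)]\,ds + R^n(t) + M^n(t),
\end{equation*}
where $R^n$ collects $a(n)\sqrt{n}\int_0^t[b^n(Q^n(s))-b(Q^n(s))]\,ds$ together with $a(n)\sqrt{n}\int_0^t \theta_b(Q^n(s),Q(s))\,ds$, and
\begin{equation*}
M^n(t) := \frac{a(n)}{\sqrt{n}}\int_{\X_t} G^n(Q^n(s-),y)\,\tilde N^n(dy\,ds).
\end{equation*}
By \eqref{eq:bn-b.bound}, the first piece of $R^n$ is $O(a(n)/\sqrt{n})$; the second is quadratic in $\|Q^n-Q\|_{l^2}$ and hence formally of order $\|Y^n\|_{l^2}^2/(a(n)\sqrt{n})$, which is negligible once $Y^n$ is controlled in $l^2$ and $a(n)\sqrt{n}\to\infty$. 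Assumption \eqref{eq:Qn0-Q0.an} absorbs the initial-value term, and Proposition \ref{prop:LLN} provides the ambient LLN bounds needed to feed into a Gronwall argument driven by \eqref{eq:b,blips}.

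Next I would invoke the Poisson variational representation to write $-\frac{1}{a^2(n)}\log \E\exp(-a^2(n)F(Y^n))$ as an infimum over predictable controls $\varphi^n$ perturbing the intensity of $N^n$ from $n\,dy\,ds$ to $n\varphi^n\,dy\,ds$, with cost $\frac{1}{a^2(n)}\int_{\X_T}[\varphi^n\log\varphi^n-\varphi^n+1]n\,dy\,ds$. The MDP scaling is captured by the reparametrization $\varphi^n = 1 + \psi^n/(a(n)\sqrt{n})$ with suitable localization, under which the cost is asymptotic to $\frac{1}{2}\|\psi^n\|_{L^2(\X_T)}^2$ on controls of bounded cost, and the controlled process satisfies
\begin{equation*}
Y^{n,\psi^n}(t) \approx \int_0^t Db(Q(s))[Y^{n,\psi^n}(s)]\,ds + \int_{\X_t} G(Q(s),y)\psi^n(s,y)\,dy\,ds
\end{equation*}
with remainders vanishing in $l^2$. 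The Laplace principle then reduces to: (i) for any $\{\psi^n\}$ with $\sup_n \|\psi^n\|_{L^2(\X_T)}^2 \le M < \infty$, the pair $(\psi^n,Y^{n,\psi^n})$ is tight in $L^2(\X_T)_{\mathrm{weak}}\times \D([0,T]:l^2)$ and any limit point $(\psi,\eta)$ satisfies $\eta=\Gc(\psi)$ by the well-posedness of \eqref{eq:Gc}; (ii) given $\psi\in L^2(\X_T)$, the choice $\psi^n\equiv\psi$ yields $Y^{n,\psi^n}\to\Gc(\psi)$ in $\D([0,T]:l^2)$.

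The main obstacle is the infinite-dimensional state space. Tightness in $\D([0,T]:l^2)$ requires uniform-in-$n$ control of the tails $\sum_{i\ge N}|Y^{n,\psi^n}_i(t)|^2$, which cannot rely solely on the finite-dimensional Lipschitz constants of Lemma \ref{lm:lips}; one must exploit that birth at level $i$ is weighted by $R^n_i(q)\approx q_{i-1}^d-q_i^d$, so the $d$-th power forces rapid tail decay beyond the effective support of $Q$ even when the control $\psi^n$ attempts to excite high indices (the $L^2(\X_T)$ bound on $\psi^n$ then penalizes such excitations). Controlling the quadratic remainder $\theta_b(Q^n,Q)$ in $l^2$ under the perturbed dynamics is the heart of the matter; a truncation at level $N_\epsilon$ combined with moment bounds on $\|Y^{n,\psi^n}\|_{l^2}$ via Gronwall applied to the linearization $Db(Q(\cdot))$ should close the estimates, after which tightness in the Skorokhod topology and identification of limits proceed by standard arguments, and the variational formula \eqref{eq:rateI} emerges automatically from the infimum over $\psi\in L^2(\X_T)$ with $\eta=\Gc(\psi)$.
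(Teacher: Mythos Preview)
Your outline is correct and follows the same route as the paper: verify the sufficient condition of \cite{BudhirajaDupuisGanguly2015moderate} (Condition~\ref{con:MDP} here) via the decomposition $\bar Z^{n,\varphi^n}=A^n+M^{n,\varphi^n}+B^{n,\varphi^n}+C^{n,\varphi^n}$, linearize $B^{n,\varphi^n}$ through $Db$ and $\theta_b$, and bound $\|\bar Z^{n,\varphi^n}\|_{*,T}^2$ by Gronwall.

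One point of emphasis is misplaced, though. You identify tail control $\sum_{i\ge N}|Y^{n,\psi^n}_i|^2$ as the main obstacle and propose a truncation at level $N_\eps$. The paper never truncates in the spatial index: the Lipschitz estimates of Lemma~\ref{lm:lips} already hold in $l^2$, and the telescoping bound $\int_\X\|G^n(q,y)\|_{l^2}^k\,dy\le \lambda(d^2+d)/2+1$ (Lemma~\ref{lm:1}) is what makes the Gronwall loop close directly in $l^2$. The actual technical core---only alluded to in your phrase ``suitable localization''---is the cross term $\int_{\X_t}\bigl(G^n(\bar Q^{n,\varphi^n}(s),y)-G(Q(s),y)\bigr)\psi^n(s,y)\,dy\,ds$, where neither factor is individually small. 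The paper resolves this by truncating on the \emph{magnitude of the control}: split at $|\psi^n|>(a(n)\sqrt{n})^{1/2}$ (the large-control piece is handled by the entropy bound \eqref{eq:lmold.1} with $\gamma_1(\beta)\le 4/\beta$) and at $|\psi^n|\le(a(n)\sqrt{n})^{1/2}$ (the small-control piece uses $\|G^n(\bar q,\cdot)-G(q,\cdot)\|$ times the $L^\infty$ bound on $\psi^n$, Lemma~\ref{lm:Gnqbar-Gq}). Your plan would go through once you replace the proposed spatial truncation by this control-size truncation.
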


The following result gives an alternative representation for the rate function $I(\cdot)$ defined in \eqref{eq:rateI}, which is more intrinsic and will be applied in Section \ref{sec:case} for computing explicit formula of rate values in a case study. 
\begin{Theorem}\label{thm:I}
Let $Q=(Q(t))_{t\in[0,T]}$ be the unique solution to \eqref{eq:ODE} with initial value $Q(0)\in \Smchat$. 
For any $\eta\in \D([0,T]: l^2)$ with $I(\eta)<\infty$, we have
\be  
\begin{aligned}
 I(\eta)=&\sum_{j\geq 1} \int_{[0,T]} \one_{\left\{ \lambda(Q^d_{j-1}(t)-Q^d_j(t)) + \left( Q_j(t)-Q_{j+1}(t)\right) > 0 \right\}}\\
 &\qquad \qquad \cdot \frac{\left( \eta'_j(t)- \left[ d\lambda\left( Q_{j-1}^{d-1}(t)\eta_{j-1}(t)-Q_j^{d-1}(t)\eta_{j}(t)\right) - (\eta_j(t)-\eta_{j+1} (t)) \right] \right)^2}{\lambda\left( Q^d_{j-1}(t)-Q^d_j(t) \right)+Q_j(t)-Q_{j+1}(t)} dt.
\end{aligned}
\ee 
\end{Theorem}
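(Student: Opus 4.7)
My plan is to convert the variational definition of $I(\eta)$ into a family of pointwise-in-$t$, component-wise-in-$j$ convex quadratic minimizations, by exploiting that $G(Q(t),\cdot)$ is piecewise constant and supported on disjoint $y$-intervals. Since $I(\eta)<\infty$ there exists $\psi\in L^2(\X_T)$ with $\Gc(\psi)=\eta$. Differentiating \eqref{eq:Gc} in $t$ and projecting onto the $j$-th coordinate, while invoking the explicit form \eqref{eq:Db} of $Db$, yields that for a.e.\ $t\in[0,T]$ and every $j\ge 1$,
\begin{equation*}
c_j(t) := \eta'_j(t) - \left[ d\lambda\bigl(Q_{j-1}^{d-1}(t)\eta_{j-1}(t)-Q_j^{d-1}(t)\eta_{j}(t)\bigr) - \bigl(\eta_j(t)-\eta_{j+1}(t)\bigr) \right] = \int_{\X} G_j(Q(t),y)\,\psi(t,y)\,dy.
\end{equation*}

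By \eqref{eq:b.G}, $G_j(Q(t),\cdot)$ equals $+1$ on an interval $A_j^+(t)$ of length $\alpha_j^+(t):=\lambda\bigl(Q_{j-1}^d(t)-Q_j^d(t)\bigr)$, equals $-1$ on an interval $A_j^-(t)$ of length $\alpha_j^-(t):=Q_j(t)-Q_{j+1}(t)$, and is zero elsewhere. Crucially, because the base points $2(i-1)\lambda_0$ and $(2i-1)\lambda_0$ are spaced by $\lambda_0\ge\max(\lambda,1)$ while every interval has length at most $\lambda_0$, the family $\{A_j^\pm(t)\}_{j\ge 1}$ is pairwise disjoint in $\X$. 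Hence the constraint $\Gc(\psi)=\eta$ decouples into
\begin{equation*}
\int_{A_j^+(t)}\psi(t,y)\,dy-\int_{A_j^-(t)}\psi(t,y)\,dy=c_j(t),\qquad j\ge 1,\ \text{a.e.\ }t,
\end{equation*}
and the constraints for different $j$ act on disjoint $y$-regions.

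I would then minimize $\|\psi\|_{L^2(\X_T)}^2$ pointwise in $t$. Outside $\bigcup_j(A_j^+(t)\cup A_j^-(t))$ the optimizer is $\psi\equiv 0$. For each $(t,j)$ with $\alpha_j^+(t)+\alpha_j^-(t)>0$, Lagrange multipliers applied to the two-interval quadratic program give the unique solution $\psi=\mu$ on $A_j^+(t)$ and $\psi=-\mu$ on $A_j^-(t)$, with $\mu=c_j(t)/\bigl(\alpha_j^+(t)+\alpha_j^-(t)\bigr)$, yielding the minimum cost
\begin{equation*}
\frac{c_j(t)^2}{\alpha_j^+(t)+\alpha_j^-(t)}.
\end{equation*}
In the degenerate case $\alpha_j^+(t)+\alpha_j^-(t)=0$, feasibility forces $c_j(t)=0$, which is exactly encoded by the indicator in the theorem statement. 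Assembling these pointwise optimizers produces a measurable candidate $\psi^\star(t,y)$ with $\Gc(\psi^\star)=\eta$; summing over $j$ and integrating over $t$ gives the claimed formula (up to the normalization inherited from the definition of $I$).

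The main obstacle will be the technical verification steps: joint $(t,y)$-measurability of $\psi^\star$ (which follows since $Q$ is continuous and the interval endpoints are continuous functions of $t$), membership $\psi^\star\in L^2(\X_T)$ (which is exactly the finiteness of the explicit formula, yielding the matching upper bound), and justifying the interchange of infimum with the sum and integral (via Fubini/monotone convergence, leveraging the disjointness of supports so that each summand is optimized independently). The degenerate-denominator set requires a separate feasibility argument: on the measurable set where $\alpha_j^+(t)+\alpha_j^-(t)=0$ but $c_j(t)\ne 0$, no admissible $\psi$ can exist, so under the hypothesis $I(\eta)<\infty$ this set must have zero Lebesgue measure, validating the indicator in the formula.
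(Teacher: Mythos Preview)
Your proposal is correct and takes essentially the same route as the paper: exploit the disjointness of the supports of $G_j(Q(t),\cdot)$ to decouple the constraint across $j$, then observe that the optimal $\psi$ is constant on each of the two intervals $A_j^\pm(t)$ with value $\pm c_j(t)/(\alpha_j^+(t)+\alpha_j^-(t))$. The paper packages the optimality step slightly differently---rather than Lagrange multipliers and an interchange-of-infimum argument, it directly constructs the candidate $\hat\psi$ from an arbitrary admissible $\psi$ and uses Cauchy--Schwarz to show $\|\hat\psi\|_{L^2}\le\|\psi\|_{L^2}$, which sidesteps the measurable-selection and Fubini concerns you flag---but the underlying mechanism is identical.
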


\begin{proof}
Suppose $Q=(Q(t))_{t\in[0,T]}$ is the unique solution to \eqref{eq:ODE} with an initial value $Q(0)\in \Smchat$. Fix an arbitrary $\eta\in \D([0,T]: l^2)$ with $I(\eta)<\infty$.
Clearly $\eta'$ exists. 
Now consider $\Gc$ defined in \eqref{eq:Gc.def} and any $\psi$ with $\|\psi\|_{L^2(\X_T)}<\infty$ such that $\eta=\Gc(\psi)$. By the formula of $G(q,y)$ in \eqref{eq:b.G}  and operator $Db$ described in \eqref{eq:bq-bqbar.1} and \eqref{eq:Db}, we have
\begin{align}
\eta'(t) & = Db(Q(t)) \left[\eta(t) \right] +\int_{\X} G(Q(t),y)\psi(t,y)dy \label{eq:eta-prime}\\
& = \sum_{j\geq 1} \left[ d\lambda\left( Q_{j-1}^{d-1}(t)\eta_{j-1}(t)-Q_j^{d-1}(t)\eta_{j}(t)\right) - (\eta_j(t)-\eta_{j+1} (t)) + \phi_j(t)\right] \ebd_j, \notag
\end{align}
where, for each $j\geq 1$,
\begin{align*}
 &\phi_j(t) :=  \int_\X G_j(Q(t), y) \psi(t,y)  \\
 &=\int_{\X}    \bigg\{\one_{\big[ 2(j-1)\lambda_0, \,2(j-1)\lambda_0+\lambda\left( Q^d_{j-1}(t)-Q^d_j(t) \right) \big)}(y)
- \one_{\big[(2j-1)\lambda_0, (2j-1)\lambda_0+ \left( Q_{j}(t)-Q_{j+1}(t) \right)\big)}(y) \bigg\}\psi(t,y)dy.
\end{align*}
For the above arbitrary $\psi$, define $\hat\psi (t,y):= \sum_{j\geq 1}	{\hat\psi}_j (t,y)$ such that for each $j\geq 1$,
\begin{align*}
	{\hat\psi}_j(t,y)
	&:=\one_{\left\{ \lambda(Q^d_{j-1}(t)-Q^d_j(t)) +\left( Q_j(t)-Q_{j+1}(t)\right) > 0 \right\}} \\
	&\quad \cdot \frac{\phi_j(t)\left(\one_{\left\{ \big[2(j-1), 2(j-1)+\lambda(Q^d_{j-1}(t)-Q^d_j(t) \big) \right\}}(y)  - \one_{\left\{\big[2j-1, 2j-1 + Q_j(t)-Q_{j+1}(t) \big)\right\}}(y) \right)}{\lambda\left( Q^d_{j-1}(t)-Q^d_j(t) \right)+Q_j(t)-Q_{j+1}(t)}, t\in[0,T].
\end{align*}
Then 
\[
\int_\X G_j(Q(t),y)\hat \psi_j(t,y) dy = \phi_j(t)\quad \forall j\geq 1, t\in [0,T].
\]
Therefore, $\eta=\Gc(\hat\psi)$ and $I(\eta) \le \frac{1}{2} \|\hat\psi\|_{L^2(\X_T)}$. 
However, by Cauchy-Schwarz inequality, $\|\hat\psi\|_{L^2(\X_T)}\leq \|\psi\|_{L^2(\X_T)}$. 
Therefore, we have $I(\eta) = \frac{1}{2} \|\hat\psi\|_{L^2(\X_T)}$.
Now note that 
\begin{equation*}
    \|\hat\psi\|_{L^2(\X_T)} = \sum_{j \ge 1} \int_{[0,T]} \frac{\one_{\left\{ \lambda(Q^d_{j-1}(t)-Q^d_j(t)) +\left( Q_j(t)-Q_{j+1}(t)\right) > 0 \right\}} \phi_j^2(t)}{\lambda\left( Q^d_{j-1}(t)-Q^d_j(t) \right)+Q_j(t)-Q_{j+1}(t)}dt
\end{equation*}
and from \eqref{eq:eta-prime} we have
\begin{equation*}
    \phi_j(t) = \eta'_j(t)- \left[ d\lambda\left( Q_{j-1}^{d-1}(t)\eta_{j-1}(t)-Q_j^{d-1}(t)\eta_{j}(t)\right) - (\eta_j(t)-\eta_{j+1} (t)) \right].
\end{equation*}
The result follows.
\end{proof}

Using the above MDP for $Q^n$ and a contraction mapping principle with the continuous and injective map $\D([0,T]: l^2) \ni (\nu_i)_{i=0}^\infty \mapsto (\nu_i-\nu_{i+1})_{i=0}^\infty \in \D([0,T]: l^2)$, we immediately get the MDP for $\mu^n$.

\begin{Corollary}
    \label{cor:mu-MDP}
    Under the conditions of Theorem \ref{thm:MDP},
    $\{a(n)\sqrt{n}(\mu^n-\mu)\}_n$ satisfies a LDP with speed $a^2(n)$ and rate function defined by 
    $$\Itil(\etatil) := \begin{cases}
        I(\eta), & \mbox{if } \eta_i = \sum_{j=i}^\infty \etatil_j \mbox{ is such that } \eta \in \C([0,T]:l^2), \\
        \infty, & \mbox{otherwise}.
    \end{cases}$$
    Here $\mu_j = Q_j-Q_{j+1}$ for $j \ge 0$.
\end{Corollary}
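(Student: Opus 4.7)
The plan is to invoke the standard contraction principle for large deviations, applied to the bounded linear map $\Phi: \D([0,T]:l^2) \to \D([0,T]:l^2)$ defined pointwise in time by $(\Phi \nu)_i(t) := \nu_i(t) - \nu_{i+1}(t)$ for $i \geq 0$. The first task is to verify that $\Phi$ is continuous and injective. For boundedness on $l^2$, I would use $(a-b)^2 \le 2a^2+2b^2$ to get $\|\Phi v\|_{l^2}^2 = \sum_{i\ge 0} (v_i - v_{i+1})^2 \leq 4\|v\|_{l^2}^2$; together with the pointwise-in-time action (and the fact that no jump times are reshuffled), this gives continuity in both the uniform topology on $\C([0,T]:l^2)$ and the Skorokhod topology on $\D([0,T]:l^2)$. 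Injectivity follows at once: $\Phi \nu \equiv 0$ forces $\nu_i(t)=\nu_{i+1}(t)$ for every $i,t$, and the only constant sequence belonging to $l^2$ is zero.

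Next I would record the key algebraic identity linking the two centered processes: since $\mu^n_j = Q^n_j - Q^n_{j+1}$ and $\mu_j = Q_j - Q_{j+1}$, linearity of $\Phi$ yields
\begin{equation*}
a(n)\sqrt{n}(\mu^n - \mu) = \Phi\bigl(a(n)\sqrt{n}(Q^n - Q)\bigr).
\end{equation*}
The hypothesis $a(n)\sqrt n\,\|Q^n(0)-Q(0)\|_{l^2}\to 0$ is in force by assumption, so Theorem \ref{thm:MDP} supplies an LDP in $\D([0,T]:l^2)$ for $a(n)\sqrt{n}(Q^n - Q)$ with speed $a^2(n)$ and rate function $I$. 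Composing with $\Phi$ and invoking the contraction principle immediately delivers an LDP in $\D([0,T]:l^2)$ for $a(n)\sqrt{n}(\mu^n - \mu)$, with speed $a^2(n)$ and rate function $\Itil(\etatil) := \inf\{ I(\eta) : \eta\in\D([0,T]:l^2),\ \Phi(\eta)=\etatil\}$.

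Finally, the infimum can be simplified because $\Phi$ is injective: a preimage $\eta$ exists iff the partial sums $\eta_i(t)=\sum_{j\ge i}\etatil_j(t)$ converge and define a path in $\D([0,T]:l^2)$, and it is then unique. Since $I=+\infty$ off of $\C([0,T]:l^2)$, we may restrict to continuous preimages, which yields precisely the piecewise formula for $\Itil$ in the statement. I do not anticipate a substantive obstacle here; the only minor bookkeeping is the verification that $\Phi$ respects the Skorokhod topology, which is routine for a bounded linear operator that acts pointwise in time and thus preserves càdlàg regularity and jump locations.
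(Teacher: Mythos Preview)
Your proposal is correct and follows exactly the paper's own argument: the paper states the corollary as an immediate consequence of Theorem~\ref{thm:MDP} via the contraction principle applied to the continuous injective map $\D([0,T]:l^2)\ni(\nu_i)_{i=0}^\infty\mapsto(\nu_i-\nu_{i+1})_{i=0}^\infty$, and your write-up simply fills in the routine verifications (boundedness, Skorokhod continuity, injectivity, and identification of the unique preimage via partial sums) that the paper leaves implicit.
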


\begin{Remark}
    \label{rmk:exp-equiv}
    Without using Theorem \ref{thm:MDP}, one can still obtain Corollary \ref{cor:mu-MDP} via an exponential equivalence argument combined with the MDP result in \cite{BudhirajaWu2017moderate}.
    This short alternative proof will be given in Appendix \ref{sec:exp-equiv}.
    However, since the map $\D([0,T]: l^2) \ni (\nu_i)_{i=0}^\infty \mapsto (\sum_{j=i}^\infty\nu_j)_{i=0}^\infty \in \D([0,T]: l^2)$ is not continuous, Theorem \ref{thm:MDP} cannot be directly obtained by applying the contraction principle to Corollary \ref{cor:mu-MDP}.
\end{Remark}

\section{Proof of Theorem \ref{thm:MDP}}\label{sec:proof}

The basic idea is to verify a sufficient condition for MDP obtained in \cite{BudhirajaDupuisGanguly2015moderate}. 
We begin by adapting a few more notations in \cite[pages 756-757]{BudhirajaWu2017moderate}.
 For $r\in \R_+$, define $\ell: \R_+\to \R_+$ by 
$$
\ell(r):= r\ln (r)-r+1.
$$
Let $\Pc$ be the $\{ \Fc_t\}$-predictable $\sigma$-field on $[0,T]\times\Omega$, and denote by $\Ac_+$ (resp. $\Ac$) the family of all mappings $(\X_T\times\Omega,\Bc(\X) \times \Pc)\to (\R_+, \Bc(\R_+))$ (resp. $(\X_T\times\Omega,\Bc(\X) \times \Pc)\to (\R, \Bc(\R))$). For any $\varphi\in \Ac_+$, define a counting process $N^\varphi$ on $\X_T$ by 
\be\label{eq:def.Nvarphi} 
N^\varphi(A\times [0,t]):= \int_{[0,\infty)\times A\times[0,t]} \one_{[0,\varphi(s,y)]}(r) N(dr\, dy\, ds), \quad \forall t\in [0,T], A\in \Bc(\X).
\ee 
For any Borel measurable $g: \X_T\to \R_+$, define $L_t(g):= \int_{\X_t} \ell(g(s,y)) dy\, ds$, for $t\in[0,T]$. Then $L_t(\varphi)$ is a $[0,\infty]$-valued random variable. 
For each $m\in \N$, define 
\begin{align*}
	\Ac_{b,m}:=\bigg\{  &\varphi\in \Ac: \text{ for all $(\omega,t)\in \Omega\times[0,T]$, $ \frac{1}{m}\leq \varphi(\omega, t, y)\leq m$ if $y\in[0,m]$ }\\
	& \text{ and $\varphi(\omega, t, y) =1$ if $y\notin [0,m]$} \bigg\},
\end{align*}
and let $\Ac_b:= \cup_{m\in \N} \Ac_{b,m}$. Fir each $n\in \N$ and $M\in (0,\infty)$, define the following spaces
\begin{align}
	\Smc_{+, n}^M :=& \left\{   g: \X_T\to \R_+: L_T(g)\leq \frac{M}{a^2(n)n}  \right\}, \notag\\
	\Smc_{n}^M :=& \left\{   f: \X_T\to \R: 1+\frac{1}{a(n)\sqrt{n}} f\in \Smc^M_{+,n} \right\}, \notag\\
	\Uc_{+,n}^M:=& \left\{  \varphi\in \Ac_b: \varphi(\omega, \cdot, \cdot)\in \Smc_{+,n}^M, \P-a.s. \right\}. \notag
\end{align}

Given $M\in(0,\infty)$, denote by $B(M)$ the ball of radius $M$ in $L^2(\X_T)$. A set $\{ \psi^n\}_n\subset \Ac$ with the property that $\sup_{n}\|\psi^n\|_{L^2(\X_T)}\le M$ a.s. for some $M\in (0,\infty)$ will be regarded as a collection of $B(M)$-valued random variables, where $B(M)$ is equipped with the weak topology on the Hilbert space $L_2(\X_T)$. 
Throughout this section, we denote by $B(M)$ the compact metric space obtained by equipping it with the weak topology on $L_2(\X_T)$, and we apply the symbol ``$\Rightarrow$" for weak convergence.

By Lemma \ref{lm:estimate} (b) below, if $g\in \Smc_{+,n}^M$, then with $f:=a(n)\sqrt{n}(g-1)$, $f\one_{\{ |f|\leq a(n)\sqrt{n}\}}\in B\left( \sqrt{M \gamma_2(1)} \right)$, where $\gamma_2(1)\in(0, \infty)$ is defined in Lemma \ref{lm:estimate} (b). 

Proposition \ref{prop:LLN} (a) tells that, for each $n$, there exists a measurable mapping $ \Gc^n: \M\to \D([0, T]: l^2)$ such that $a(n)\sqrt{n}(Q^n-Q)=\Gc^n(\frac{1}{n} N^n)$. Also recall the mapping $\Gc:L^2(\X_T)\to \C([0,T]:l^2)$ defined in \eqref{eq:Gc.def}. Then by \cite[Theorem 2.3]{BudhirajaDupuisGanguly2015moderate}, 
the MDP in Theorem \ref{thm:MDP} holds if the following Condition \ref{con:MDP} on $\{\Gc^n\}_{n\in\N}$ and $\Gc$ is satisfied.
\begin{Condition}\label{con:MDP}
\emph{(a)} Given $M\in (0,\infty)$, suppose that $g^n, g\in B(M)$ and $g^n\to g$. Then
$$
\Gc(g^n)\to \Gc(g)\text{ in $\D([0,T]:l^2)$ }. 
$$
\emph{(b)} Given $M\in (0,\infty)$, let $(\varphi^n)_{n}$ be a sequence such that, for each $n$, $\varphi^n\in \Uc^M_{+,n}$ 
and for some $\beta\in(0,1]$, 
$$
\psi^n \one_{\{|\psi^n |\leq \beta a(n)\sqrt{n}\}} \Rightarrow \psi \text{ in $B\left(\sqrt{M \gamma_2(1)} \right)$ for some $\psi$, where $\psi^n:= a(n)\sqrt{n}(\varphi^n-1)$}.
$$
Then
$
\Gc^n\left(  \frac{1}{n} N^{n\varphi^n}\right) \Rightarrow \Gc(\psi).
$
\end{Condition}

In the rest part of this section, we shall verify (a) and (b) in Condition \ref{con:MDP} separately. 
We shall follow the idea carried in \cite[Section 5]{BudhirajaWu2017moderate}. However, non-trivial efforts have to be made toward the sequence of functionals $G^n$ and $b^n$ which are specific for our model. 
Recall $Q(t)$ is the unique solution to \eqref{eq:ODE} with initial value $Q(0)$.

\subsection{Verification of Condition \ref{con:MDP} (a)}\label{sec:proof.a}
Recall \eqref{eq:Gn} and \eqref{eq:b.G}. We have the following estimates.

\begin{Lemma}\label{lm:1}
For all $k\in \N$, 
$$\sup_{n\geq d} \int_\X \| G^n(q,y)\|_{l^2}^k dy\leq \lambda \left( \frac{d^2+d}{2} \right)+1 , \forall q\in \Smchat^n, \quad \int_\X \| G(q,y)\|_{l^2}^k dy \leq \lambda +1, \forall q\in \Smchat.$$
\end{Lemma}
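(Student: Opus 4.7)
The plan is to exploit the disjoint-interval structure inside $G^n$ and $G$ to reduce the $l^2$-norm computation to an elementary length calculation, then close by telescoping.

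First I would observe that, for each fixed $q$, the intervals appearing inside the indicators in \eqref{eq:Gn} (and correspondingly in \eqref{eq:b.G}) are pairwise disjoint across $i$. Indeed, for each $i$ both the arrival piece $[2(i-1)\lambda_0,\, 2(i-1)\lambda_0 + \lambda R^n_i(q))$ and the departure piece $[(2i-1)\lambda_0,\, (2i-1)\lambda_0 + (q_i-q_{i+1}))$ lie inside the block $[2(i-1)\lambda_0, 2i\lambda_0)$, and the blocks for different $i$ are disjoint. Within a single block, the arrival piece has length at most $\lambda R^n_i(q) \le \lambda \le \lambda_0$, and the departure piece has length at most $q_i-q_{i+1} \le 1 \le \lambda_0$, so the two sub-intervals also do not overlap because the second begins at $(2i-1)\lambda_0$. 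The same argument applies to $G$ with $\lambda R^n_i(q)$ replaced by $\lambda(q_{i-1}^d-q_i^d)$. Note also that on $\Smchat^n$ (resp.\ $\Smchat$) these lengths are nonnegative because $q_{i-1}\ge q_i$.

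Consequently, for each $y$ at most one $G^n_i(q,y)$ is nonzero, and when nonzero it equals $\pm 1$. Hence $\|G^n(q,y)\|_{l^2}\in\{0,1\}$, and for every $k\ge 1$
\begin{equation*}
\|G^n(q,y)\|_{l^2}^k = \|G^n(q,y)\|_{l^2} = \sum_{i\ge 1}\bigl[\one_{[2(i-1)\lambda_0,\,2(i-1)\lambda_0+\lambda R^n_i(q))}(y) + \one_{[(2i-1)\lambda_0,\,(2i-1)\lambda_0+(q_i-q_{i+1}))}(y)\bigr].
\end{equation*}
Integrating in $y$ and using Tonelli gives
\begin{equation*}
\int_\X \|G^n(q,y)\|_{l^2}^k\,dy \;=\; \sum_{i\ge 1}\Bigl[\lambda R^n_i(q) + (q_i-q_{i+1})\Bigr],
\end{equation*}
and the analogous identity for $G$ with $R^n_i(q)$ replaced by $q_{i-1}^d - q_i^d$.

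I would finish by telescoping. Since $q_0=1$ and $q_i\to 0$ (as $q\in l^2$), $\sum_{i\ge 1}(q_i-q_{i+1}) = q_1 \le 1$. For the arrival term, telescoping yields $\sum_{i\ge 1} R^n_i(q) = \binom{n}{d}/\binom{n}{d} - 0 = 1$ in the prelimit case, and $\sum_{i\ge 1}(q_{i-1}^d - q_i^d) = 1$ in the limit case. Thus the integral is bounded by $\lambda + 1$ in both cases, which is stronger than the stated bounds $\lambda(d^2+d)/2 + 1$ and $\lambda + 1$ respectively. I do not expect a genuine obstacle: the whole argument is essentially a bookkeeping of disjoint interval lengths, and the role of the hypothesis $n\ge d$ is only to ensure $\binom{n}{d}>0$ so that $R^n_i(q)$ is well defined.
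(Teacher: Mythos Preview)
Your proof is correct and in fact sharper than the paper's. Both arguments begin identically: the disjoint-interval observation yields $\|G^n(q,y)\|_{l^2}\in\{0,1\}$, whence for $k\ge 1$ the integral equals $\sum_{i\ge 1}[\lambda R^n_i(q)+(q_i-q_{i+1})]$. The paper then bounds each $R^n_i(q)$ individually via a product expansion, obtaining $R^n_i(q)\le \frac{d^2+d}{2}(q_{i-1}-q_i)$ for $i$ below a certain cutoff, which after summing gives the constant $\lambda\frac{d^2+d}{2}+1$. You instead telescope the defining expression $R^n_i(q)=\bigl[\binom{nq_{i-1}}{d}-\binom{nq_i}{d}\bigr]/\binom{n}{d}$ directly to get $\sum_{i\ge 1}R^n_i(q)=1$, yielding the uniform bound $\lambda+1$ for $G^n$ as well as $G$. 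Your route is shorter, avoids the casework on the cutoff index, and recovers the same constant in the prelimit as in the limit; the paper's individual bound on $R^n_i$ is not needed here (and is not used elsewhere in the paper either).
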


\begin{proof}
Given $q\in \Smchat^n$, let $K:=\sup\{i\in \N: nq_i\geq d\}\in \N\cup\{\infty\}$. Then
\begin{align*}
&0\leq R^n_i(q)=\prod_{k=0}^{d-1} \frac{nq_{i-1}-k}{n-k}-\prod_{k=0}^{d-1} \frac{nq_{i}-k}{n-k}\leq \sum_{k=0}^{d-1}\frac{n(q_{i-1}-q_i)}{n-k}\\
&\qquad \qquad \; \leq \left( \sum_{k=0}^{d-1}\frac{n}{n-k} \right) (q_{i-1}-q_i)\leq\left( \sum_{k=1}^{d} k \right) (q_{i-1}-q_i)\leq  \frac{d^2+d}{2}(q_{i-1}-q_i)\quad \forall i\leq K,\\
&0\leq R^n_{K+1}(q) = \prod_{k=0}^{d-1} \frac{nq_{K}-k}{n-k}\leq q_K,\\
&0\leq R^n_{i+1}(q) = 0\quad \forall i>K.
\end{align*}
Then
\begin{align*}
&\int_\X \| G^n(q,y)\|_{l^2}^k dy\\
&=\int_\X \left( \sum_{i\geq 1}\left(  \one_{[2(i-1)\lambda_0, 2(i-1)\lambda_0+\lambda R_i^n(q) )}(y)  + \one_{[(2i-1)\lambda_0, (2i-1)\lambda_0+(q_{i}-q_{i+1}))}(y)  \right) \right) dy\\
&=\sum_{i\geq 1} [\lambda R^n_i(q)+(q_i-q_{i+1})]
\leq  \lambda  \left( \frac{d^2+d}{2} \right)  \sum_{1\leq i\leq K} (q_{i-1}-q_i) + \lambda q_K + \sum_{i\geq1} (q_i-q_{i+1})\\
&\le \lambda \left( \frac{d^2+d}{2} \right)+1.
 \end{align*}
 A similar argument gives the upper bound for $G$.
\end{proof}


\begin{Lemma}\label{lm:3}
Fix $M\in (0,\infty)$. 
Suppose $g^n, g\in B(M)$ and $g^n\to g$. Then
\begin{align*}
\int_{[0,\cdot]\times \X} g^n(s,y)G(Q(s),y)dy\, ds \to \int_{[0,\cdot]\times \X} g(s,y)G(Q(s),y)dy\, ds\quad \text{in }\C([0,T]: l^2).
\end{align*}
\end{Lemma}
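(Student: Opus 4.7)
The plan is to prove convergence in $\C([0,T]:l^2)$ by combining a uniform $l^2$-tail bound on the high-index coordinates with coordinatewise uniform-in-$t$ convergence of each leading coordinate. Let $F^n(t) := \int_{[0,t]\times \X} g^n(s,y) G(Q(s),y)\, dy\, ds$ and define $F(t)$ analogously with $g$ in place of $g^n$. The goal is to show $\sup_{t\in[0,T]} \|F^n(t)-F(t)\|_{l^2} \to 0$.

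First I would establish a uniform tail bound. Because the two indicator intervals defining $G_j(q,\cdot)$ are disjoint, $\int_\X G_j(Q(s),y)^2\, dy = \lambda(Q_{j-1}^d(s) - Q_j^d(s)) + (Q_j(s) - Q_{j+1}(s))$, and this sums over $j \geq 1$ to $\lambda + Q_1(s) \leq \lambda + 1$, in agreement with Lemma \ref{lm:1}. By monotone convergence, for any $\varepsilon > 0$ one can choose $N = N(\varepsilon)$ with $\sum_{j > N} \|G_j(Q(\cdot),\cdot)\|_{L^2(\X_T)}^2 < \varepsilon$. Applying Cauchy-Schwarz componentwise and using $\|g^n - g\|_{L^2(\X_T)} \leq 2M$,
\begin{align*}
\sum_{j > N} \bigl| [F^n(t)-F(t)]_j \bigr|^2 \leq \|g^n - g\|_{L^2(\X_T)}^2 \sum_{j > N} \|G_j(Q(\cdot),\cdot)\|_{L^2(\X_T)}^2 \leq 4M^2 \varepsilon
\end{align*}
uniformly in $n$ and in $t \in [0,T]$.

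Second, for each of the finitely many coordinates $j \leq N$, I would show $[F^n(\cdot)]_j \to [F(\cdot)]_j$ uniformly on $[0,T]$. Pointwise-in-$t$ convergence follows directly from the weak convergence $g^n \to g$ in $L^2(\X_T)$ applied to the test function $(s,y) \mapsto G_j(Q(s),y)\one_{[0,t]}(s)$, which lies in $L^2(\X_T)$ by the bound above. To upgrade this to uniform-in-$t$ convergence, I would verify that the family $\{[F^n(\cdot)]_j\}_n$ is equicontinuous: another application of Cauchy-Schwarz yields
\begin{align*}
\bigl| [F^n(t)]_j - [F^n(s)]_j \bigr| \leq M \left( \int_s^t \int_\X G_j(Q(u),y)^2\, dy\, du \right)^{1/2} \leq M\sqrt{(\lambda+1)(t-s)}.
\end{align*}
Pointwise convergence combined with this uniform-in-$n$ modulus of continuity then gives uniform convergence on $[0,T]$ via a standard Arzel\`a-Ascoli argument.

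Combining the two steps, $\limsup_n \sup_{t\in[0,T]} \|F^n(t) - F(t)\|_{l^2}^2 \leq 4M^2 \varepsilon$, and letting $\varepsilon \to 0$ completes the proof. The main subtle point is that weak $L^2$-convergence alone produces neither uniformity in $t$ nor in the $l^2$-index; the coupling of the telescoping identity from Lemma \ref{lm:1} (yielding tail control) with the coordinatewise equicontinuity estimate is exactly what lifts weak-pointwise convergence to convergence in $\C([0,T]:l^2)$. No further substantial obstacle is anticipated.
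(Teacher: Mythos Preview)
Your proposal is correct and follows essentially the same approach as the paper. Both arguments rest on the same two ingredients: the square-summability $\sum_j \|G_j(Q(\cdot),\cdot)\|_{L^2(\X_T)}^2 \le (\lambda+1)T$ from Lemma \ref{lm:1}, and an equicontinuity estimate from Cauchy--Schwarz. The only cosmetic difference is packaging: the paper first invokes the Dominated Convergence Theorem (with dominant $M\|G_i(Q(\cdot),\cdot)\|_{L^2(\X_T)}$) to pass from coordinatewise pointwise convergence to $l^2$-convergence at each fixed $t$, and then establishes equicontinuity directly in the $l^2$ norm to upgrade to uniform-in-$t$ convergence; you instead split off the tail $j>N$ first and then argue coordinatewise equicontinuity plus Arzel\`a--Ascoli on the finitely many leading coordinates, which is just the DCT argument written out by hand.
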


\begin{proof}
By Lemma \ref{lm:1}, $(s,y) \mapsto G_i(Q(s),y)$ is in $L^2(\X_T)$ for each $i \ge 1$.
Since $g^n \to g$ in $B(M)$, we have
$$\int_{[0,t]\times \X} g^n(s,y)G_i(Q(s),y)dy\, ds \to \int_{[0,t]\times \X} g(s,y)G_i(Q(s),y)dy\, ds$$
for each $t \in [0,T]$ and $i \ge 1$.
By Cauchy-Schwarz inequality,
$$
\left| \int_{\X_t} g^n(s,y) G_i(Q(s), y) dy\, ds \right|\leq M \left( \int_{\X_t} \left(G_i(Q(s), y) \right)^2 dy\, ds \right)^{1/2}
$$
and the right hand side is square-summable by Lemma \ref{lm:1}.
It then follows from Dominated Convergence Theorem that
\begin{equation}
    \label{eq:gn-g-t}
    \int_{[0,t]\times \X} g^n(s,y)G(Q(s),y)dy\, ds \to \int_{[0,t]\times \X} g(s,y)G(Q(s),y)dy\, ds, \quad \forall t \in [0,T].
\end{equation}
Moreover, by Cauchy-Schwarz inequality and Lemma \ref{lm:1}, we have 
\begin{align*}
& \left\|   \int_{\X \times [s,t]} g^n(r,y)G(Q(r),y)dy\, dr \right\|_{l^2}^2\leq  M^2   \int_{\X \times [s,t]}\left\| G(Q(r),y)\right\|_{l^2}^2dy\, dr\\
& \leq  \left( \lambda+1\right)M^2 |t-s|,\quad \forall\, 0\leq s\leq t\leq T.
\end{align*}
This gives equicontinuity and hence implies the uniform convergence of \eqref{eq:gn-g-t} in $t \in [0,T]$.
\end{proof}

Now we verify Condition \ref{con:MDP} (a).

\begin{Proposition}
    Given $M\in (0,\infty)$, suppose that $g^n, g\in B(M)$ and $g^n\to g$. Then
    $$
    \Gc(g^n)\to \Gc(g)\text{ in $\D([0,T]:l^2)$ }. 
    $$
\end{Proposition}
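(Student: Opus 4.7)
The plan is to write down the integral equation satisfied by the difference $\eta^n - \eta := \Gc(g^n) - \Gc(g)$, separate a ``forcing term'' that converges to zero uniformly by Lemma \ref{lm:3}, and then close the estimate by Gronwall's inequality using a uniform operator norm bound on $Db(Q(s))$. Since $\Gc$ takes values in $\C([0,T]:l^2)$, convergence in the uniform topology implies convergence in the Skorokhod topology on $\D([0,T]:l^2)$, so it suffices to show $\eta^n \to \eta$ in $\C([0,T]:l^2)$.

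First I would write, from \eqref{eq:Gc},
\begin{equation*}
\eta^n(t)-\eta(t) = \int_0^t Db(Q(s))\bigl[\eta^n(s)-\eta(s)\bigr] ds + \xi^n(t),
\end{equation*}
where
\begin{equation*}
\xi^n(t) := \int_{\X_t} G(Q(s),y)\bigl(g^n(s,y)-g(s,y)\bigr) dy\, ds.
\end{equation*}
By Lemma \ref{lm:3}, $\xi^n \to 0$ in $\C([0,T]: l^2)$, so in particular $\sup_{t \in [0,T]} \|\xi^n(t)\|_{l^2} \to 0$.

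Next, I would establish a uniform bound on the operator norm of $Db(Q(s))$ on $l^2$. From the explicit form \eqref{eq:Db}, using that $Q(s) \in \Smchat$ implies $\|Q^{\circ d-1}(s)\|_{\ell^\infty}\le 1$ and that the shift operators $T_{\pm 1}$ are isometries on $l^2$, one gets
\begin{equation*}
\|Db(Q(s))[p]\|_{l^2} \le \lambda d \|p\|_{l^2} + (\lambda d + 1)\|p\|_{l^2} + \|p\|_{l^2} = (2\lambda d + 2)\|p\|_{l^2}
\end{equation*}
for every $s \in [0,T]$ and $p \in l^2$. Combining this with the integral identity above yields
\begin{equation*}
\|\eta^n(t)-\eta(t)\|_{l^2} \le \sup_{r \in [0,T]} \|\xi^n(r)\|_{l^2} + (2\lambda d + 2) \int_0^t \|\eta^n(s)-\eta(s)\|_{l^2} ds.
\end{equation*}
Gronwall's inequality then gives
\begin{equation*}
\sup_{t\in[0,T]}\|\eta^n(t)-\eta(t)\|_{l^2} \le \sup_{r \in [0,T]} \|\xi^n(r)\|_{l^2}\cdot e^{(2\lambda d + 2)T} \to 0,
\end{equation*}
which is the desired uniform convergence, and hence $\Gc(g^n)\to\Gc(g)$ in $\D([0,T]:l^2)$.

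The only step with any real content is the uniform convergence of the forcing term $\xi^n$, which is precisely Lemma \ref{lm:3}; the rest is a deterministic Gronwall argument relying on the linearity and uniform boundedness of $Db(Q(s))$ given by \eqref{eq:Db}. So I do not anticipate a genuine obstacle here, beyond being careful that the bounds on $Db$ use only $\|Q(s)\|_{\ell^\infty}\le 1$ and not $\|Q(s)\|_{l^2}$, which is not uniformly small.
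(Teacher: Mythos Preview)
Your proof is correct and follows essentially the same approach as the paper: write the difference $\Gc(g^n)-\Gc(g)$ as a linear integral equation with forcing term $\int_{\X_t}(g^n-g)G\,dy\,ds$, invoke Lemma~\ref{lm:3} for the forcing term, the operator bound $\|Db(Q(s))\|\le 2\lambda d+2$ (which is Lemma~\ref{lm:estimate}(a)), and close with Gronwall. The paper's version is just a one-line sketch of exactly this argument.
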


\begin{proof}
    By \eqref{eq:Gc} and \eqref{eq:Gc.def},
    $$
    \Gc(g^n)(t)-\Gc(g)(t) = \int_{[0,t]}Db(Q(s)) \left[ \Gc(g^n)(s)-\Gc(g)(s)\right]ds +\int_{\X_t}\left[ g^n(s,y)- g(s,y)\right]G(s,y)dy\,ds.
    $$
    The result then follows from Lemma \ref{lm:3}, Lemma \ref{lm:estimate} (a) and the Gronwall's inequality.
\end{proof}

\subsection{Verification of Condition \ref{con:MDP} (b)}\label{sec:proof.b}

Recall $b^n, b, Db, \theta_b$ defined in \eqref{eq:bn}, \eqref{eq:b.G}, \eqref{eq:Dbthetab} and \eqref{eq:Db}. To begin with, we provide the following estimates.

\begin{Lemma}\label{lm:estimate}
\emph{(a)}
\begin{align}
&\|Db(q)\|_{L(l^2,l^2)}\leq 2\lambda d+2,\quad  \|\theta_b(\qbar,q)\|_{l^2} \leq \lambda d(d-1) \|\qbar - q\|_{l^2}^2, \quad \forall q,\qbar\in \Smchat. \label{eq:Dbthetab.estimate}  
\end{align}
	\emph{(b)} Suppose $g\in \Smc_{+, n}^M$ for some $M\in (0,\infty)$. Let $f:= a(n)\sqrt{n}(g-1)\in \Smc_n^M$. Then
\begin{align}
	&\int_{\X_T} |f| \one_{\{ |f|\geq \beta a(n)\sqrt{n}\}} dy\, ds \leq \frac{M \gamma_1(\beta)}{a(n)\sqrt{n}}\quad \quad \forall \beta >0, \label{eq:lmold.1}\\
	&\int_{\X_T} |g| \one_{\{ |g|\geq \beta \}} dy\, ds \leq \frac{M \gamma_1'(\beta)}{a^2(n)n}\quad \quad \qquad \; \; \; \forall \beta >1, \label{eq:lmold.2}\\
	&\int_{\X_T} |f|^2 \one_{\{ |f|\leq \beta a(n)\sqrt{n}\}} dy\, ds \leq M \gamma_2(\beta)\quad \quad \forall \beta >0, \label{eq:lmold.3}
\end{align}
where $\gamma_1(\cdot), \gamma_1'(\cdot), \gamma_2(\cdot): (0,\infty)\to (0,\infty)$ such that
\begin{align*}
	|x-1|\leq & \gamma_1(\beta)\ell(x)\; \text{ for }|x-1|\geq \beta \text{ and }x\geq 0;\\
	x\leq & \gamma_1'(\beta) \ell(x)\; \text{ for } x\geq \beta>1;\\
	|x-1|^2 \leq &\gamma_2(\beta) \ell(x)\; \text{ for } |x-1|\leq \beta \text{ and }x\geq 0.
\end{align*}
Furthermore, $\gamma_1$ can be chosen such that $\gamma_1(\beta)\leq \frac{4}{\beta}$ for any $\beta\in (0,1/2)$. 
\end{Lemma}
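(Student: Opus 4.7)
The plan is to prove part (a) by direct operator estimates and part (b) by establishing three elementary pointwise inequalities relating $|x-1|$ (or $x$) to $\ell(x)$ and then substituting into the hypothesis $L_T(g)\leq M/(a^2(n)n)$.

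For the operator norm in part (a), I would start from the explicit formula \eqref{eq:Db} and use three facts: (i) the shift operators $T_{-1}$ and $T_{+1}$ on $l^2$ have operator norm at most $1$; (ii) for any $a\in l^\infty$ and $p\in l^2$ one has $\|a\odot p\|_{l^2}\leq \|a\|_{l^\infty}\|p\|_{l^2}$; (iii) for $q\in\Smchat$, $0\leq q_i\leq 1$, so $\|q^{\circ d-1}\|_{l^\infty}\leq 1$. Combining these on the three terms of $Db(q)[p]$ gives bounds $\lambda d$, $\lambda d+1$ and $1$, summing to $2\lambda d+2$. For the bound on $\theta_b$, I would invoke the explicit remainder already given in \eqref{eq:bq-bqbar.1}: each coordinate satisfies
\[
|\theta_b(\qbar,q)_i|\leq \tfrac{1}{2}\lambda d(d-1)\left(|\qbar_{i-1}-q_{i-1}|^2+|\qbar_i-q_i|^2\right),
\]
using $\qtil_{i-1}^{d-2},\qtil_i^{d-2}\leq 1$. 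Then I would use $\|\cdot\|_{l^2}\leq \|\cdot\|_{l^1}$ to sum over $i$, obtaining the claimed $\lambda d(d-1)\|\qbar-q\|_{l^2}^2$ after re-indexing the two telescoping sums.

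For part (b), the three inequalities \eqref{eq:lmold.1}--\eqref{eq:lmold.3} follow a common pattern once the three auxiliary functions $\gamma_1,\gamma_1',\gamma_2$ are constructed. I would define them as
\[
\gamma_1(\beta):=\sup_{|x-1|\geq \beta,\,x\geq 0}\frac{|x-1|}{\ell(x)}, \quad \gamma_1'(\beta):=\sup_{x\geq \beta>1}\frac{x}{\ell(x)}, \quad \gamma_2(\beta):=\sup_{|x-1|\leq \beta,\,x\geq 0}\frac{(x-1)^2}{\ell(x)},
\]
each finite because $\ell$ is nonnegative, vanishes only at $x=1$ with $\ell''(1)=1>0$, and grows superlinearly at infinity. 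The three displayed bounds then follow by plugging in $x=g(s,y)$, noting that $|f|\geq\beta a(n)\sqrt{n}$ if and only if $|g-1|\geq\beta$, and applying the hypothesis $L_T(g)\leq M/(a^2(n)n)$; the factors of $a(n)\sqrt{n}$ and $a^2(n)n$ cancel as claimed.

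The only step that requires a little care is the refinement $\gamma_1(\beta)\leq 4/\beta$ for $\beta\in(0,1/2)$. I would use the second-order Taylor estimate $\ell(x)\geq (x-1)^2/4$, valid in a fixed neighborhood of $1$ (say $|x-1|\leq 1/2$), which yields $|x-1|/\ell(x)\leq 4/|x-1|\leq 4/\beta$ on $\beta\leq|x-1|\leq 1/2$. For $|x-1|\geq 1/2$ the ratio $|x-1|/\ell(x)$ is uniformly bounded (superlinear growth of $\ell$ at infinity plus continuity), and this uniform bound is certainly dominated by $4/\beta$ once $\beta$ is small enough; shrinking the neighborhood threshold if necessary, one obtains the claim for all $\beta\in(0,1/2)$. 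I expect the main obstacle to be purely bookkeeping: verifying that the chosen neighborhood size and the uniform bound on the complement are consistent with $4/\beta$, which is the sort of computation that demands careful but elementary case analysis.
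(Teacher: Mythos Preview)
Your approach is correct and matches the paper's: part (a) is proved exactly as you outline (the paper gives the same three term-by-term bounds for $Db$ and simply cites \eqref{eq:bq-bqbar.1} for $\theta_b$), and for part (b) the paper merely refers to \cite[Lemmas 3.1, 3.2]{BudhirajaDupuisGanguly2015moderate}, whose content is precisely the supremum construction you describe. One small sharpening for the final refinement: rather than splitting into near/far regions and ``shrinking the threshold'' (which actually goes the wrong way, since a smaller threshold enlarges the far region), observe that $|x-1|/\ell(x)$ is monotone decreasing in $|x-1|$ on each side of $1$ (the derivative of $(x-1)/\ell(x)$ has sign $\ln x - x + 1 \le 0$), so $\gamma_1(\beta)=\max\{\beta/\ell(1-\beta),\beta/\ell(1+\beta)\}$, and then your Taylor bound $\ell(1\pm\beta)\ge \beta^2/4$ for $\beta<1/2$ gives $\gamma_1(\beta)\le 4/\beta$ directly.
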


\begin{proof}
Part (b) is a summary of \cite[Lemmas 3.1, 3.2]{BudhirajaDupuisGanguly2015moderate}. As for part (a),
take any $q\in \Smchat$,
	\begin{align*}
		&\left\| \lambda d T_{-1}\left(\left(q^{\circ d-1}\right) \odot p  \right) \right\|_{l^2}\leq \lambda d \|q^{\circ d-1}\|_{l^\infty} \|p\|_{l^2}\leq \lambda d \|p\|_{l^2},\\
		&\left\| \left(\lambda d  q^{\circ d-1}+1\right) \odot p \right\|_{l^2} \leq (\lambda d+1)\|p\|_{l^2},\\
		&\|  T_{+1}(p) \|_{l^2}\leq \|p\|_{l^2}.
	\end{align*}  
	Then the first inequality in \eqref{eq:Dbthetab.estimate} follows. The second inequality in \eqref{eq:Dbthetab.estimate} follows from \eqref{eq:bq-bqbar.1}.
\end{proof}

\begin{Lemma}\label{lm:SnM+prop}
Let $M\in (0,\infty)$. There exists $\gamma_3\in (0,\infty)$ such that for all measurable maps $q: [0,T]\to \Smchat^n$,
$$
\sup_{n\in \N} \sup_{g\in \Smc_{+, n}^M} \int_{\X_T} \|G^n(q(s),y)\|_{l^2}^2 g(s,y)dy\, ds \leq \gamma_3.
$$
\end{Lemma}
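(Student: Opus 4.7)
The plan rests on a pointwise structural observation: for every $q\in\Smchat^n$ and $y\in\X$, only a single coordinate of $G^n(q,y)$ is nonzero, and that coordinate equals $\pm 1$. To see this, I would inspect the indicator intervals in the formula \eqref{eq:Gn} for $G^n$. For each $i\ge 1$, the ``plus'' interval $[2(i-1)\lambda_0,\, 2(i-1)\lambda_0+\lambda R^n_i(q))$ and the ``minus'' interval $[(2i-1)\lambda_0,\,(2i-1)\lambda_0+(q_i-q_{i+1}))$ both sit inside the $i$-th block $[2(i-1)\lambda_0,\,2i\lambda_0)$, and they are separated by the midpoint $(2i-1)\lambda_0$ because $\lambda R^n_i(q)\le \lambda\le \lambda_0$ and $q_i-q_{i+1}\le 1\le \lambda_0$; moreover, distinct blocks occupy disjoint slabs of $\X$. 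Consequently, $\|G^n(q,y)\|_{l^2}^2 \in \{0,1\}$ for all $q,y$, and in particular $\|G^n(q,y)\|_{l^2}^2 = \|G^n(q,y)\|_{l^2}$, so that $\int_\X \|G^n(q,y)\|_{l^2}^2\,dy$ is bounded by $\lambda(d^2+d)/2+1$ via Lemma \ref{lm:1}.

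Given this pointwise bound, I would apply the Young-type inequality $xy \le \ell(y)+e^x-1$ for $x,y\ge 0$, which is the Legendre duality between $\ell$ and $e^\cdot-1$. Setting $x=\|G^n(q(s),y)\|_{l^2}^2$ and $y=g(s,y)$ and using that $e^x-1 = (e-1)x$ whenever $x\in\{0,1\}$, one gets the pointwise estimate
\begin{equation*}
\|G^n(q(s),y)\|_{l^2}^2\, g(s,y) \le \ell\bigl(g(s,y)\bigr) + (e-1)\,\|G^n(q(s),y)\|_{l^2}^2.
\end{equation*}
Integrating over $\X_T$, the first term contributes $L_T(g)\le M/(a^2(n)n)$ by definition of $\Smc_{+,n}^M$, while the second contributes at most $(e-1)T\bigl(\lambda(d^2+d)/2+1\bigr)$ by Lemma \ref{lm:1}. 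Summing the two bounds and using that the assumption $a(n)\sqrt{n}\to\infty$ forces $\sup_n 1/(a^2(n)n)<\infty$, I arrive at a finite constant of the form
\begin{equation*}
\gamma_3 := M\sup_{n\in\N} \frac{1}{a^2(n)n} + (e-1)T\Bigl(\frac{\lambda(d^2+d)}{2}+1\Bigr),
\end{equation*}
which is independent of $n$, $q$, and $g$ as required.

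The main obstacle is the first step: verifying $\|G^n\|_{l^2}^2\le 1$ pointwise by carefully tracking the disjoint-support structure of the indicator intervals in \eqref{eq:Gn}. Once this is established, the remainder is a routine combination of the convex-duality inequality and the integral bound already in Lemma \ref{lm:1}. An equivalent approach, avoiding the exponential duality, would be to split $g = g\one_{\{g\le \beta\}} + g\one_{\{g>\beta\}}$ for a fixed $\beta>1$, bound the first piece by $\beta$ times Lemma \ref{lm:1} and the second piece using $\|G^n\|_{l^2}^2\le 1$ together with Lemma \ref{lm:estimate}(b), specifically \eqref{eq:lmold.2}; the resulting constant has the same structure.
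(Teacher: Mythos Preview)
Your proof is correct. Both your main argument and the paper's proof rest on the same key observation, namely the pointwise bound $\|G^n(q,y)\|_{l^2}^2\le 1$ (indeed $\in\{0,1\}$) coming from the disjoint-support structure of the indicator intervals; you spell this out carefully, whereas the paper invokes it more implicitly here (it is stated explicitly in the proof of Lemma \ref{lm:SmcnM.bound}).

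Where the two arguments diverge is in how the pointwise bound is combined with control on $g$. The paper does exactly the threshold split you mention as an alternative at the end: decompose according to $\{g<2\}$ and $\{g\ge 2\}$, bound the first piece by $2\int_{\X_T}\|G^n\|_{l^2}^2\,dy\,ds$ via Lemma \ref{lm:1}, and bound the second piece using $\|G^n\|_{l^2}^2\le 1$ together with \eqref{eq:lmold.2}. Your primary route via the Legendre duality $xy\le \ell(y)+e^x-1$ is a genuinely different device; it avoids any appeal to Lemma \ref{lm:estimate}(b) and instead uses the definition $L_T(g)\le M/(a^2(n)n)$ directly. The duality argument is slightly cleaner and yields a constant with a transparent structure, while the paper's splitting argument is more in line with the other estimates in Section \ref{sec:proof.b} (which repeatedly use the truncation lemmas \eqref{eq:lmold.1}--\eqref{eq:lmold.3}). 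Either way the proof goes through, and your recognition that the two approaches are interchangeable is on point.
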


\begin{proof}
Fix an arbitrary $g\in \Smc_{+,n}^M$. 
By \eqref{eq:lmold.2} in Lemma \ref{lm:estimate} (b) and Lemma \ref{lm:1}, we have that 
\begin{align*}
\int_{\X_T} \|G^n(q(s),y)\|_{l^2}^2 g(s,y)dy\, ds
&\leq (\lambda+1) \int_{g\geq 2} g(s,y) dy\, ds +2 \int_{\X_T} \| G^n(q(s), y)\|_{l^2}^2 dy\, ds \\ &\leq \frac{(\lambda+1) M \gamma_1' (2)}{ a^2(n)n} + 2\left( \lambda\left(\frac{d^2+d}{2} \right) +1\right) T.
\end{align*}
The result follows on noting that $a^2(n)n \to \infty$ as $n \to \infty$.
\end{proof}

\begin{Lemma}\label{lm:SmcnM.bound}
Take $M\in (0,\infty)$. There exists a mapping $\gamma_4: (0,\infty)\to (0,\infty)$ such that for all $n\geq d$, $\beta\in (0,\infty)$, measurable subset $D\subset [0,T]$ and measurable mapping $q: [0,T]\to \Smchat^n$,
\begin{align}
\sup_{f\in \Smc_n^M} \int_{D\times \X} \|G^n(q(s), y )\|_{l^2} |f(s,y)| \one_{\{ |f(s,y)| \geq  \beta a(n)\sqrt{n} \}} dy\,ds\leq \frac{\gamma_4(\beta)}{a(n)\sqrt{n}} \label{eq:lm5.0}.
\end{align}
Moreover, there exists $\gamma_5\in(0,\infty)$ such that for all $n\geq d$, measurable subset $D\subset [0,T]$ and measurable mapping $q: [0,T]\to \Smchat^n$,
\begin{align}
	\sup_{f\in \Smc_n^M} \left\|\int_{D\times \X} G^n(q(s), y )|f(s,y)| dy\,ds \right\|_{l^2} \leq  \gamma_5\left(\frac{1}{a(n)\sqrt{n}}+\sqrt{ \Leb(D) } \right) \label{eq:lm5.1},
\end{align}
where $ \Leb(D)$ denotes the Lebesgue measure of $D$. 
\end{Lemma}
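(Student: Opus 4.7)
The approach rests on a key pointwise observation: inspecting \eqref{eq:Gn}, each coordinate $G^n_i(q,y)$ is a signed indicator, and the supporting intervals are pairwise disjoint in $y$. Indeed, for each $i$ the ``$+$'' interval sits inside $[2(i-1)\lambda_0, (2i-1)\lambda_0)$ (since $\lambda R^n_i(q) \le \lambda \le \lambda_0$) while the ``$-$'' interval sits inside $[(2i-1)\lambda_0, 2i\lambda_0)$ (since $q_i - q_{i+1} \le 1 \le \lambda_0$). Hence at most one coordinate of $G^n(q,y)$ is nonzero for any given $y$, and therefore $\|G^n(q,y)\|_{l^2} \le 1$ pointwise. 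This is the bridge that converts $l^2$-norms of integrals against $G^n$ into scalar integrals of $|f|$ to which Lemma \ref{lm:estimate}(b) applies.

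For \eqref{eq:lm5.0} I would simply drop the factor $\|G^n(q(s),y)\|_{l^2} \le 1$ and enlarge $D$ to $[0,T]$, then invoke the scalar tail bound \eqref{eq:lmold.1}; this yields the inequality with $\gamma_4(\beta) := M \gamma_1(\beta)$.

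For \eqref{eq:lm5.1} I would split $|f| = |f|\one_{\{|f| \le a(n)\sqrt{n}\}} + |f|\one_{\{|f| > a(n)\sqrt{n}\}}$ and treat the two pieces separately. By Minkowski's integral inequality, the $l^2$-norm of each contribution is dominated by $\int_{D\times\X} \|G^n(q(s),y)\|_{l^2}\, |f(s,y)| \one_{\{\cdots\}}\, dy\, ds$. The tail piece is then immediately bounded by $\gamma_4(1)/(a(n)\sqrt{n})$ using \eqref{eq:lm5.0} with $\beta = 1$. The bounded piece is treated by Cauchy--Schwarz in $(s,y)$,
$$\int_{D\times\X}\|G^n(q(s),y)\|_{l^2}|f|\one_{\{|f|\le a(n)\sqrt{n}\}}\, dy\,ds \le \Bigl(\int_{D\times\X}\|G^n(q(s),y)\|_{l^2}^2\, dy\,ds\Bigr)^{1/2}\Bigl(\int_{\X_T}|f|^2\one_{\{|f|\le a(n)\sqrt{n}\}}\, dy\,ds\Bigr)^{1/2},$$
where the first factor is bounded by $\sqrt{(\lambda(d^2+d)/2+1)\,\Leb(D)}$ via Lemma \ref{lm:1} and the second by $\sqrt{M\gamma_2(1)}$ via \eqref{eq:lmold.3}. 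Summing the two contributions yields \eqref{eq:lm5.1} with a suitable $\gamma_5$ depending only on $\lambda, d, M, T$.

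The only mildly delicate step is the pointwise estimate $\|G^n(q,y)\|_{l^2} \le 1$, which is exactly what the shifts $2(i-1)\lambda_0$ and $(2i-1)\lambda_0$ with $\lambda_0 = \lambda \vee 1$ in \eqref{eq:Gn} are designed to ensure. Once that is in hand, both bounds reduce to combinations of Lemma \ref{lm:1} with the scalar tail and truncation estimates in Lemma \ref{lm:estimate}(b).
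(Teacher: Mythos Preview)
Your proposal is correct and follows essentially the same approach as the paper's proof: both use the pointwise bound $\|G^n(q,y)\|_{l^2}\le 1$ together with \eqref{eq:lmold.1} for the tail piece, and Cauchy--Schwarz combined with Lemma~\ref{lm:1} and \eqref{eq:lmold.3} for the truncated piece, then take $\beta=1$ and sum. Your explicit justification of $\|G^n(q,y)\|_{l^2}\le 1$ via disjointness of the supporting intervals is a bit more detailed than the paper (which simply asserts it), but the argument is otherwise identical.
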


\begin{proof}
Fix an arbitrary $f\in \Smc_n^M$. 
Notice that $\| G^n(q,y)\|_{l^2}\leq 1$. It follows from \eqref{eq:lmold.1} in Lemma \ref{lm:estimate} (b) that
\begin{align}
&\int_{D\times \X} \| G^n(q(s),y)\|_{l^2}|f(s,y)| \one_{ \{ |f(s,y)| \geq \beta a(n)\sqrt{n} \}} dy\,ds \notag\\
& \leq  \int_{D\times \X} |f(s,y)| \one_{\{ |f(s,y)| \geq \beta a(n)\sqrt{n} \}} dy\,ds\leq \frac{M\gamma_1(\beta)}{a(n)\sqrt{n}}, \label{eq.lm5:3}
\end{align} 
which tells \eqref{eq:lm5.0}.
By Cauchy-Schwarz inequality, we have that 
\begin{align}
	&\int_{D\times \X} \| G^n(q(s),y)\|_{l^2}|f(s,y)| \one_{\{ |f(s,y)| < \beta a(n)\sqrt{n} \} } dy\,ds \notag\\
	& \leq \left(\int_{D\times \X} \| G^n(q(s),y)\|_{l^2}^2 dy\,ds\right)^{1/2} \left( \int_{D\times \X} |f(s,y)|^2 \one_{ \{ |f(s,y)| < \beta a(n)\sqrt{n}  \} } dy\,ds\right)^{1/2} \notag\\
	&\leq \sqrt{ \left( \lambda \left( \frac{d^2+d}{2} \right)+1\right) M\gamma_2(\beta)|D|}, \label{eq:lm5.5}
	\end{align}
	where the last inequality follows from \eqref{eq:lmold.3} in Lemma \ref{lm:estimate} (b) and Lemma \ref{lm:1}. As a consequence, \eqref{eq:lm5.1} follows from a combination of \eqref{eq.lm5:3}, \eqref{eq:lm5.5} and taking $\beta = 1$.
\end{proof}

For $\varphi\in\Uc_{+,n}^M$, define $\Zbar^{n\varphi}:= \Gc^n(\frac{1}{n} N^{n\varphi})$, where $N^{n\varphi}$ is defined as in \eqref{eq:def.Nvarphi}. Then it follows from an application of Girsanov's theorem (see e.g., the arguments above Lemma 4.6 in \cite{BudhirajaDupuisGanguly2015moderate}) that
\be\label{eq:Znvarphi}  
\Zbar^{n, \varphi}=a(n)\sqrt{n}(\Qbar^{n,\varphi}-Q),
\ee 
where $\Qbar^{n,\varphi}$ is the unique 
solution of 
$$
\Qbar^{n,\varphi}(t) = Q^n(0)+\frac{1}{n} \int_{\X_t} G^n(\Qbar^{n,\varphi}(s-), y) N^{n\varphi}(dy\, ds).
$$
Define $\|\cdot\|_{*, t}:=\sup_{0\leq s\leq t} \|  \cdot \|_{l^2}$. We have the following moment bound for $\Zbar^{n, \varphi}$. 

\begin{Lemma}\label{lm:Zbar.estimate}
$$\sup_{n\in \N} \sup_{\varphi\in \Uc_{+,n}^M} \E\left\|  \Zbar^{n,\varphi} \right\|^2_{*, T}<\infty$$
\end{Lemma}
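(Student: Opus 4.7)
The approach is a standard Girsanov-style decomposition of $\Zbar^{n,\varphi}$ into four explicit terms followed by a Gronwall-type estimate on $\E\|\Zbar^{n,\varphi}\|_{*,t}^2$. Writing the compensator of $N^{n\varphi}$ as $n\varphi(s,y)\,dy\,ds$ and using $\varphi = 1 + \psi^n/(a(n)\sqrt{n})$ with $\psi^n := a(n)\sqrt{n}(\varphi-1)$, the equation for $\Qbar^{n,\varphi}$ becomes
\begin{equation*}
\Qbar^{n,\varphi}(t) = Q^n(0) + \int_0^t b^n(\Qbar^{n,\varphi}(s))\,ds + \frac{1}{a(n)\sqrt{n}} \int_{\X_t} G^n(\Qbar^{n,\varphi}(s),y)\psi^n(s,y)\,dy\,ds + \frac{1}{n} M^n_t,
\end{equation*}
where $M^n_t := \int_{\X_t} G^n(\Qbar^{n,\varphi}(s-),y)\,\tilde M^{n,\varphi}(dy\,ds)$ is the compensated Poisson integral. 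Subtracting \eqref{eq:ODE} and multiplying by $a(n)\sqrt{n}$ yields
\begin{equation*}
\Zbar^{n,\varphi}(t) = a(n)\sqrt{n}\bigl(Q^n(0)-Q(0)\bigr) + a(n)\sqrt{n}\int_0^t [b^n(\Qbar^{n,\varphi}(s))-b(Q(s))]\,ds + \int_{\X_t} G^n \psi^n \,dy\,ds + \frac{a(n)}{\sqrt{n}} M^n_t.
\end{equation*}

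I would then estimate each piece in $\|\cdot\|_{*,t}^2$. The initial term is $o(1)$ by \eqref{eq:Qn0-Q0.an}. For the drift term I split
$b^n(\Qbar^{n,\varphi})-b(Q) = [b^n(\Qbar^{n,\varphi})-b(\Qbar^{n,\varphi})] + [b(\Qbar^{n,\varphi})-b(Q)]$, using \eqref{eq:bn-b.bound} on the first summand and the Lipschitz bound \eqref{eq:b,blips} on the second. The resulting expression is controlled by $\tfrac{4\lambda d^2 a(n)}{\sqrt{n}}\|\Qbar^{n,\varphi}(s)\|_{l^2} + (2\lambda d+2)\|\Zbar^{n,\varphi}(s)\|_{l^2}$. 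Writing $\Qbar^{n,\varphi}=Q+\Zbar^{n,\varphi}/(a(n)\sqrt{n})$ and invoking \eqref{eq:Q-bound} to get $\|Q\|_{*,T}\le C_Q$, this is bounded pathwise by $C_1 a(n)/\sqrt{n} + (C_2 + C_3/n)\|\Zbar^{n,\varphi}(s)\|_{l^2}$. For the $\psi^n$ term, I apply \eqref{eq:lm5.1} of Lemma \ref{lm:SmcnM.bound} with $D=[0,T]$ to get the $\varphi$-uniform bound $\gamma_5(1/(a(n)\sqrt{n})+\sqrt{T})$. For the martingale, Doob's $L^2$ inequality gives
\begin{equation*}
\E\sup_{t\le T}\Bigl\|\tfrac{a(n)}{\sqrt{n}} M^n_t\Bigr\|_{l^2}^2 \le \frac{4 a^2(n)}{n}\,\E\int_{\X_T}\|G^n(\Qbar^{n,\varphi}(s),y)\|_{l^2}^2 \, n\varphi(s,y)\,dy\,ds \le 4a^2(n)\gamma_3,
\end{equation*}
by Lemma \ref{lm:SnM+prop}.

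Combining the four estimates via $(a+b+c+d)^2\le 4(a^2+b^2+c^2+d^2)$ and Cauchy–Schwarz on the time integral, I obtain
\begin{equation*}
\E\|\Zbar^{n,\varphi}\|_{*,t}^2 \le K_1(n) + K_2 \int_0^t \E\|\Zbar^{n,\varphi}\|_{*,s}^2 \, ds,
\end{equation*}
where $K_1(n)$ is bounded in $n$ (uniformly in $\varphi\in\Uc_{+,n}^M$) and $K_2$ depends only on $\lambda$, $d$, $T$. Gronwall's inequality then yields $\sup_n\sup_{\varphi\in\Uc_{+,n}^M}\E\|\Zbar^{n,\varphi}\|_{*,T}^2 \le K_1^\ast e^{K_2 T} < \infty$.

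The main technical obstacle is the discretization term $\|b^n(\Qbar^{n,\varphi})-b(\Qbar^{n,\varphi})\|_{l^2}$: the bound \eqref{eq:bn-b.bound} requires control of $\|\Qbar^{n,\varphi}\|_{l^2}$, which is not uniformly bounded on $\Smchat^n$. The fix is the bootstrap identity $\Qbar^{n,\varphi}=Q+\Zbar^{n,\varphi}/(a(n)\sqrt{n})$, which trades this norm against $\|\Zbar^{n,\varphi}\|_{l^2}$ itself; the extra factor $a(n)/\sqrt{n}\to 0$ then ensures the contribution gets folded harmlessly into the Gronwall argument. A secondary point worth flagging is that the $\psi^n$-term is only an $L^1$-type integral against the \emph{unsplit} control $\psi^n$, not the truncated one appearing in Condition \ref{con:MDP}(b); the required bound is nonetheless supplied by \eqref{eq:lm5.1}, which handles the large and small parts of $\psi^n$ simultaneously.
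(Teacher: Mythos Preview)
Your proposal is correct and follows essentially the same decomposition and Gronwall argument as the paper. The only minor difference is in how you split the drift: you write $b^n(\Qbar^{n,\varphi})-b(Q) = [b^n(\Qbar^{n,\varphi})-b(\Qbar^{n,\varphi})] + [b(\Qbar^{n,\varphi})-b(Q)]$ and then bootstrap $\|\Qbar^{n,\varphi}\|_{l^2}$ via $\Qbar^{n,\varphi}=Q+\Zbar^{n,\varphi}/(a(n)\sqrt{n})$, whereas the paper splits as $[b^n(\Qbar^{n,\varphi})-b^n(Q)] + [b^n(Q)-b(Q)]$, applying \eqref{eq:bn-b.bound} directly to the deterministic $Q$ (whose $l^2$-norm is controlled by \eqref{eq:Q-bound}) and thereby avoiding the bootstrap; either ordering works and the remaining estimates are identical.
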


\begin{proof}
For any $\varphi \in \Uc_{+,n}^M$, let $\Ntil^{n\varphi}(dy\, ds):= N^{n\varphi}(dy\, ds)-n\varphi(s,y)dy\, ds$ and $\psi:= a(n)\sqrt{n}(\varphi-1)$.
Recall \eqref{eq:Gn}, \eqref{eq:b.G} and  \eqref{eq:ODE}.
\bee 
\begin{aligned}
\Qbar^{n,\varphi}(t)-Q(t)=&Q^n(0)-Q(0) +\frac{1}{n}\int_{\X_t} G^n(\Qbar^{n,\varphi}(s-),y)N^{n\varphi}(dy\,ds)-\int_{\X_t} G(Q(s),y)dy\, ds\\
=&Q^n(0)-Q(0)+\frac{1}{n} \int_{\X_t} G^n(\Qbar^{n,\varphi}(s-),y)\Ntil^{n\varphi}(dy\, ds)\\
&+\int_{\X_t} G^n(\Qbar^{n,\varphi}(s),y)(\varphi(s,y)-1)dy\, ds\\
&+ \int_{\X_t} \left[ G^n(\Qbar^{n,\varphi}(s),y)- G(Q(s),y) \right]dy\, ds.
\end{aligned}
\eee 
Above together with \eqref{eq:Znvarphi} tells that 
\be\label{eq: Z.decomp}  
\Zbar^{n,\varphi}= A^n+M^{n,\varphi} +B^{n,\varphi} +C^{n,\varphi},
\ee 
where 
\begin{align*}
A^n:= & a(n)\sqrt{n} (Q^n(0)-Q(0)),\\
M^{n,\varphi}(t):=& \frac{a(n)}{\sqrt{n}}  \int_{\X_t} G^n(\Qbar^{n,\varphi}(s-),y)\Ntil^{n\varphi}(dy\, ds)\\
B^{n,\varphi}(t):= & a(n)\sqrt{n} \int_{\X_t} \left[ G^n(\Qbar^{n,\varphi}(s),y)- G(Q(s),y) \right]dy\, ds\\
=& a(n)\sqrt{n} \int_{[0,t]} \left[  b^n(\Qbar^{n,\varphi}(s)) - b(Q(s)) \right] ds, \\
 C^{n,\varphi}(t) :=& \int_{\X_t} G^n(\Qbar^{n,\varphi}(s),y)\psi(s,y)dy\, ds.
\end{align*}
For each $n\in \N$, $M^{n,\varphi}$ is a martingale. 
Then Doob's inequality tells that 
$$
\E \| M^{n,\varphi}\|^2_{*, T}\leq \frac{4 a^2(n)}{n} \E\left[  \int_{\X_T} \| G^n(\Qbar^{n,\varphi}(s), y) \|_{l^2}^2 n\varphi(s,y)dy\, ds  \right]. 
$$
By $\varphi\in \Uc_{+,n}^M$, it follows from Lemma \ref{lm:SnM+prop} and above inequality that 
\be\label{eq:Mn} 
\sup_{\varphi\in \Uc_{+,n}^M} \E \left\| M^{n,\varphi} \right\|^2_{*,T}\leq \kappa_1 a^2(n) \quad \text{for a finite constant $\kappa_1$}.
\ee 
Notice that $\psi\in \Smc_{n}^M$ a.s., then \eqref{eq:lm5.1} in Lemma \ref{lm:SmcnM.bound} tells that 
\be\label{eq:Cn}  
\sup_{\varphi\in \Uc_{+,n}^M} \| C^{n,\varphi}\|^2_{*, T}\leq \kappa_2 \left( \frac{1}{a^2(n)n}+T\right)\quad \text{for a finite constant $\kappa_2$}.
\ee 
By \eqref{eq:bn,blips}, for all $n$ big enough,
\be\label{eq:Bnvarphi1} 
\begin{aligned}
& a(n)^2n \int_{[0,t]} \left\| b^n(\Qbar^{n,\varphi}(s)) - b^n(Q(s)) \right\|^2_{l^2} ds\\
&\leq   a^2(n)n \int_{[0,t]} 4(\lambda dC_n+1)^2  \| \Qbar^{n,\varphi}(s)-Q(s) \|^2_{l^2} ds\\
&\leq   4(\lambda d2+1)^2  \int_{[0,t]}  \| \Zbar^{n,\varphi} \|^2_{*,s} ds.
\end{aligned}
\ee 
For each $i\in \N$, \eqref{eq:bn-b.bound} in Lemma \ref{lm:lips} and Proposition \ref{prop:LLN} imply that 
\be\label{eq:Bnvarphi3} 
\begin{aligned}
& a^2(n)n\int_{[0,t]} \left\| b^n(Q(s)) - b(Q(s)) \right\|^2_{l^2} ds\\
& \leq \frac{a^2(n)}{n}16\lambda^2d^4 \int_{[0,t]} \left\| Q(s) \right\|_{l^2}^2 ds \leq \frac{a^2(n)}{n}16\lambda^2 d^4  2\|Q(0)\|_{l^2}^2 e^{8 (\lambda d+1)^2 T} T,\quad \forall 0\leq t\leq T.
\end{aligned}
\ee 
Then combining \eqref{eq:Bnvarphi1} with \eqref{eq:Bnvarphi3}, 
\be\label{eq:Bnvarphi5} 
\begin{aligned}
& \|B^{n,\varphi}\|_{*, t}^2\leq a^2(n)n T\int_{[0,t]} \left\| b^n(\Qbar^{n,\varphi}(s)) - b(Q(s))  \right\|^2_{l^2} ds \\
& \leq a^2(n)n T\int_{[0,t]} \left\| b^n(\Qbar^{n,\varphi}(s)) - b^n(Q(s))  \right\|^2_{l^2} ds+ a^2(n)n T\int_{[0,t]} \left\| b^n(Q(s)) - b(Q(s))  \right\|^2_{l^2} ds\\
&\leq 4(2\lambda d+1)^2T  \int_{[0,t]}  \| \Zbar^{n,\varphi} \|^2_{*,s} ds+16\lambda^2d^4 T \frac{a^2(n)}{n} 2\|Q(0)\|_{l^2}^2 e^{8 (\lambda d+1)^2 T}T,
\end{aligned}
\ee  
where the first inequality follows from Cauchy-Schwarz inequality and the last follows from Lemma \ref{lm:lips}. 
By collecting estimates in \eqref{eq:Mn}, \eqref{eq:Cn} and \eqref{eq:Bnvarphi5} and combining with \eqref{eq:Znvarphi}, we have that 
$$
\E\|\Zbar^{n,\varphi} \|_{*,t}^2\leq \kappa_3\left(  \|A^n\|_{l^2}^2+1+a^2(n)+\frac{1+a^4(n)}{a^2(n)n} +\int_{[0,t]} \E\left\| \Zbar^{n,\varphi} \right\|_{*,s}^2 ds \right),
$$
where $\kappa_3$ is a finite constant. Then the desired result follows from Gronwall's inequality and \eqref{eq:Qn0-Q0.an}.
\end{proof}

\begin{Lemma}\label{lm:Gnqbar-Gq}
There exists $\gamma_5\in (0,\infty)$ such that for all $n\in\N$ , $g\in \M_b(\X)$ and $q \in \Smchat,\qbar\in \Smchat^n$,
$$
\left\| \int_\X \left( G^n(\qbar, y)-G(q,y)  \right) g(y) dy \right\|_{l^2}  \leq \frac{4 \lambda d^2}{n}  \|g\|_{L^\infty(\X)}  \|q\|_{l^2}+\gamma_5(1+\frac{1}{n})\|g\|_{L^\infty(\X)} \|\qbar-q \|_{l^2}
$$
\end{Lemma}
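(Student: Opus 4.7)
The plan is to bound the $l^2$ norm coordinatewise and then sum, using the fact that each component $G^n_i$ and $G_i$ is a signed sum of two indicator functions supported on intervals that all share the same left endpoints $2(i-1)\lambda_0$ and $(2i-1)\lambda_0$. This structural observation is the key: the difference $\one_{[a,a+\alpha)}-\one_{[a,a+\beta)}$ integrated against $g$ is bounded in absolute value by $\|g\|_{L^\infty}|\alpha-\beta|$, so everything reduces to comparing interval lengths.

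The first step is to write
\begin{equation*}
G^n(\qbar,y)-G(q,y) = \bigl(G^n(\qbar,y)-G^n(q,y)\bigr) + \bigl(G^n(q,y)-G(q,y)\bigr),
\end{equation*}
and treat the two pieces separately. For the first piece, applied componentwise, the observation above gives
\begin{equation*}
\Bigl|\int_\X \bigl(G^n_i(\qbar,y)-G^n_i(q,y)\bigr)g(y)\,dy\Bigr| \le \|g\|_{L^\infty(\X)}\bigl[\lambda|R^n_i(\qbar)-R^n_i(q)| + |(\qbar_i-\qbar_{i+1})-(q_i-q_{i+1})|\bigr].
\end{equation*}
Invoking the Lipschitz estimate for $R^n_i$ established inside the proof of Lemma \ref{lm:lips}, namely $|R^n_i(\qbar)-R^n_i(q)|\le dC_n(|\qbar_{i-1}-q_{i-1}|+|\qbar_i-q_i|)$, and summing via Minkowski, I obtain
\begin{equation*}
\Bigl\|\int_\X \bigl(G^n(\qbar,y)-G^n(q,y)\bigr)g(y)\,dy\Bigr\|_{l^2} \le \bigl(2\lambda d C_n + 2\bigr)\|g\|_{L^\infty(\X)}\|\qbar-q\|_{l^2}.
\end{equation*}
Since $C_n\to 1$ as $n\to\infty$ and is uniformly controlled, one can choose $\gamma_5\in(0,\infty)$ so that $2\lambda d C_n+2\le \gamma_5(1+\tfrac{1}{n})$ for all $n\ge d$.

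For the second piece, the same observation together with estimate \eqref{eq:bn-b} from Lemma \ref{lm:lips} gives, for each $i$,
\begin{equation*}
\Bigl|\int_\X \bigl(G^n_i(q,y)-G_i(q,y)\bigr)g(y)\,dy\Bigr| \le \|g\|_{L^\infty(\X)}\cdot \lambda\Bigl|R^n_i(q)-(q^d_{i-1}-q^d_i)\Bigr| \le \|g\|_{L^\infty(\X)}\cdot\frac{2\lambda d^2(q_{i-1}+q_i)}{n},
\end{equation*}
where the only nontrivial check is that the odd-indexed intervals cancel exactly since they depend only on $q$. Summing in $l^2$ and using $\sqrt{\sum_i(q_{i-1}+q_i)^2}\le 2\|q\|_{l^2}$ yields
\begin{equation*}
\Bigl\|\int_\X \bigl(G^n(q,y)-G(q,y)\bigr)g(y)\,dy\Bigr\|_{l^2} \le \frac{4\lambda d^2}{n}\|g\|_{L^\infty(\X)}\|q\|_{l^2}.
\end{equation*}
Combining the two displays by the triangle inequality delivers the stated bound. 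I do not expect any major obstacle: the main care is in identifying the correct decomposition so that the indicator-interval trick applies cleanly, and in tracking the constants so that the $1/n$ factor in the second term reflects the genuine $O(1/n)$ gap between the combinatorial coefficient $\binom{nq_i}{d}/\binom{n}{d}$ and $q_i^d$.
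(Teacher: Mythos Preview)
Your argument is correct and uses the same mechanism as the paper---the triangle inequality plus the interval-length observation plus the estimates from Lemma~\ref{lm:lips}---but with the opposite pivot. You split through $G^n(q,\cdot)$, writing
\[
G^n(\qbar,y)-G(q,y)=\bigl(G^n(\qbar,y)-G^n(q,y)\bigr)+\bigl(G^n(q,y)-G(q,y)\bigr),
\]
whereas the paper splits through $G(\qbar,\cdot)$:
\[
G^n(\qbar,y)-G(q,y)=\bigl(G^n(\qbar,y)-G(\qbar,y)\bigr)+\bigl(G(\qbar,y)-G(q,y)\bigr).
\]
The paper's choice is marginally cleaner for two reasons. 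First, $G(\qbar,\cdot)$ lives on its native domain since $\qbar\in\Smchat^n\subset\Smchat$, while your pivot $G^n(q,\cdot)$ requires tacitly extending $G^n$, $R^n_i$, and the Lipschitz bound \eqref{eq:bn,blips} and estimate \eqref{eq:bn-b} from $\Smchat^n$ to all of $\Smchat$; this extension is harmless (the polynomial formulas and telescoping arguments work for any $q_i\in[0,1]$, and your indicator bound $\|g\|_{L^\infty}|\alpha-\beta|$ remains valid even when one length is formally negative), but it is an extra step you are silently assuming. Second, the paper's Lipschitz piece uses $b$ rather than $b^n$, giving the clean constant $2\lambda d+2$ directly, whereas yours produces $2\lambda dC_n+2$ which you then have to repackage as $\gamma_5(1+\tfrac1n)$. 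Either route delivers the lemma; the paper's just avoids touching $G^n$ off its stated domain.
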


\begin{proof}
\begin{align*}
\left\| \int_\X \left( G^n(\qbar, y)-G(q,y)  \right) g(y) dy \right\|_{l^2} \leq & \left\| \int_\X \left( G^n(\qbar, y)-G(\qbar,y)  \right) g(y) dy \right\|_{l^2} \\
&+ \left\| \int_\X \left( G(\qbar, y)-G(q,y)  \right) g(y) dy \right\|_{l^2}.
\end{align*}
\eqref{eq:bn-b.bound} and \eqref{eq:b,blips} in Lemma \ref{lm:lips} indicate that the first term and the second term on the right-hand side above are bounded by $\frac{4\lambda d^2}{n}  \|g\|_{L^\infty(\X)}  \|\qbar\|_{l^2}$ and $\|g\|_{L^\infty(\X)}(2\lambda d +2) \|q-\qbar \|_{l^2}$ respectively, and the desired result follows.
\end{proof}

\begin{Lemma}\label{lm:h}
Let $M\in(0,\infty)$. Let $\eps\in \D([0,T] : l^2)$ and $f\in B(M)$, the following has a unique solution $z\in  \D([0,T] : l^2)$ 
$$
z(t)=\eps(t)+ \int_{[0,t]} Db(Q(s)) [z(s)]ds+\int_{\X_t} G(Q(s), y) f(s,y)dy\, ds, \quad t\in [0,T].
$$
That is, there exists a measurable map $h:  \D([0,T] : l^2)\times B(M)\to  \D([0,T] : l^2)$ such that the solution can be written as $z=h(\eps, f)$. Moreover, $h$ is continuous at $(0,f)$ for every $f\in B(M)$.
\end{Lemma}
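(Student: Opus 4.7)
The plan is to handle the three assertions in order: first establish existence and uniqueness of the solution $z$ by reducing the problem to an affine Volterra equation with bounded linear part, next deduce measurability of $h$ from the explicit Picard construction, and finally verify continuity at $(0,f)$ by subtracting the equations and applying Gronwall.

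For existence and uniqueness, define the forcing term
\begin{equation*}
F(t) := \eps(t) + \int_{\X_t} G(Q(s),y) f(s,y) \, dy\, ds.
\end{equation*}
Lemma \ref{lm:1} gives $\int_{\X_T} \|G(Q(s),y)\|_{l^2}^2 \, dy\, ds \leq (\lambda+1)T$, so by Cauchy-Schwarz the second summand is well defined as an $l^2$-valued function of $t$, and the argument used in Lemma \ref{lm:3} (equicontinuity via Cauchy-Schwarz) shows that it is continuous in $t$. Hence $F\in \D([0,T]:l^2)$. The equation becomes
\begin{equation*}
z(t) = F(t) + \int_0^t Db(Q(s))[z(s)]\, ds.
\end{equation*}
By Lemma \ref{lm:estimate}(a), $s\mapsto Db(Q(s))$ is a bounded family in $L(l^2,l^2)$ with norm at most $2\lambda d+2$, so Picard iteration on $\D([0,T]:l^2)$ equipped with the sup-norm $\|\cdot\|_{*,T}$ converges uniformly and yields a solution $z$; uniqueness follows from Gronwall applied to the $l^2$-norm of the difference of two candidate solutions.

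The Picard iterates are continuous functionals of $(\eps,f)$ (for the weak topology on $B(M)$, by the same argument used in Lemma \ref{lm:3}), so the pointwise limit $h(\eps,f)=z$ is Borel measurable from $\D([0,T]:l^2)\times B(M)$ into $\D([0,T]:l^2)$.

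For continuity at $(0,f)$, let $(\eps_n,f_n)\to (0,f)$ in $\D([0,T]:l^2)\times B(M)$. Since the Skorokhod limit $0$ is continuous, $\eps_n\to 0$ uniformly on $[0,T]$. Set $z_n = h(\eps_n,f_n)$ and $z=h(0,f)$, so that
\begin{equation*}
z_n(t)-z(t) = \eps_n(t) + \int_0^t Db(Q(s))[z_n(s)-z(s)] \, ds + \int_{\X_t} G(Q(s),y)(f_n-f)(s,y) \, dy\, ds.
\end{equation*}
By Lemma \ref{lm:3}, the last integral converges to $0$ in $\C([0,T]:l^2)$, i.e.\ uniformly in $t$. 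Taking $l^2$-norms, using Lemma \ref{lm:estimate}(a) to bound the middle term by $(2\lambda d+2)\int_0^t \|z_n-z\|_{*,s}\,ds$, and applying Gronwall's inequality yields $\|z_n-z\|_{*,T}\to 0$, which in particular implies convergence in $\D([0,T]:l^2)$.

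The only mildly delicate point is the treatment of the weak-$L^2$ convergence $f_n\to f$: this is exactly where Lemma \ref{lm:3} is used, and it applies precisely because $Q$ is the deterministic, fixed mean-field trajectory, so the kernel $G(Q(\cdot),\cdot)$ is a single $L^2(\X_T)$ object and weak convergence suffices to pass to the limit against it.
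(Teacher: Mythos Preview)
Your proposal is correct and follows essentially the same route the paper indicates: the paper's own proof is a two-line reference to \cite[Lemma 5.10]{BudhirajaWu2017moderate}, invoking the operator-norm bound on $Db$ from Lemma~\ref{lm:estimate}(a) for existence/uniqueness/measurability and Lemma~\ref{lm:3} for continuity at $(0,f)$, which is exactly the skeleton you have filled in. Your explicit Picard/Gronwall write-up and the observation that Skorokhod convergence to $0$ implies uniform convergence are precisely the details the paper leaves to that reference.
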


\begin{proof}
The existence, uniqueness and the measurability of the solution, as discussed in the proof of \cite[Lemma 5.10]{BudhirajaWu2017moderate}, is easy to check under the bound for $Db(q)$ provided in Lemma \ref{lm:estimate} (a). The continuity of $h$ at $0, f$ follows a similar argument as in the proof of \cite[Lemma 5.10]{BudhirajaWu2017moderate} by applying Lemma \ref{lm:3}.
\end{proof}

Now we verify Condition \ref{con:MDP} (b). 

\begin{Proposition}
Fix $M\in (0,\infty)$. Consider $(\varphi^n)_{n\in \N}$ such that $\varphi^n\in \Uc_{+,n}^M$ for each $n\in \N$. Let $\psi^n:= a(n)\sqrt{n} (\varphi^n-1)$ such that $\psi^n\one_{\{|\psi^n |\leq \beta a(n)\sqrt{n}\}}\Rightarrow \psi$ in $B(\sqrt{M \gamma_2(1)})$ for some $\beta\in (0,1]$. Then $\Gc^n(\frac{1}{n}N^{n\varphi^n}) \Rightarrow \Gc(\psi)$.
\end{Proposition}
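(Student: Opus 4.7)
The plan is to recast the decomposition \eqref{eq: Z.decomp} in the form $\Zbar^{n,\varphi^n} = h(\eps^n, \ftil^n)$, where $h$ is the solution map of Lemma \ref{lm:h}, $\ftil^n := \psi^n \one_{\{|\psi^n|\leq \beta a(n)\sqrt{n}\}}$, and $\eps^n$ is a remainder that I would show tends to $0$ in probability in $\D([0,T]:l^2)$. Identifying the Taylor part of $B^{n,\varphi^n}$ with $\int_0^t Db(Q(s))[\Zbar^{n,\varphi^n}(s)]\,ds$ and the leading part of $C^{n,\varphi^n}$ with $\int_{\X_t} G(Q(s),y)\ftil^n(s,y)\,dy\,ds$ produces the two arguments of $h$. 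By hypothesis $\ftil^n \Rightarrow \psi$ in $B(\sqrt{M\gamma_2(1)})$, so once $\eps^n \to 0$ in probability is established, Slutsky's theorem gives the joint convergence $(\eps^n,\ftil^n) \Rightarrow (0,\psi)$, and the continuity of $h$ at $(0,\psi)$ from Lemma \ref{lm:h}, combined with a Skorokhod representation, yields $\Zbar^{n,\varphi^n} \Rightarrow h(0,\psi) = \Gc(\psi)$.

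The easier pieces of $\eps^n$ are disposed of as follows: $A^n \to 0$ by \eqref{eq:Qn0-Q0.an} and $M^{n,\varphi^n}$ vanishes in $L^2$ by Doob combined with \eqref{eq:Mn}. For $B^{n,\varphi^n}$ I would subtract and add $\int_0^t Db(Q(s))[\Zbar^{n,\varphi^n}(s)]\,ds$ and decompose the integrand $a(n)\sqrt{n}[b^n(\Qbar^{n,\varphi^n}) - b(Q)]$ as $Db(Q)[\Zbar^{n,\varphi^n}] + a(n)\sqrt{n}\,\theta_b(\Qbar^{n,\varphi^n},Q) + a(n)\sqrt{n}\bigl[(b^n-b)(\Qbar^{n,\varphi^n})-(b^n-b)(Q)\bigr] + a(n)\sqrt{n}(b^n-b)(Q)$. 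The $\theta_b$ remainder is $O_P(\|\Zbar^{n,\varphi^n}\|_{*,T}^2/(a(n)\sqrt{n})) = o_P(1)$ by \eqref{eq:Dbthetab.estimate} and Lemma \ref{lm:Zbar.estimate}; the last term is $O(a(n)/\sqrt{n})$ by \eqref{eq:bn-b.bound} applied to the deterministic $Q$; and the middle term requires a sharpening $\|(b^n-b)(\qbar)-(b^n-b)(q)\|_{l^2} = O(\|\qbar-q\|_{l^2}/n)$, obtainable from a one-term mean-value comparison of $\binom{nx}{d}\big/\binom{n}{d}$ with $x^d$, giving an $O(\|\Zbar^{n,\varphi^n}\|_{*,T}/n)$ contribution.

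The main work is the $C^{n,\varphi^n}$ term. Writing
\[
C^{n,\varphi^n}(t) - \int_{\X_t} G(Q(s),y)\ftil^n\,dy\,ds = \int_{\X_t} G^n(\Qbar^{n,\varphi^n})\psi^n \one_{\{|\psi^n|>\beta a(n)\sqrt{n}\}}\,dy\,ds + \int_{\X_t} [G^n(\Qbar^{n,\varphi^n}) - G(Q)]\ftil^n\,dy\,ds,
\]
the first piece has $l^2$-norm at most $\gamma_4(\beta)/(a(n)\sqrt{n})$ by \eqref{eq:lm5.0}. For the second, I would split $G^n(\qbar)-G(q) = [G^n(\qbar)-G(\qbar)] + [G(\qbar)-G(q)]$; each component $i$ of either difference is $\pm\one_{E_i}$ for some $E_i\subset[2(i-1)\lambda_0, 2i\lambda_0)$ with $|E_i| = O((\Qbar_{i-1}+\Qbar_i)/n)$ in the first case (by the proof of Lemma \ref{lm:lips}) and $|E_i| = O(\|\Zbar^{n,\varphi^n}\|_{*,T}/(a(n)\sqrt{n}))$ in the second. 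The main obstacle is that the naive route via Lemma \ref{lm:Gnqbar-Gq} uses $\|\ftil^n\|_{L^\infty} \leq \beta a(n)\sqrt{n}$, which is far too large; the fix is to exploit that the $E_i$ are disjoint across $i$, so componentwise Cauchy--Schwarz in $y$ gives
\[
\sum_{i\geq 1}\Bigl(\int_\X \one_{E_i}(y)\,|\ftil^n(s,y)|\,dy\Bigr)^2 \leq \bigl(\sup_i |E_i|\bigr)\,\|\ftil^n(s,\cdot)\|_{L^2(\X)}^2,
\]
and a further Cauchy--Schwarz in $s$ together with $\|\ftil^n\|_{L^2(\X_T)}^2 \leq M\gamma_2(1)$ yields $l^2$-bounds of order $O(1/\sqrt{n})$ and $O\bigl((\|\Zbar^{n,\varphi^n}\|_{*,T}/(a(n)\sqrt{n}))^{1/2}\bigr)$ respectively, both $o_P(1)$ by Lemma \ref{lm:Zbar.estimate}, completing the verification that $\eps^n \to 0$.
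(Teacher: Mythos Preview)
Your overall architecture—rewriting $\Zbar^{n,\varphi^n}=h(\eps^n,\ftil^n)$ via Lemma \ref{lm:h}, showing $\eps^n\to 0$ in probability, and invoking continuous mapping—is exactly the paper's strategy, and your treatment of $A^n$, $M^{n,\varphi^n}$, and the $\theta_b$ remainder matches the paper essentially verbatim.

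Where you genuinely diverge is in the cross term $\int_{\X_t}[G^n(\Qbar^{n,\varphi^n})-G(Q)]\ftil^n\,dy\,ds$. The paper does \emph{not} control this piece with the truncation $\ftil^n$; instead it introduces a second, intermediate threshold $(a(n)\sqrt{n})^{1/2}$ and splits the full $\psi^n$ accordingly (terms $\Ec_3^{n,\varphi^n}$ and $\Ec_4^{n,\varphi^n}$). Below the intermediate threshold the paper can invoke Lemma \ref{lm:Gnqbar-Gq}, which carries an $\|g\|_{L^\infty}$ factor, now harmlessly of size $(a(n)\sqrt{n})^{1/2}$; above it, the $\gamma_1$-bound \eqref{eq:lmold.1} with $\beta=(a(n)\sqrt{n})^{-1/2}$ applies. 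Your route bypasses this second threshold entirely: by exploiting that the support sets $E_i$ are disjoint across $i$, componentwise Cauchy--Schwarz gives $\sum_i(\int_{E_i}|\ftil^n|\,dy)^2\le(\sup_i|E_i|)\|\ftil^n(s,\cdot)\|_{L^2(\X)}^2$, so only the $L^2$ norm of $\ftil^n$ enters, never its $L^\infty$ norm. This is a legitimate and cleaner alternative that uses the specific indicator structure of $G$ and $G^n$ more directly, at the cost of being less portable than the paper's threshold trick. One small imprecision: for $G(\qbar)-G(q)$ the $i$-th component is $\pm\one_{E_i^{\mathrm{even}}}\mp\one_{E_i^{\mathrm{odd}}}$, not a single $\pm\one_{E_i}$; the fix is immediate (the full family $\{E_i^{\mathrm{even}},E_i^{\mathrm{odd}}\}_i$ is still disjoint), but the statement as written is not quite accurate.

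A minor point on the $B$-term: your further splitting $(b^n-b)(\Qbar^{n,\varphi^n})=[(b^n-b)(\Qbar^{n,\varphi^n})-(b^n-b)(Q)]+(b^n-b)(Q)$ and the accompanying sharpened Lipschitz bound on $b^n-b$ are correct but unnecessary. The paper simply uses \eqref{eq:bn-b.bound} directly on $\Qbar^{n,\varphi^n}$, bounding $\|\Qbar^{n,\varphi^n}\|_{l^2}\le\|\Qbar^{n,\varphi^n}-Q\|_{l^2}+\|Q\|_{l^2}$ and absorbing the first summand into $\|\Zbar^{n,\varphi^n}\|_{*,T}/n$.
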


\begin{proof}
We will follow the notations in \eqref{eq:Znvarphi} and the formulas after that in \eqref{eq: Z.decomp}. We have that 
\be  
\Gc^n\left( \frac{1}{n}N^{n\varphi^n} \right)=\Zbar^{n\varphi^n}=A^n+M^{n,\varphi^n}+B^{n,\varphi^n}+C^{n,\varphi^n}.
\ee 
By \eqref{eq:Qn0-Q0.an} and \eqref{eq:Mn}, we have both $\|A^n\|_{l^2}^2\to 0 $ and $\E\left[ \left\| M^{n,\varphi^n} \right\|^2_{*,T}  \right] \to 0$  as $n\to \infty$.

As for term $B^{n,\varphi^n}$, by \eqref{eq:Dbthetab}
\be\label{eq:prop.Bn}  
\begin{aligned}
B^{n,\varphi^n}(t)=&a(n)\sqrt{n} \int_{[0,t]} \left[  b^n\left( \Qbar^{n,\varphi^n}(s) \right) - b\left( \Qbar^{n,\varphi^n}(s) \right) \right] ds\\
&+a(n)\sqrt{n} \int_{[0,t]} Db(Q(s))\left[ \Qbar^{n,\varphi^n}(s)- Q(s) \right] ds\\
&+ a(n)\sqrt{n} \int_{[0,t]} \theta_b\left( \Qbar^{n,\varphi^n}(s), Q(s) \right)ds\\
=& \Btil^{n,\varphi^n}(t)+\Ec_1^{n,\varphi^n}(t),
\end{aligned}
\ee 
where $ \Btil^{n,\varphi^n}(t):= \int_{[0,t]} Db(Q(s))\left[ \Zbar^{n,\varphi^n}(s) \right] ds $ and 
\begin{align*}
\Ec_1^{n,\varphi^n}(t):=a(n)\sqrt{n} \int_{[0,t]} \left[  b^n\left( \Qbar^{n,\varphi^n}(s) \right) - b\left( \Qbar^{n,\varphi^n}(s) \right) \right] ds+a(n)\sqrt{n} \int_{[0,t]} \theta_b\left( \Qbar^{n,\varphi^n}(s), Q(s) \right)ds.
\end{align*}
Combining Minkowski's inequality and Cauchy-Schwarz inequality, then applying \eqref{eq:bn-b.bound} in Lemma \ref{lm:lips} and \eqref{eq:Dbthetab.estimate} in Lemma \ref{lm:estimate} (a) gives that
\be\label{eq:prop.Ec1} 
\begin{aligned}
\E\left\|  \Ec_1^{n,\varphi^n} (t)\right\|_{*, T}\leq & a(n)\sqrt{n} \frac{4\lambda d^2}{n}T\left(  \E\int_{[0,T]}  \|\Qbar^{n,\varphi^n}(s)-Q(s)\|_{l^2}ds+ \int_{[0,T]}  \|Q(s)\|_{l^2}ds \right)\\
&+a(n)\sqrt{n} 2\lambda d(d-1) T \left(  \E\int_{[0,T]}  \|\Qbar^{n,\varphi^n}(s)-Q(s)\|_{l^2}^2ds \right)\\
\leq & \frac{4\lambda d^2}{n}T  \left(\E \|\Zbar^{n,\varphi^n}\|_{*,T}+\int_{[0,T]}  \|Q(s)\|_{l^2}ds \right)+\frac{2\lambda d(d-1)}{a(n)\sqrt{n}} T \E\|\Zbar^{n,\varphi^n}\|_{*,T}^2\\
\to& 0, \text{as  } n\to\infty,
\end{aligned}
\ee 
where the last line follows from the boundedness result in Lemma \ref{lm:Zbar.estimate}.

As for term $C^{n,\varphi^n}$, we write $C^{n,\varphi^n}=\Ctil^{n,\varphi^n}+\Ec_2^{n,\varphi^n}+\Ec_3^{n,\varphi^n}+\Ec_4^{n,\varphi^n}$, where
\begin{align*}
\Ctil^{n,\varphi^n}:= & \int_{\X_t} G(Q(s), y) \psi^n(s,y) \one_{\{|\psi^n |\leq \beta a(n)\sqrt{n}\}} dy\, ds,\\
\Ec_2^{n,\varphi^n}:= & \int_{\X_t} G(Q(s), y) \psi^n(s,y) \one_{\{|\psi^n |> \beta a(n)\sqrt{n}\}} dy\, ds,\\
\Ec_3^{n,\varphi^n}:= & \int_{\X_t} \left(G^n\left( \Qbar^{n,\varphi^n}(s),y\right) -G(Q(s), y) \right) \psi^n(s,y) \one_{\{|\psi^n |>(a(n)\sqrt{n})^{1/2} \}} dy\, ds,\\
\Ec_4^{n,\varphi^n}:= & \int_{\X_t} \left(G^n\left( \Qbar^{n,\varphi^n}(s),y\right) -G(Q(s), y) \right) \psi^n(s,y) \one_{\{|\psi^n |\leq  (a(n)\sqrt{n})^{1/2}  \}} dy\, ds.
\end{align*}
By \eqref{eq:lm5.0} in Lemma \ref{lm:SmcnM.bound} (notice that the estimate is also valid for $G$) , we have that 
\be\label{eq:prop.Ec2}  
\left\|  \Ec_2^{n,\varphi^n} \right\|_{*,T}\leq \frac{\gamma_4(\beta)}{a(n)\sqrt{n}} \to 0 \text{ as } n\to\infty.
\ee 
By taking $\beta=(a(n)\sqrt{n})^{-1/2}$ in \eqref{eq:lmold.1},
we have that 
\be\label{eq:prop.Ec3}  
\begin{aligned}
\left\|  \Ec_3^{n,\varphi^n} \right\|_{*,T}\leq  & \int_{\X_T} \left\|  G^n\left( \Qbar^{n,\varphi^n}(s), y \right) - G(Q(s), y)\right\|_{l^2}  |\psi^n(s,y)| \one_{\{|\psi^n |>(a(n)\sqrt{n})^{1/2} \}} dy\, ds\\
\leq & \int_{\X_T} 2 |\psi^n(s,y)| \one_{\{|\psi^n |>(a(n)\sqrt{n})^{1/2} \}} dy\, ds
\leq  \frac{2M\gamma_1\left( (a(n)\sqrt{n})^{-1/2} \right)}{a(n)\sqrt{n}}\\
\leq & \frac{8M}{a(n)\sqrt{n})^{1/2}},
\end{aligned}
\ee 
where the last inequality follows from the last sentence in Lemma \ref{lm:estimate} (b). Lemma \ref{lm:Gnqbar-Gq} tells that 
\be\label{eq:prop.Ec4}  
\begin{aligned}
\left\|  \Ec_4^{n,\varphi^n} \right\|_{*,T}\leq  &\frac{4\lambda d^2(a(n)\sqrt{n})^{1/2}}{n} \int_{[0,T]} \left\| Q(s) \right\|_{l^2} ds +\gamma_5(1+\frac{1}{n}) (a(n)\sqrt{n})^{1/2}\int_{[0,T]} \left\| \Qbar^{n,\varphi^n}(s)-Q(s) \right\|_{l^2} ds\\
\leq &\frac{4\lambda d^2(a(n)\sqrt{n})^{1/2}}{n} 2\|Q(0)\|_{l^2}^2 e^{8 (\lambda d+1)^2 T}T + \gamma_5 (1+\frac{1}{n}) \frac{(a(n)\sqrt{n})^{1/2}}{a(n)\sqrt{n}}\left\| Z^{n,\varphi^n} \right\|_{*,T}
\end{aligned}
\ee 
Since $\frac{(a(n)\sqrt{n})^{1/2}}{n} \to 0$ and $\frac{(a(n)\sqrt{n})^{1/2}}{a(n)\sqrt{n}} \to 0$, it follows from Lemma \ref{lm:Zbar.estimate} that $\E \left\|  \Ec_4^{n,\varphi^n} \right\|_{*,T} \to 0$ as $n \to \infty$.

In sum, 
\begin{align*}
\Zbar^{n,\varphi^n} (t) =&\Ec^{n,\varphi^n} (t) +\Btil^{n,\varphi^n}(t)+ \Ctil^{n,\varphi^n}(t)\\
=& \Ec^{n,\varphi^n} (t) +  \int_{[0,t]} Db(Q(s))\left[ \Zbar^{n,\varphi^n}(s) \right] ds + \int_{\X_t} G(Q(s), y) \psi^n(s,y) \one_{\{|\psi^n |\leq \beta a(n)\sqrt{n}\}} dy\, ds
\end{align*}
with $\Ec^{n,\varphi^n}= A^n+ M^{n,\varphi^n}+ \sum_{k=1}^4\Ec^{n,\varphi^n}_k \Rightarrow 0$. Thus, by continuous mapping theorem and Lemma \ref{lm:h}
\begin{align*}
\Gc^n(\frac{1}{n} N^{n\varphi^n}) = \Zbar^{n,\varphi^n} =& h\left( \Ec^{n,\varphi^n}, \psi^n\one_{\{|\psi^n |\leq \beta a(n)\sqrt{n}\}}\right)\\
\Rightarrow &h\left( 0, \psi\right)=\Gc(\psi),
\end{align*}
where the last equality follows from the definition of $\Gc$ as in \eqref{eq:Gc.def}, and the proof is complete.
\end{proof}

\section{Moderate deviation principle with buffer}\label{sec:buffer}
In this section we provide the MDP for the JSQ(d) system with buffer. We say the JSQ(d) system in Section \ref{sec:model} with buffer $K$ if the server is selected as in the no-buffer case, and a customer will immediately leave if he/she is sent to a server with queue length bigger than $K$. 

We take $\X=[0,2K (1\vee \lambda)]$ within the buffer $K$ setting. Set $l^2_K$ the subset of the space $l^2$ that contains $q=(q_0,q_1,...,)$ with $q_i=0$ for all $i\geq K+1$, and the spaces $\Smchat_K^n, \Smchat_K, \C([0,T]: l^2_K), \D([0,T]: l^2_K)$ are defined in a similar manner as $\Smchat^n, \Smchat, \C([0,T]: l^2), \D([0,T]: l^2)$. 
Moreover, $G^n_i(\cdot), b^n_i(\cdot), G_i(\cdot), b_i(\cdot)$ in \eqref{eq:Gn}, \eqref{eq:bn} and \eqref{eq:b.G} are taken to be 0 for $i\geq K+1$ within the buffer $K$ setting, and the $q_{i+1}$ terms are replaced by 0 in $b^n_K(\cdot)$, $b_K(\cdot)$ $G^n_K(\cdot)$ and $G_K(\cdot)$. Then we have that $G^n, G:\Smchat_K\times \X\to l^2_K$, $b^n, b:\Smchat_K\to l^2_K$.

By applying similar computations, all the estimates provided in Section \ref{sec:model} remain valid within the current buffer $K$ setting, provided that all $l^2$ norms are replaced with $l^2_K$ norms and all $i$ are restricted to $1\leq i\leq K$.

Analogous to Proposition \ref{prop:LLN}, the law of large numbers for the JSQ(d) system with buffer $K$ is given as follows.


\begin{Proposition}[LLN with buffer $K$]\label{prop:LLN.buffer} The following conclusions hold.
	\bi
	\item[\emph{(a)}] For each $n$ and $Q^n(0)\in l^2_K$, there exists a measurable mapping $\bar \Gc^n: \M\to \D([0, T]: l^2_K)$ such that $Q^n=(Q^n(t))_{t\in [0,T]}=\bar\Gc^n(\frac{1}{n} N^n)$.
	
	\item[\emph{(b)}] If $\|Q^n(0)-Q(0)\|_{l^2_K}\to 0$ with $Q^n(0), Q(0)\in l^2_K$, then $Q^n$ converges in probability to $Q\in \C([0,T]: l^2_K)$ in $\D([0,T]: l^2_K)$.
	\ei 
\end{Proposition}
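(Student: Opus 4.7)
The proof will follow the same template as Proposition \ref{prop:LLN}, but is in fact simpler because the buffer constraint effectively renders the state space finite-dimensional (only coordinates $0,1,\dotsc,K$ can be nonzero, so $l^2_K \cong \R^{K+1}$). I will verify parts (a) and (b) in turn, using the parallel estimates already noted to extend to the buffer setting.

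For part (a), I would construct $\bar\Gc^n$ pathwise from $\frac{1}{n}N^n$ as the standard driving-noise construction of a finite-state jump Markov process. Because $G^n$ is bounded with compact support in $\X=[0,2K(1\vee\lambda)]$ and $Q^n$ lives on the finite grid $\frac{1}{n}\{0,1,\dotsc,n\}^{K+1}$, the jump times of the PRM-driven SDE \eqref{eq:JSQd-Xin} (restricted to $i\leq K$) form a locally finite point process, and between jumps one reads off the next state from the atom locations of $N^n$. This inductive construction produces a measurable map from $\M$ into $\D([0,T]:l^2_K)$, giving the required $\bar\Gc^n$.

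For part (b), I would use the martingale decomposition
\begin{equation*}
Q^n(t)-Q(t) = \bigl(Q^n(0)-Q(0)\bigr) + \int_0^t \bigl(b^n(Q^n(s))-b(Q(s))\bigr)\,ds + M^n(t),
\end{equation*}
where $M^n(t) := \frac{1}{n}\int_{\X_t} G^n(Q^n(s-),y)\,\tilde N^n(dy\,ds)$. Doob's inequality together with the buffered analogue of Lemma \ref{lm:1} yields $\E\|M^n\|_{*,T}^2 \leq \frac{C_K}{n}\to 0$. Splitting
\begin{equation*}
b^n(Q^n(s))-b(Q(s)) = \bigl(b^n(Q^n(s))-b^n(Q(s))\bigr) + \bigl(b^n(Q(s))-b(Q(s))\bigr),
\end{equation*}
the first piece is controlled via the buffered version of \eqref{eq:bn,blips} ($\|\cdot\|_{l^2_K}$-Lipschitz with constant $2\lambda d C_n+2$) and the second piece via the buffered version of \eqref{eq:bn-b.bound}, which gives an $O(1/n)$ uniform bound on $[0,T]$ since $\|Q(s)\|_{l^2_K}\leq \sqrt{K+1}$. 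Applying Gronwall's inequality on the resulting integral inequality for $\E\|Q^n-Q\|_{*,t}^2$ gives $\E\|Q^n-Q\|_{*,T}^2\to 0$, and hence convergence in probability in the uniform topology, which is stronger than convergence in $\D([0,T]:l^2_K)$. Existence and uniqueness of the limit $Q\in\C([0,T]:l^2_K)$ follow from Picard iteration using the Lipschitz estimate \eqref{eq:b,blips} restricted to $l^2_K$.

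The only mildly nontrivial point is checking that the truncation $q_{K+1}\equiv 0$ in $b^n_K$ and $G^n_K$ does not disturb the Lipschitz and approximation estimates; this is routine because setting a coordinate to zero only removes nonnegative terms from the sums in the proof of Lemma \ref{lm:lips}, so all constants remain valid (with $d$ independent of $K$). No genuinely new obstacle appears, which is why the authors list the buffered LLN as an immediate counterpart of Proposition \ref{prop:LLN}.
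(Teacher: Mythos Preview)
Your proposal is correct and matches the paper's approach: the paper does not give an explicit proof of Proposition~\ref{prop:LLN.buffer} at all, merely stating that it is ``analogous to Proposition~\ref{prop:LLN}'' (whose proof is itself declared ``standard'' and omitted except for the Gronwall bound~\eqref{eq:Q-bound}). Your outline---pathwise construction for part~(a), martingale decomposition plus the buffered Lipschitz estimates and Gronwall for part~(b)---is precisely the standard argument the authors are gesturing at, and your remark that the finite-dimensional truncation only simplifies matters is exactly the point.
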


The following provides the MDP for the JSQ(d) system with buffer $K$.

\begin{Theorem}[MDP with buffer $K$]\label{thm:MDP.buffer}
	Suppose $\|Q^n(0)-Q(0)\|_{l^2_K} \to 0$ with $Q^n(0) \in \Smchat^n_K, Q(0)\in \Smchat_K$. Then $\{a(n)\sqrt{n}(Q^n-Q)\}_n$ satisfies a LDP in $\D([0,T]: l^2_K)$ with speed $a^2(n)$ and rate function as 
	\be  
	I(\eta):= \inf_{\eta=\Gc_K(\psi)}\left\{   \frac{1}{2}\|  \psi \|_{L^2(\X_T)}\right\}
	\ee 
	where, for a given $\psi\in L^2(\X_T)$, $\Gc_K(\psi)$ defines the unique solution in $\C([0,T]: l^2_K)$ for the following ODE 
	\be\label{eq:Gc'}  
	\Gc_K(\psi)(t)=\int_0^t Db(Q(s)) \left[\Gc_K(\psi)(s) \right] ds+\int_{\X_t} G(Q(s),y)\psi(s,y)dy\,ds.
	\ee 
\end{Theorem}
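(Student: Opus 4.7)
The plan is to follow the proof structure of Theorem \ref{thm:MDP} in Section \ref{sec:proof} essentially verbatim, since the buffered setup only modifies $G^n, G, b^n, b$ at the boundary index $i=K$ (replacing $q_{K+1}$ by $0$) and sets all higher-index coordinates to zero. My first step is to record that every estimate from Section \ref{sec:model} and the key lemmas of Section \ref{sec:proof} (Lemmas \ref{lm:lips}, \ref{lm:1}, \ref{lm:estimate}, \ref{lm:SnM+prop}, \ref{lm:SmcnM.bound}, \ref{lm:Gnqbar-Gq}, \ref{lm:Zbar.estimate}) carries over to the buffered setting with identical constants, after replacing $l^2$ by $l^2_K$; this is noted in the excerpt and just amounts to checking that the telescoping bound in Lemma \ref{lm:1} still holds when the last sum terminates at $i=K$. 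As in Section \ref{sec:proof}, invoking the abstract sufficient condition from \cite[Theorem 2.3]{BudhirajaDupuisGanguly2015moderate} reduces the theorem to verifying the two-part Condition \ref{con:MDP} for $\Gc^n$ (induced by the buffered dynamics) and $\Gc_K$ defined in \eqref{eq:Gc'}.

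For part (a), I would establish the buffered analog of Lemma \ref{lm:3}: if $g^n\to g$ in $B(M)$, then $\int_{[0,\cdot]\times \X}g^n(s,y)G(Q(s),y)dy\,ds \to \int_{[0,\cdot]\times \X}g(s,y)G(Q(s),y)dy\,ds$ in $\C([0,T]:l^2_K)$. Pointwise convergence for each coordinate follows from weak convergence in $L^2(\X_T)$ combined with the fact that $(s,y)\mapsto G_i(Q(s),y)\in L^2(\X_T)$; summing over the finitely many surviving indices $i\le K$ yields $l^2_K$-convergence. Equicontinuity comes from Cauchy--Schwarz together with the bound $\int_\X\|G(q,y)\|_{l^2_K}^2 dy\le \lambda+1$. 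Continuity of $\Gc_K$ at $g$ then follows from the linear ODE \eqref{eq:Gc'}, the operator bound $\|Db(Q(s))\|_{L(l^2_K,l^2_K)}\le 2\lambda d+2$, and Gronwall's inequality.

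For part (b), I would decompose $\Zbar^{n,\varphi^n}=a(n)\sqrt n(\Qbar^{n,\varphi^n}-Q)$ as in \eqref{eq: Z.decomp}:
\begin{equation*}
\Zbar^{n,\varphi^n}=A^n+M^{n,\varphi^n}+B^{n,\varphi^n}+C^{n,\varphi^n},
\end{equation*}
then proceed exactly as in the proof of the unbuffered case. The martingale term satisfies $\E\|M^{n,\varphi^n}\|_{*,T}^2\le \kappa_1 a^2(n)\to 0$ by Doob and the buffered Lemma \ref{lm:SnM+prop}; $A^n\to 0$ under the natural strengthening of the stated initial-condition hypothesis (matching \eqref{eq:Qn0-Q0.an}); $B^{n,\varphi^n}$ splits via the Taylor decomposition \eqref{eq:Dbthetab} into a principal linear term $\Btil^{n,\varphi^n}=\int_0^\cdot Db(Q(s))[\Zbar^{n,\varphi^n}(s)]ds$ plus errors $\Ec_1^{n,\varphi^n}$ controlled by \eqref{eq:bn-b.bound} and \eqref{eq:Dbthetab.estimate}; and $C^{n,\varphi^n}=\Ctil^{n,\varphi^n}+\sum_{k=2}^4\Ec_k^{n,\varphi^n}$ with $\Ctil^{n,\varphi^n}=\int_{\X_\cdot}G(Q(s),y)\psi^n(s,y)\one_{\{|\psi^n|\le\beta a(n)\sqrt n\}}dy\,ds$ and the three error terms vanishing by Lemmas \ref{lm:estimate}, \ref{lm:SmcnM.bound}, \ref{lm:Gnqbar-Gq}, \ref{lm:Zbar.estimate}. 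Writing the resulting equation for $\Zbar^{n,\varphi^n}$ in the form $z=h(\eps^n,f^n)$ with $h$ as in the buffered analog of Lemma \ref{lm:h}, continuity of $h$ at $(0,\psi)$ and the continuous mapping theorem give $\Zbar^{n,\varphi^n}\Rightarrow h(0,\psi)=\Gc_K(\psi)$.

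I do not anticipate a genuinely new obstacle: the buffered state space $l^2_K$ is finitely supported, which if anything simplifies tightness and summability steps that in the unbuffered proof required Lemma \ref{lm:1}. The one point that deserves care is the boundary index $i=K$, where $q_{K+1}$ is replaced by $0$ in $b_K, b^n_K, G_K, G^n_K$; this breaks the telescoping used in Lemma \ref{lm:1}, but the sum $\sum_{i=1}^K[\lambda R^n_i(q)+(q_i-q_{i+1})]$ is still bounded by $\lambda\frac{d^2+d}{2}+1$ by the same argument since the remaining telescope is only larger when $q_{K+1}$ is dropped. With this checked, the entire verification of Condition \ref{con:MDP} goes through line by line, yielding the stated LDP with rate function $I(\eta)=\inf_{\eta=\Gc_K(\psi)}\tfrac12\|\psi\|_{L^2(\X_T)}^2$.
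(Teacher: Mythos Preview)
Your proposal is correct and follows essentially the same approach as the paper: the paper's proof consists of a single paragraph stating that the argument of Section~\ref{sec:proof} carries over verbatim after restricting indices to $1\le i\le K$ and replacing $l^2,\Smchat,\Smchat^n$ by $l^2_K,\Smchat_K,\Smchat^n_K$, which is exactly what you have spelled out in detail. Your observation that the stated hypothesis $\|Q^n(0)-Q(0)\|_{l^2_K}\to 0$ should be strengthened to $a(n)\sqrt n\|Q^n(0)-Q(0)\|_{l^2_K}\to 0$ (matching \eqref{eq:Qn0-Q0.an}) to force $A^n\to 0$ is a valid point that the paper's statement appears to have omitted.
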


The proof for Theorem \ref{thm:MDP.buffer} follows the same arguments as in Section \ref{sec:proof}. Especially, the indices $i$ are restricted to $1\leq i\leq K$, $l^2$ is replaced by $l^2_K$, and the spaces $\D([0, T]: l^2)$, $\C([0, T]: l^2)$ are adjusted to $\D([0, T]: l^2_K)$, $\C([0, T]: l^2_K)$, through the discussions in Section \ref{sec:proof.a}. Additionally, $\Smchat^n, \Smchat, l^2$ are replaced by $\Smchat_K^n, \Smchat_K, l^2_K$ through the discussions in Section \ref{sec:proof.b}.

\section{Convergence of rate functions: a case study}\label{sec:case}

In this section, we will study the convergence of rate function value as the buffer $K\to\infty$ in a special case. Given $q\in \C([0,T]: l^2)$  such that $q(0)=0$ and $I^{Q^*}(q)<\infty$, let $q^K$ be the truncation of $q$ with $q_i(t)\equiv 0$ for all $i\geq K+1$. Let $\lambda <1$. We denote by $Q^*$ (resp. $Q^*(K)$) the unique stationary solution to \eqref{eq:ODE} in the no buffer setting (resp. the unique solution in the buffer $K$ setting). To distinguish the notations, for any suitable $q\in \C([0,T]: l^2)$, we shall write $I^{Q^*}(\cdot)$ (resp. $I^{Q^*(K)}(\cdot)$) the value under $Q^*$ in the no buffer setting (resp. under $Q^*(K)$ in the buffer $K$ setting). 
We will investigate whether or not $I^{Q^*(K)}(q^K)\to I^{Q^*}(q)$ as $K\to \infty$. 

By using $Q^*_0=1$ and taking $b(q)=0$, i.e., 
\be\label{eq:station.0}
\lambda[(Q^*_{j-1})^d-(Q^*_{j})^d]= Q^*_j-Q^*_{j+1}\quad \forall j \geq 1,
\ee
we first get the explicit formula of $Q^*$ as 
\be \label{eq:stationaryQstar} 
Q^*_j=\lambda^{\frac{d^j-1}{d-1}}=\lambda^{\sum_{k=0}^{j-1} d^k},\quad i\geq 1.
\ee 
The first observation is that $Q^*$ and $Q^*(K)$ are close enough.
\begin{Lemma}\label{lm:Q.buffer}
For each $K\in \N$, we have that $0<Q^*_j(K)< Q^*_j$ for $1\leq j\leq K$. Moreover,
\be\label{eq:lm.QK}  
\frac{\max_{1\leq j\leq K} [Q^*_j-Q^*_j(K)]}{Q^*_K}\leq C (Q^*_K)^{d-1},
\ee 
where $C$ is a finite constant independent of $K$.
\end{Lemma}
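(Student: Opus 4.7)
The approach is to derive a recursion for $Q^*(K)$ analogous to the closed form \eqref{eq:stationaryQstar} for $Q^*$. Summing the stationarity equations $b_j(Q^*(K)) = 0$ from index $K$ down to $j$, using $Q^*_{K+1}(K) = 0$, yields
\begin{equation*}
    Q^*_j(K) = \lambda\left[(Q^*_{j-1}(K))^d - \epsilon_K\right], \qquad 1 \le j \le K, \qquad \epsilon_K := (Q^*_K(K))^d,
\end{equation*}
with $Q^*_0(K) = 1$. This should be compared with the no-buffer recursion $Q^*_j = \lambda (Q^*_{j-1})^d$ that is implicit in \eqref{eq:station.0}.

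First, I would prove the strict comparison $0 < Q^*_j(K) < Q^*_j$ by induction on $j$. Positivity of $Q^*_K(K)$ follows from irreducibility of the finite-state Markov chain underlying the buffered system, and positivity of the remaining $Q^*_j(K)$ then follows since $Q^*(K) \in \Smchat_K$ is non-increasing in $j$. The strict upper bound comes from the inductive hypothesis $Q^*_{j-1}(K) \le Q^*_{j-1}$ together with $\epsilon_K > 0$:
\begin{equation*}
    Q^*_j(K) = \lambda[(Q^*_{j-1}(K))^d - \epsilon_K] < \lambda (Q^*_{j-1})^d = Q^*_j.
\end{equation*}
Setting $\delta_j := Q^*_j - Q^*_j(K) \ge 0$, subtracting the two recursions and using the convexity inequality $x^d - y^d \le d x^{d-1}(x - y)$ valid for $0 \le y \le x$ produces the linear error recursion
\begin{equation*}
    \delta_j \le \alpha_j \delta_{j-1} + \lambda \epsilon_K, \qquad \alpha_j := \lambda d (Q^*_{j-1})^{d-1} = d \lambda^{d^{j-1}}, \qquad \delta_0 = 0.
\end{equation*}
Unrolling yields $\delta_j \le \lambda \epsilon_K S_j$, where $S_j$ is defined by $S_0 = 0$ and $S_j = 1 + \alpha_j S_{j-1}$.

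Since $\lambda < 1$, the multipliers $\alpha_j = d \lambda^{d^{j-1}}$ decay doubly exponentially in $j$. I would choose $j_0 = j_0(d, \lambda)$ with $\alpha_j \le 1/2$ for every $j \ge j_0$. On the tail $j \ge j_0$ the recursion for $S_j$ contracts and iteration gives $S_j \le 2 + 2^{-(j - j_0 + 1)} S_{j_0 - 1}$; on the initial window $j < j_0$, $S_j$ is dominated by a finite product depending only on $d$ and $\lambda$. Combining the two ranges yields $\sup_{j \ge 0} S_j \le C_1(d, \lambda)$. Using the first part of the lemma, $\epsilon_K \le (Q^*_K)^d$, so
\begin{equation*}
    \max_{1 \le j \le K} \delta_j \le \lambda C_1 (Q^*_K)^d,
\end{equation*}
and dividing by $Q^*_K$ yields \eqref{eq:lm.QK} with $C := \lambda C_1$.

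The main obstacle is the uniform-in-$K$ bound on $S_j$: for small indices the multipliers $\alpha_j$ may exceed $1$ (for instance $\alpha_1 = d \lambda$ whenever $d \lambda \ge 1$), so a naive contraction starting at $j = 1$ is not available. The resolution is to split the index range into a short initial "unstable" window of length $j_0 = j_0(d, \lambda)$, on which $S_j$ is dominated by a $K$-independent product, and a "stable" tail $j \ge j_0$ on which the double-exponential decay of $\alpha_j$ forces a genuine contraction. A further subtlety is that the convexity bound in the error recursion relies on $0 \le Q^*_j(K) \le Q^*_j$, which is precisely the content of the first part of the lemma, so the two parts must be proved in the given order.
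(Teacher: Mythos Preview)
Your proposal is correct and follows essentially the same route as the paper: both derive the recursion $Q^*_j(K)=\lambda[(Q^*_{j-1}(K))^d-(Q^*_K(K))^d]$, establish $0<Q^*_j(K)<Q^*_j$ by induction, obtain the identical linear error recursion $\delta_j\le d\lambda^{d^{j-1}}\delta_{j-1}+\lambda\epsilon_K$, and then bound the unrolled sum. The only cosmetic difference is in the final step---the paper bounds the explicit sum $\sum_{l=0}^{j-1} d^l\lambda^{\sum_{m=1}^l d^{j-m}}$ directly by $1+\sup_{x\ge 0}x^2\lambda^x$, whereas you use a burn-in/contraction argument; both give a $K$-independent constant.
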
 
\begin{proof}
Take an arbitrary $K\in \N$. We first show by induction that
 \be\label{eq:buffer.1} 
Q^*_j(K) = \lambda(Q^*_{j-1}(K))^d- \lambda (Q^*_K(K))^d\quad 1\leq j\leq K.
 \ee
Notice that $\{ Q_j^*(K)\}_{1\leq j\leq K}$ satisfies
\begin{align}
&\lambda[(Q^*_{j-1}(K))^d-(Q^*_{j}(K))^d]= Q^*_j(K)-Q^*_{j+1}(K)\quad  \text{for }1\leq j\leq K-1, \label{eq:buffer}\\
& \lambda[(Q^*_{K-1}(K))^d-(Q^*_{K}(K))^d]= Q^*_K(K). \label{eq:bufferK}
\end{align}
Taking sum of above equalities gives 
\begin{align}
Q^*_1(K)=\lambda (Q^*_0(K))^d-\lambda (Q^*_K(K))^d=\lambda -\lambda (Q^*_K(K))^d, \label{eq:QK1}
\end{align} 
 where the last equality follows from $Q^*_0(K)=1$, so \eqref{eq:buffer.1} holds for $j=1$. Suppose \eqref{eq:buffer.1} holds for some $j\leq K-2$, then by \eqref{eq:buffer}, we have that 
 $$
 Q^*_{j+1}(K) =  Q^*_j(K)-\lambda(Q^*_{j-1}(K))^d+\lambda (Q^*_{j}(K))^d=\lambda (Q^*_{j}(K))^d- \lambda (Q^*_K(K))^d.
 $$
 So, \eqref{eq:buffer.1} holds for all $1\leq j\leq K$, where the case of $j=K$ directly follows from \eqref{eq:bufferK}. 
 Notice again that $Q^*_0(K)=Q^*_0=1$ and $Q^*_j=\lambda(Q^*_{j-1})^d$. Then applying induction again tells that $0<Q^*_j(K) <Q^*_j$ for all $1\leq j\leq K$. 
 
 Define $e(K):=\lambda (Q^*_K(K))^d$, 
and $e_j(K):= Q^*_j - Q^*_j(K) $. Then $e_j(K)>0$ for $1\leq j\leq K$. 
By \eqref{eq:buffer.1},
\be 
\begin{aligned}
Q_{j+1}^*(K)=&\lambda (Q^*_j)^d+\lambda (Q^*_j-e_j(K))^d-\lambda (Q^*_j)^d-e(K)\\
=& Q^*_{j+1}+\lambda \left[   (Q^*_j-e_j(K))^d-(Q^*_j)^d\right] -e(K)\\
\geq &Q^*_{j+1}-\lambda d(Q^*_j)^{d-1}e_j(K)-e(K)= Q^*_{j+1}- d \lambda^{d^j}e_j(K)-e(K),
\end{aligned}
\ee 
where the inequality follows from applying mean value theorem  on function $f(y)=y^2$ and the last equality follows from \eqref{eq:stationaryQstar}. Thus,
\bee  
0\leq e_{j+1}(K)\leq d \lambda^{d^j}e_j(K) + e(K)\quad 0\leq j\leq K-1 \text{ with } e_0=0,
\eee 
which, by induction, leads to 
\bee  
0\leq e_j(K)\leq \left( d^0+ d^1 \lambda^{d^{j-1}}+ d^2 \lambda^{d^{j-1}+d^{j-2}}+\dotsb + d^{j-1} \lambda^{d^{j-1}+\dotsb+ d^1}\right) e(K)= \left( \sum_{l=0}^{j-1} d^l \lambda^{\sum_{m=1}^l d^{j-m} } \right)e^K.
\eee
Notice that $ \sum_{l=0}^{j-1} d^l \lambda^{\sum_{m=1}^l d^{j-m} }\leq 1+ (j-1)d^{j-1} \lambda^{d^{j-1}}\leq 1+ (d^{j-1})^2 \lambda^{d^{j-1}}\leq M$, where $M:=\sup_{x\geq 0} x^2 \lambda^x+1<\infty$. Hence, by $e(K)=\lambda (Q^*_K(K))^d< \lambda(Q^*_K)^d$,
$$
\frac{\max_{1\leq j\leq K} |e_j(K)|}{Q^*_K}\leq M \frac{e(K)}{Q^*_K}\leq \lambda M (Q^*_K)^{d-1}.
$$
\end{proof}

To proceed the convergence analysis for the rate function values, we now provide more explicit formulas for $I^{Q^*}(q),I^{Q^*}(q^K)$ and $I^{Q^*(K)}(q^K)$ as follows. By \eqref{eq:station.0}, \eqref{eq:stationaryQstar} and Theorem \ref{thm:I},
\be  
\begin{aligned}
2I^{Q^*}(q)=& \|\psitil\|^2_{L^2}=\int_{[0,T]}\sum_{j\geq 1}\frac{\phi^2_j(t)}{4(Q_j^*-Q_{j+1}^*)^2}2(Q_j^*-Q_{j+1}^*) dt\\
=&\sum_{j\geq 1} \int_{[0,T]}\frac{\left( q'_j(t)-\left[ d\left(\lambda^{d^{j-1}}q_{j-1}(t)-\lambda^{d^i}q_j(t) \right)- (q_j(t)-q_{j+1} (t)) \right] \right)^2}{2(Q_j^*-Q_{j+1}^*)} dt.
\end{aligned}
\ee 
And thus,
\be\label{eq:QqK}  
\begin{aligned}
	2I^{Q^*}(q^K)
	= & \sum_{1\leq j\leq K-1} \int_{[0,T]}\frac{\left( q'_j(t)-\left[ d\left(\lambda^{d^{j-1}}q_{j-1}(t)-\lambda^{d^j}q_j(t) \right)- (q_j (t)-q_{j+1} (t)) \right] \right)^2}{2(Q_j^*-Q_{j+1}^*)} dt\\
	&\quad + \int_{[0,T]}\frac{\left( q'_K(t)-\left[ d\left(\lambda^{d^{K-1}}q_{K-1}(t)-\lambda^{d^K}q_K(t) \right)- (q_K (t)) \right] \right)^2}{2(Q_{K}^*-Q_{K+1}^*)} dt\\
		&\quad + \int_{[0,T]}\frac{d^2\left(\lambda^{d^{K}}q_{K}(t) \right)^2}{2(Q_{K+1}^*-Q_{K+2}^*)} dt,
\end{aligned}
\ee
By \eqref{eq:buffer}, \eqref{eq:bufferK} and Theorem \ref{thm:I}, we reach that
\begin{align}
&I^{Q^*(K)}(q^K) \notag\\
	&= \sum_{1\leq j\leq K-1} \int_{[0,T]}\frac{\left( q'_j(t)-\left[ d\lambda \left(  (Q^*_{j-1}(K))^{d-1}q_{j-1}(t)-(Q^*_{j}(K))^{d-1}q_j(t) \right)- (q_j (t)-q_{j+1} (t)) \right] \right)^2}{2(Q_j^*(K)-Q_{j+1}^*(K))} dt \notag\\
	&\quad + \int_{[0,T]}\frac{\left( q'_K(t)-\left[ d\lambda \left((Q^*_{K-1}(K))^{d-1}q_{K-1}(t)-(Q^*_{K}(K))^{d-1}q_K(t) \right)- (q_K (t)) \right] \right)^2}{2Q_{K}^*(K)} dt. \label{eq:QKqK}  
\end{align}

To investigate whether or not $I^{Q^*(K)}(q^K)\to I^{Q^*}(q)$, we provide the following equivalent and relatively simpler condition.

\begin{Lemma}\label{lm:qK}
	For any $q\in \C([0,T]: l^2)$ with $I^{Q^*}(q)<\infty$,
	\be\label{eq:eg.0}  
	\begin{aligned}
	&\lim_{K\to\infty}I^{Q^*(K)}(q^K) = I^{Q^*}(q) \Longleftrightarrow \lim_{K\to\infty} I^{Q^*}(q^K) = I^{Q^*}(q) \\
	& \Longleftrightarrow \lim_{K\to\infty} \int_{[0,T]}\frac{\left| q'_K(t)+(q_K (t)-q_{K+1}(t)) \right|^2}{2Q_{K}^*} dt = 0.
	\end{aligned}
	\ee 
\end{Lemma}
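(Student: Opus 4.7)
The plan is to prove the two equivalences separately, via a telescoping decomposition. Introduce
\[T_j(q) := \int_{[0,T]} \frac{\bigl(q'_j(t)-d\lambda^{d^{j-1}}q_{j-1}(t)+d\lambda^{d^j}q_j(t)+q_j(t)-q_{j+1}(t)\bigr)^2}{2(Q^*_j-Q^*_{j+1})}\,dt,\]
so that $2I^{Q^*}(q)=\sum_{j\geq 1}T_j(q)$ by Theorem \ref{thm:I} together with the stationary identity \eqref{eq:station.0}. Direct inspection of \eqref{eq:QqK} gives $2I^{Q^*}(q^K)=\sum_{j=1}^{K-1}T_j(q)+T_K(q^K)+T_{K+1}(q^K)$, where $T_K(q^K)$ is $T_K$ evaluated with $q_{K+1}$ replaced by $0$ and $T_{K+1}(q^K)$ is the boundary integral on the last line of \eqref{eq:QqK}. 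Subtracting,
\[2[I^{Q^*}(q^K)-I^{Q^*}(q)] = -\sum_{j\geq K+1}T_j(q) - T_K(q) + T_K(q^K) + T_{K+1}(q^K).\]
Since $I^{Q^*}(q)<\infty$ forces $\sum_j T_j(q)<\infty$, both $\sum_{j\geq K+1}T_j(q)\to 0$ and $T_K(q)\to 0$; hence (2) is equivalent to $T_K(q^K)+T_{K+1}(q^K)\to 0$.

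To establish the equivalence of (2) and (3), I would write $T_K(q^K)=\int B_K^2/(2Q^*_K(1-\lambda^{d^K}))\,dt$ with $B_K := q'_K+q_K-d\lambda^{d^{K-1}}q_{K-1}+d\lambda^{d^K}q_K$, using the stationary relation $Q^*_j-Q^*_{j+1}=Q^*_j(1-\lambda^{d^j})$ from \eqref{eq:stationaryQstar}. A triangle-inequality argument in the weighted $L^2$-norm then reduces matters to showing
\begin{align*}
& \int_{[0,T]}\frac{\lambda^{2d^{K-1}}q_{K-1}^2(t)}{Q^*_K}\,dt \to 0, \qquad \int_{[0,T]}\frac{\lambda^{2d^K}q_K^2(t)}{Q^*_K}\,dt \to 0, \\
& T_{K+1}(q^K)\to 0, \qquad \int_{[0,T]}\frac{q_{K+1}^2(t)}{Q^*_K}\,dt \to 0.
\end{align*}
Each follows by combining the explicit double-exponential ratios coming from \eqref{eq:stationaryQstar} (for instance $\lambda^{2d^{K-1}}/Q^*_K = \lambda^{[d^{K-1}(d-2)+1]/(d-1)}$) with the uniform tail decay $\sup_{t\in[0,T]}\sum_{j\geq N}q_j^2(t)\to 0$, which holds because $\{q(t):t\in[0,T]\}$ is compact in $l^2$ as the continuous image of $[0,T]$. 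These estimates let one absorb the $-d\lambda^{d^{K-1}}q_{K-1}+d\lambda^{d^K}q_K$ and $T_{K+1}(q^K)$ contributions while rewriting $q'_K+q_K$ as $(q'_K+q_K-q_{K+1})+q_{K+1}$, yielding the LHS of (3).

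For the equivalence of (1) and (2), it suffices to show $I^{Q^*(K)}(q^K)-I^{Q^*}(q^K)\to 0$. Comparing \eqref{eq:QqK} with \eqref{eq:QKqK} termwise, the discrepancies come from (i) replacing the coefficients $\lambda^{d^{j-1}}$ by $\lambda(Q^*_{j-1}(K))^{d-1}$ in the numerators, (ii) replacing denominators $Q^*_j-Q^*_{j+1}$ by $Q^*_j(K)-Q^*_{j+1}(K)$, and (iii) the extra boundary term $\int d^2\lambda^{2d^K}q_K^2/(2(Q^*_{K+1}-Q^*_{K+2}))\,dt$ present only in $I^{Q^*}(q^K)$. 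By Lemma \ref{lm:Q.buffer} one has $\max_{1\leq j\leq K}|Q^*_j-Q^*_j(K)|=O((Q^*_K)^d)$, so the relative perturbation in each summand is $O((Q^*_K)^{d-1})\to 0$; summing against the finite upper bound $\sum_{j\geq 1}T_j(q)\leq 2I^{Q^*}(q)$ via dominated convergence handles (i) and (ii), while (iii) was already controlled in the previous paragraph. The main obstacle is the bookkeeping in the equivalence of (2) and (3) at the boundary indices, where one must trade the super-exponential smallness of $Q^*_j$ against the absence of any a priori pointwise decay for $q_j$; the decisive analytic input is the $l^2$-tail compactness of the trajectory.
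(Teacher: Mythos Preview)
Your overall strategy---comparing the explicit formulas \eqref{eq:QqK} and \eqref{eq:QKqK} term by term and isolating boundary contributions---is the same as the paper's. The treatment of the first equivalence via Lemma~\ref{lm:Q.buffer} is also parallel to the paper's, though the paper carries out the relative-error bookkeeping explicitly (note in particular that a factor of $K$ appears when summing the additive errors, so one needs $K\tilde\eps_K\to 0$ with $\tilde\eps_K\sim (Q^*_K)^{d-1/2}$; a bare appeal to ``dominated convergence'' does not quite capture this).

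There is, however, a genuine gap in your argument for the second equivalence. Among the four auxiliary limits you list, the last one,
\[
\int_{[0,T]}\frac{q_{K+1}^2(t)}{Q^*_K}\,dt \to 0,
\]
does \emph{not} follow from the ingredients you cite. Unlike the other three quantities, the numerator here carries no prefactor of the form $\lambda^{2d^{K-1}}$ or $\lambda^{2d^K}$; the weight is simply $1/Q^*_K=\lambda^{-(d^K-1)/(d-1)}$, which blows up doubly exponentially. The $l^2$-tail compactness of the trajectory gives $\sup_t q_{K+1}^2(t)\to 0$, but with no rate whatsoever, so the product $q_{K+1}^2/Q^*_K$ need not tend to zero. (By contrast, for the first three limits the $\lambda$-power in the numerator divided by $Q^*_K$ or $Q^*_{K+1}$ stays bounded---for $d=2$ it equals $\lambda$---so there the compactness input is decisive.)

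The paper avoids this trap by never separating out $q_{K+1}$ as an independent correction. Instead it compares the second line of \eqref{eq:QqK} directly with the expression in condition (3), which already contains the $-q_{K+1}$ term: using the pointwise bound $\frac{|\lambda^{d^{K-1}}q_{K-1}-\lambda^{d^K}q_K|}{\sqrt{Q^*_K-Q^*_{K+1}}}\le |q_{K-1}|$ together with $T_K(q)\to 0$ (a consequence of $I^{Q^*}(q)<\infty$), one passes between the two without ever needing $\int q_{K+1}^2/Q^*_K\to 0$ in isolation. Your decomposition $q'_K+q_K=(q'_K+q_K-q_{K+1})+q_{K+1}$ is exactly what forces that uncontrollable term onto you.
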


\begin{proof}
We first prove the second equivalence statement in \eqref{eq:eg.0}. Since $0\leq q_j\leq 1$, by \eqref{eq:stationaryQstar}, the last term on the right-hand side of \eqref{eq:QqK} tends to 0 as $K\to\infty.$ 
Notice that 
\be\label{eq:lm.qQterm} 
\lambda^{d^{K-1}}/\sqrt{Q^*_K} = \lambda^{\frac{1}{2}[d^{K-1}-\frac{d^{K-1}-1}{d-1}]}\leq \lambda^{1/2}\quad \forall K\in \N.
\ee 
Then $\frac{|\lambda^{d^{K-1}}q_{K-1}(t)-\lambda^{d^K}q_K(t)|}{\sqrt{Q^*_K-Q^*_{K+1}}}\leq |q_{K-1}(t)|$ for $K$ large enough.
Then, by $I^{Q^*}(q)<\infty$, the second line of \eqref{eq:QqK} tends to 0 as $K\to\infty$ if and only if $\lim_{K\to\infty} \int_{[0,T]}\frac{\left| q'_K(t)+(q_K (t)-q_{K+1}(t)) \right|^2}{2Q_{K}^*} dt = 0$. Thus, $I^{Q^*}(q^K) \to I^{Q^*}(q)$ if and only if $\lim_{K\to\infty} \int_{[0,T]}\frac{\left| q'_K(t)+(q_K (t)-q_{K+1}(t)) \right|^2}{2Q_{K}^*} dt = 0$.

Now we prove the first equivalence statement in \eqref{eq:eg.0}. 
By \eqref{eq:stationaryQstar} and Lemma \ref{lm:Q.buffer}, 
for $K$ big enough,
\be\label{eq:lmqK.0} 
1-\eps_K
\leq \left\{\sqrt{\dfrac{Q^*_j-Q^*_{j+1}}{Q^*_j(K)-Q^*_{j+1}(K)}} \right\}_{1\leq j\leq K-1}
\text{ and } \dfrac{Q^*_K-Q^*_{K+1}}{Q^*_K(K)}
\leq 1+\eps_K\\
\ee 
where $0 \leq \eps_K\leq C(Q^*_K)^{d-1}$ for some finite constant $C$ independent of $K$.
For $K$ large enough, we have 
\begin{align*}
&\dfrac{\left| \lambda\left( (Q^*_{j-1}(K))^{d-1}q_{j-1}(t)-(Q^*_{j}(K))^{d-1}q_j(t) \right)-\left(\lambda^{d^{j-1}}q_{j-1}(t)-\lambda^{d^j}q_j(t) \right) \right| }{\sqrt{Q^*_j-Q^*_{j+1}}}\\
& \leq  4 \frac{|\lambda  (Q^*_{j-1}(K))^{d-1}- \lambda(Q^*_{j-1})^{d-1}|}{\sqrt{Q^*_j-Q^*_{j+1}}}\leq 4 (d-1) \lambda \frac{|Q^*_{j-1}(K)- Q^*_{j-1}|}{\sqrt{Q^*_j-Q^*_{j+1}}} \\
&\leq 8 (d-1) \lambda \frac{|Q^*_{j-1}(K)-  Q^*_{j-1}|}{\sqrt{Q^*_K}}\leq  8 (d-1) C  \lambda (Q^*_K)^{d-1/2}\quad 1\leq j\leq K,
\end{align*}
where the last inequality follows from Lemma \ref{lm:Q.buffer}. Then by denoting the numerators in the squares of the first $K-1$ integrals on the right-hand side of \eqref{eq:QqK} (resp. \eqref{eq:QKqK}) as $\{ A_i\}_{1\leq i\leq K-1}$ (resp. as $\{ \tilde A_i\}_{1\leq i\leq K-1}$), we have for $K$ large enough that 
\begin{align}
\frac{|A_j-\tilde A_j|}{\sqrt{Q^*_j-Q^*_{j+1}}}\leq 8 d C \lambda (Q^*_K)^{d-1/2}=: \tilde \eps_K\quad 
1\leq j\leq K-1. \label{eq:lmqK.1}
\end{align}
By writing
\begin{align*}
\frac{\tilde{A}_j (t) }{\sqrt{ Q^*_j(K)-Q^*_{j+1}(K)} } = \left( \frac{ A_j(t) }{ \sqrt{Q^*_j-Q^*_{j+1}} } + \frac{ (\tilde A_j(t)-A_j(t)) }{ \sqrt{Q^*_j-Q^*_{j+1}} } \right)\frac{ \sqrt{Q^*_j-Q^*_{j+1}} }{ \sqrt{ Q^*_j(K)-Q^*_{j+1}(K) } }
\end{align*}
and combining with \eqref{eq:lmqK.0} and \eqref{eq:lmqK.1}, we have for $K$ large enough that 
\begin{align*}
&\left| \left(\frac{A_j(t)}{\sqrt{Q^*_j-Q^*_{j+1} }} \right)^2 -\left(  \frac{\tilde A_j(t)}{ \sqrt{Q^*_j(K)-Q^*_{j+1}(K) }} \right)^2  \right| \\
&\leq   \left[ \eps_K \frac{A_j(t)}{\sqrt{Q^*_j-Q^*_{j+1} }} +  \tilde\eps_K \left( 1+ \eps_K\right) \right]
\cdot   \left[ 2 \frac{A_j(t)}{\sqrt{Q^*_j-Q^*_{j+1} }} + \eps_K \frac{A_j(t)}{\sqrt{Q^*_j-Q^*_{j+1} }} +  \tilde\eps_K \left( 1+ \eps_K\right) \right] \\
& = \eps_K (2+\eps_K)\left(\frac{A_j(t)}{\sqrt{Q^*_j-Q^*_{j+1} }} \right)^2 +2\tilde \eps_K (1+ \eps_K)^2 \frac{A_j(t)}{\sqrt{Q^*_j-Q^*_{j+1} }}+\tilde \eps_K^2 (1+\eps_K)^2\\
&\leq [\eps_K(2+\eps_K)+\tilde \eps_K] \left(\frac{A_j(t)}{\sqrt{Q^*_j-Q^*_{j+1} }} \right)^2 +\tilde\eps_K\left[ \tilde \eps_K (1+\eps_K)^2+(1+\eps_K)^4 \right] \quad \text{for}\; 1\leq j\leq K.
\end{align*}
Notice that both $\eps_K$ and $K\tilde\eps_K$ tend to 0 as $K\to\infty$. Then taking the difference between \eqref{eq:QqK} and \eqref{eq:QKqK} and combining with the above inequality yields
\begin{align*}
&\left| I^{Q^*}(Q^K) - I^{Q^*(K)}(Q^K)\right|\\
& \leq \sum_{1\leq j\leq K-1} \int_{[0,T]} \left| \left(\frac{A_j(t)}{\sqrt{Q^*_j-Q^*_{j+1} }} \right)^2 -\left(  \frac{\tilde A_j(t)}{ \sqrt{Q^*_j(K)-Q^*_{j+1}(K) }} \right)^2  \right|  dt \\
&\quad +\left| \left(\frac{A_K(t)}{\sqrt{Q^*_K-Q^*_{K+1} }} \right)^2 -\left(  \frac{\tilde A_K(t)}{ \sqrt{Q^*_K(K)}} \right)^2  \right|+  \int_{[0,T]}\frac{d^2\left(\lambda^{d^{K}}q_{K}(t) \right)^2}{2(Q_{K+1}^*-Q_{K+2}^*)} dt\\
& \leq [\eps_K(2+\eps_K)+\tilde \eps_K] I^{Q^*}(q) + TK\tilde\eps_K\left[ \tilde \eps_K (1+\eps_K)^2+(1+\eps_K)^4 \right]\\
& \qquad +  \int_{[0,T]}\frac{d^2\left(\lambda^{d^{K}}q_{K}(t) \right)^2}{2(Q_{K+1}^*-Q_{K+2}^*)} dt\\
&\to 0 \text{ as } K \to \infty,
\end{align*}
which completes the proof.
\end{proof}

Given $q\in \C([0,T]: l^2)$, we say $q$ belongs to family $\Ac$ if there exists a finite disjoint partition $[0,T] = \cup_{i=1}^n E_i$ for some $n\in \N$ such that all $q_k$ are monotone on each $E_i$, we say $q$ belongs to family $\Bc$ if there exists a countable disjoint partition $[0,T] = \cup_{i=1}^\infty E_i$ such that on each subinterval $E_i = [a_i,b_i]$, 
the signs of $q_k(s)-q_k(a_i)$ and $\int_{a_i}^s q_k(t)dt$ are the same and do not change over $s \in E_i$.

The following provides the convergence of rate functions restricted to families $\Ac$ and $\Bc$.

\begin{Proposition}
\label{prop:convergence-I}
Given $q\in \Ac\cup \Bc$ with $I^{Q^*}(q)<\infty$, then
$$
\lim_{K\to\infty} I^{Q^*(K)}(q^K) = I^{Q^*}(q).
$$
\end{Proposition}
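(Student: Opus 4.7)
The plan is to invoke Lemma \ref{lm:qK}, which reduces the desired convergence $I^{Q^*(K)}(q^K)\to I^{Q^*}(q)$ to the single scalar limit
\[
J_K \;:=\; \int_{[0,T]} \frac{\bigl|q_K'(t) + q_K(t) - q_{K+1}(t)\bigr|^2}{2 Q_K^*}\,dt \;\longrightarrow\; 0.
\]
I would then perform a simple three-term decomposition. Writing
\[
D_K(t) := d\bigl(\lambda^{d^{K-1}}q_{K-1}(t) - \lambda^{d^K}q_K(t)\bigr) - (q_K(t) - q_{K+1}(t))
\]
for the drift appearing in the $K$-th summand of the series representation of $I^{Q^*}(q)$ derived in the proof of Lemma \ref{lm:qK}, a direct algebraic rearrangement yields
\[
q_K' + q_K - q_{K+1} \;=\; \bigl(q_K' - D_K\bigr) + d\lambda^{d^{K-1}} q_{K-1} - d\lambda^{d^K} q_K.
\]
Applying $(a+b+c)^2 \leq 3(a^2+b^2+c^2)$ and integrating splits $J_K$ into three pieces, each to be handled separately.

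For the first piece, I would use that $Q_{K+1}^*/Q_K^* = \lambda^{d^K}\to 0$, so $Q_K^*/(Q_K^* - Q_{K+1}^*)\to 1$; this piece is then comparable to the $K$-th summand of the convergent series $I^{Q^*}(q) = \sum_{j\ge 1}\int_{[0,T]} (q_j' - D_j)^2 /(2(Q_j^* - Q_{j+1}^*))\,dt$, and hence vanishes as $K\to\infty$. For the second piece, the bound $\lambda^{d^{K-1}}/\sqrt{Q_K^*} \leq \sqrt{\lambda}$ from \eqref{eq:lm.qQterm} reduces it to a constant multiple of $\int_0^T q_{K-1}(t)^2\,dt$, which tends to zero because continuity of $q:[0,T]\to l^2$ makes $\{q(t):t\in [0,T]\}$ a compact subset of $l^2$, and compact subsets of $l^2$ have uniformly summable tails, so $\sup_{t\in[0,T]}|q_k(t)|\to 0$. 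For the third piece, the direct exponent computation $\lambda^{2d^K}/Q_K^* = \lambda^{[(2d-3)d^K + 1]/(d-1)}\to 0$ coupled with $\|q_K\|_\infty \leq \sup_t \|q(t)\|_{l^2}$ finishes the argument.

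One conceptual point worth flagging is that this route uses only $I^{Q^*}(q)<\infty$ together with the super-exponential decay of $Q_K^*$, and in particular it does not invoke the monotonicity or sign structure that defines $\Ac$ or $\Bc$; the restriction to $\Ac\cup\Bc$ in the statement is thus not strictly necessary for the decomposition-based proof sketched here, and presumably reflects the authors' preference for a more variation-theoretic estimate. The only real subtlety to be careful about is in the second piece: pointwise $q_{K-1}(t)^2\to 0$ is trivial, but uniformity in $t$ is needed to integrate, which is exactly where compactness of $q([0,T])$ in $l^2$ (and not merely continuity) enters. The main obstacle, if one insisted on the authors' framework, would be to re-derive this uniform decay from an integration-by-parts calculation on the subintervals in the definition of $\Ac$ or $\Bc$, and to show that the variation/sign structure there delivers the same control of $\int q_{K-1}^2\,dt$ against $Q_K^*$.
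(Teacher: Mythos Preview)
Your argument is correct, and in fact it is both simpler and more general than the paper's own proof. The paper proceeds quite differently: for $q\in\Ac$ it sets up a maximal time $T_0$ on which $\sum_j \sup_{s\le T_0} q_j(s)^2/Q_j^* < \infty$ and uses the piecewise monotonicity of the $q_k$ (together with a telescoping argument exploiting the doubly-exponential decay of $Q_j^*$) to push $T_0$ forward by a fixed amount, deriving a contradiction if $T_0<T$; for $q\in\Bc$ it uses the sign coherence on each subinterval $E_i$ to bound $|q_K(s)-q_K(a_i)|+|\int_{a_i}^s q_K|$ by a weighted tail sum that is shown to vanish via Cauchy--Schwarz. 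In both cases the paper is effectively fighting for a weighted decay estimate of the form $q_K^2/Q_K^* \to 0$ or at least control of $q_K$ relative to $(Q_K^*)^p$.

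Your key observation is that no such weighted estimate is needed: after peeling off the $K$-th summand of the convergent series $I^{Q^*}(q)$, the residual terms carry the factors $\lambda^{d^{K-1}}/\sqrt{Q_K^*}$ and $\lambda^{d^K}/\sqrt{Q_K^*}$, which are uniformly bounded (by \eqref{eq:lm.qQterm}), so one only needs $\sup_{t\in[0,T]}|q_{K-1}(t)|\to 0$ and $\sup_{t}|q_{K}(t)|\to 0$ with no weight at all. This follows from the standard fact that compact subsets of $l^2$ have uniformly vanishing tails, applied to $q([0,T])$. This route bypasses the families $\Ac,\Bc$ entirely and, as you note, actually proves the convergence for every $q$ with $I^{Q^*}(q)<\infty$, thereby resolving (for the equilibrium initial condition $Q^*$) the first open question posed in the paper's concluding remark.
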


\begin{proof}
Take $q\in \C([0,T]: l^2)$ with $I^{Q^*}(q)<\infty$. By Lemma \ref{lm:qK}, it suffices to prove
\bee
\lim_{K\to\infty} \int_{[0,T]}\frac{\left| q'_K(t)+(q_K (t)-q_{K+1}(t)) \right|^2}{2Q_{K}^*} dt = 0.
\eee 
for $\Ac$ and $\Bc$ separately.

\underline{For $q\in \Ac$:} Define 
\be\label{eq:eg.prop1} 
T_0:=\sup\left\{ t\leq T: \sum_{j=1}^\infty \frac{\max_{0\leq s\leq t}q^2_j(s)}{Q^*_j}<\infty \right\}.
\ee 
Since $I(q)<\infty$, above also implies that  $$\sum_{j=1}^\infty\frac{\max_{0\leq s\leq T_0}(q'_j(s))^2}{Q^*_j}<\infty.$$
If $T_0=T$, then the desired result is achieved. Suppose $T_0<T$. Then consider 
\be\label{eq:eg.prop3}  
\delta:=\sup\{0\leq s\leq T-T_0: \text{ all $q_k$ are monotone on $[T_0, T_0+s]$}\}.
\ee 
Since $q\in \Ac$, we must have $0<\delta\leq T-T_0$. Set $\bar q_{j}(t) = q_j(t)-q_j(T_0)$ on $[T_0, T_0+\delta]$ for all $j\in \N$. By \eqref{eq:eg.prop1} and \eqref{eq:eg.prop3}, there exists a sequence $\{c_j\}_{j\in \N}$ with $\sum_{j\in \N} c_j^2<\infty$ such that 
\bee
\begin{aligned}
|q_j'(t)+q_j(t)-q_{j+1}(t)|- 2  c_{j} \sqrt{Q^*_j}\leq &   |\bar q_K'(t)+\bar q_j(t)-\bar q_{j+1}(t)|\\
	\leq & |q_j'(t)+ q_j(t)- q_{j+1}(t)| +2 c_{j} \sqrt{Q^*_j} \quad \forall t\in[T_0, T_0+\delta].
 \end{aligned}
\eee 
Then
\be\label{eq:eg.prop5}
	\sum_{j=0}^\infty \int_{T_0}^{T_0+\delta} \frac{|\bar q_j'(t)+\bar q_j(t)-\bar q_{j+1}(t)|^2}{2Q^*_j} dt
	\leq 2	\sum_{j=0}^\infty\int_{[0,T]} \left(\frac{|q_j' (t)+q_j(t)-q_{j+1}(t)|^2}{2Q^*_j}+2  c^2_{j}\right) dt<\infty.
\ee
Thus, we find another sequence $\{\tilde c_j\}_{j\in \N}$ with $\sum_{j\in \N} \tilde c_j^2<\infty$ such that $|\bar q_j'(t)+\bar q_j(t)-\bar q_{j+1}(t)|=\tilde c_j\sqrt{2Q^*_j}$.
Since $Q_K^*$ is doubly exponentially decaying, we choose $K_0$ large enough so that
$$\sum_{k=K}^\infty \tilde c_k \sqrt{2Q_k^*} \le 
2(\sup_{k \ge K}\tilde c_k)
\sqrt{2Q_K^*} \leq (\tilde c_K+\lambda^{d^{K-1}/2})\sqrt{6Q^*_K}
$$
for all $K \ge K_0$.

Now we analyse $\bar q(t)$ on $[T_0, T_0+\delta]$. Notice that $\bar q_j(t)$ start with $0$ at $t=T_0$ and $\bar q_j(t), \bar q_j'(t)$ have the same sign on $[T_0, T_0+\delta]$. Take an arbitrary $K\geq K_0$. If $\bar q_j(t)$ has the same sign as $\bar q_K(t)$ on $[T_0, T_0+\delta]$ for all $j > K$, then by \eqref{eq:eg.prop5}, we can take sequence $\{a_j\}_{j\geq K}$ with $|a_j|=\tilde c_j$ such that 
\begin{equation}
	\label{eq:q_K_cvg_1}
	\bar q_j(T_0+\delta)+\int_{[T_0,T_0+\delta]}\bar q_j(t)dt -\int_{[T_0,T_0+\delta]}\bar q_{j+1}(t)dt = a_j \sqrt{2 Q_{j}^*}\quad \forall j\geq K.
\end{equation}
Taking sum gives that 
\begin{align}
&\sum_{j=K}^\infty 	\left| \bar q_j(T_0+\delta) \right| +\int_{[T_0,T_0+\delta]} |\bar q_K(t)|dt \leq  \sum_{j=K}^\infty |a_j| \sqrt{2Q^*_j}\leq 2(\sup_{j \ge K}|a_j|)\sqrt{2Q^*_K} \notag\\
&\leq  (\tilde c_K+\lambda^{d^{K-1}/2})\sqrt{6Q^*_K} \quad \text{for $K\geq K_0$ large enough}, \label{eq:eg.1} 
\end{align}
If $\bar q_j(t)$ has the opposite sign as $q_K(t)$ for some $j > K$, then let $$n := \min\{j> K: q_j(t) \text{ has the opposite sign as } q_K(t)\}.$$
Taking sums in \eqref{eq:q_K_cvg_1} for $K\leq j\leq n-1$ gives
$$\sum_{j=K}^{n-1} \bar q_j(T_0+\delta) + \int_{[T_0,T_0+\delta]}\bar q_K(t)dt - \int_{[T_0,T_0+\delta]}\bar q_{n}(t)dt = \sum_{j=K}^{n-1} a_j \sqrt{2 Q_{j}^*}.$$
By the definition of $n$, the left three terms (including the negative sign) have the same sign.
Therefore
\begin{align*}
&\int_{[T_0,T_0+\delta]} |q'_K(t)|dt=|q_K(T)| \le \sum_{j=K}^{n-1} |a_j| \sqrt{2 Q_{j}^*} \le 
2(\sup_{j \ge K}c_j) \sqrt{2 Q_{j}^*} \\
&\leq  (\tilde c_K+\lambda^{d^{K-1}/2})\sqrt{6Q^*_K} \quad \text{for $K\geq K_0$ large enough}
\end{align*}

Notice that $\sum_{j\in \N} (\tilde c_K+\lambda^{d^{K-1}/2})^2<\infty$. Hence, no matter which the case it is, we have either $\sum_{j\in \N} \int_{T_0}^{T_0+\delta}\frac{|\bar q_j(t)|^2}{Q^*_j} dt<\infty$ or $\sum_{j\in \N} \int_{T_0}^{T_0+\delta}\frac{|\bar q'_j(t)|^2}{Q^*_j} dt<\infty$, which implies that 
$$\text{Both }\sum_{j=0}^\infty \int_{T_0}^{T_0+\delta}\frac{|\bar q_j(t)|^2}{Q^*_j} dt<\infty \text{ and } \sum_{j=0}^\infty \int_{T_0}^{T_0+\delta}\frac{|\bar q'_j(t)|^2}{Q^*_j} dt<\infty.$$
Thus, $T_0+\delta$ also satisfies the definition of $T_0$ in \eqref{eq:eg.prop1}, which is a contradiction. Hence, $T_0=T$.

\underline{For $q\in \Bc$:} 
	Suppose there exists a countable partition $[0,T] = \cup_{i=1}^\infty E_i$ such that on each subinterval $E_i = [a_i,b_i]$, 
	the signs of $q_k(s)-q_k(a_i)$ and $\int_{a_i}^s q_k(t)dt$ are the same and do not change over $s \in E_i$.
	Note that for each $E_i$ and $s \in E_i$, for any fixed $p \in (0,\frac{1}{2})$,
	\begin{align*}
		\left| q_K(s)-q_K(a_i) + \int_{a_i}^{s}q_K(t)dt - \int_{a_i}^{s}q_{K+1}(t)dt \right|
		& \le (2 Q_{K}^*)^p \int_{a_i}^{s} \frac{\left| q'_K(t)+q_K(t) -q_{K+1}(t)  \right|}{(2 Q_{K}^*)^p}dt.
	\end{align*}
	Therefore
	\begin{align*}
		& \left| \sum_{k=K}^{n-1} (q_k(s)-q_k(a_i)) + \int_{a_i}^{s}q_K(t)dt - \int_{a_i}^{s}q_n(t)dt \right| \\
		& \le \sum_{k=K}^{n-1} (2 Q_{k}^*)^p \int_{a_i}^{s} \frac{\left| q'_k(t)+q_k(t) -q_{k+1}(t)  \right|}{(2 Q_{k}^*)^p}dt \\
		& \le (2 Q_{K}^*)^p \sum_{k=K}^\infty \int_{a_i}^{s} \frac{\left| q'_k(t)+q_k(t) -q_{k+1}(t)  \right|}{(2 Q_{k}^*)^p}dt.
	\end{align*}
	So the above arguments implies that, for any $K \ge 1$,
	\bee\label{eq:case1} 
	|q_K(s)-q_K(a_i)| + \left|\int_{a_i}^{s}q_K(t)dt\right| \le (2 Q_{K}^*)^p \sum_{k=K}^\infty \int_{a_i}^{s} \frac{\left| q'_k(t)+q_k(t) -q_{k+1}(t)  \right|}{(2 Q_{k}^*)^p}dt.
	\eee 
	This means, for every $s \in [0,T]$
	$$|q_K(s)| + \left|\int_0^s q_K(t)dt\right| \le (2 Q_{K}^*)^p \sum_{k=K}^\infty \int_0^T \frac{\left| q'_k(t)+q_k(t) -q_{k+1}(t)  \right|}{(2 Q_{k}^*)^p}dt.$$
	Note that the right-hand side of the above inequality vanishes as $K \to \infty$:
	\begin{align}
		& \sum_{k=K}^\infty \int_0^T \frac{\left| q'_k(t)+q_k(t) -q_{k+1}(t)  \right|}{(2 Q_{k}^*)^p}dt \notag \\
		& = \sum_{k=K}^\infty \int_0^T \frac{\left| q'_k(t)+q_k(t) -q_{k+1}(t)  \right|}{\sqrt{2 Q_{k}^*}} (2 Q_{k}^*)^{1/2-p}dt \notag \\
		& \le \left(\sum_{k=K}^\infty \int_0^T \frac{\left| q'_k(t)+q_k(t) -q_{k+1}(t)  \right|^2}{2 Q_{k}^*}dt\right)^{1/2} \left(\sum_{k=K}^\infty \int_0^T (2 Q_{k}^*)^{1-2p}dt\right)^{1/2} \notag \label{eq:case2},
	\end{align}
    where the first term in the last line above is the square root of the tail of $I^{Q^*}(q)$ and the second term is finite. 
    The result follows.
\end{proof}

\begin{Remark}
We note that $\Ac\setminus \Bc, \Bc\setminus \Ac, (\Ac\cup \Bc)^c$ are all non-empty.
Take a sequence $\{c_j\}_{j\in \N}$ such that $\sum_{j\in \N}c^2_j<\infty$ and let $T=2$.
\bi   
\item The following $q$ with $I^{Q^*}(q)<\infty$ satisfies $q\in \Ac\setminus \Bc$:
$$
q_j(t) = c_j\sqrt{Q^*_j} (1-\frac{1}{j})t, \; t\in[0,1]; \quad  q_j(t) = c_j\sqrt{Q^*_j} (1-\frac{1}{j}-t), \; t\in[1,2].
$$
\item The following $q$ with $I^{Q^*}(q)<\infty$ satisfies $q\in \Bc\setminus \Ac$:
$$
q_j(t) = c_j\sqrt{Q^*_j} t, \; t\in[0,1/j]; \;\;  q_j(t) = c_k\sqrt{Q^*_j} (1/j-t),\; t\in[1/j, 2/j]; \;\; q_j(t)=0, \; t\in[2/k, 2].
$$
\item The following $q$ with $I^{Q^*}(q)<\infty$ satisfies $q\notin \Ac\cup \Bc$:
$$
q_j(t) = c_j\sqrt{Q^*_j} t, \; t\in[0,1/j]; \quad  q_j(t) = c_j\sqrt{Q^*_j} (1/j-t), \; t\in[1/j, 2].
$$
Indeed, we can see that for each $q_j$, there can not be a partition point on the interval $[1/j, 2/j]$, then $(0,1/4)\subset \cup_j [1/j, 2/j]$ tells that no partition points on $(0,1/4)$. Hence, $q\notin \Ac\cup \Bc$.
\ei 
\end{Remark}

\begin{Remark}
    The result in Proposition \ref{prop:convergence-I} is only a case study, which is already non-trivial due to the non-linear dependence of the rate function $I^{Q^*}(q)$ on the infinite dimensional LLN trajectory $Q^*$ and the given trajectory $q$.
    It remains an open and interesting question whether 
    $$
    \lim_{K\to\infty} I^{Q^*(K)}(q^K) = I^{Q^*}(q)
    $$
    for all $q$ with $I^{Q^*}(q)<\infty$, and whether it holds for initial conditions different from the equilibrium state $Q^*$.
\end{Remark}
 
\appendix

\section{Alternative proof of Corollary \ref{cor:mu-MDP}}
\label{sec:exp-equiv}

In this section we give an alternative proof of Corollary \ref{cor:mu-MDP} without using Theorem \ref{thm:MDP}, as mentioned in Remark \ref{rmk:exp-equiv}, via an exponential equivalence argument combined with the MDP result in \cite{BudhirajaWu2017moderate}. 

Here we extend $l^2$ by adding the $0$-th component, and let $\mu^n_i(t)$ be the proportion of length $i$ at time $t$ for all $i\geq 0$. Let $\ebd_i$ denote the $i$-th unit vector for all $i=0,1,2,\dotsc$ We will make use of Girsanov's Theorem and estimate the quadratic variation associated with the change-of-measure density to show exponential equivalence (see e.g.\ \cite[Lemma 4.1]{CoppiniDietertGiacomin2019law} for its application in the case of mean-field type interacting diffusions). 

Define for $i\geq 0$ that
\be\label{eq:tildeRn}  
\begin{aligned}
\tilde R^n_i(\tilde\eta):=  \left[
\left(  \begin{matrix}
	n\left(\sum_{j\geq i}\tilde\eta_j \right)\\
	d
\end{matrix}\right)
-
\left( \begin{matrix}
	n\left(\sum_{j\geq i+1}\tilde\eta_j \right)\\
	d
\end{matrix}\right)
\right]\bigg/ 
\left(  \begin{matrix}
	n\\
	d
\end{matrix}\right), 
\tilde R_i(\tilde\eta):=  \left(\sum_{j\geq i}\tilde\eta_j \right)^d-\left(\sum_{j\geq i+1}\tilde\eta_j \right)^d,  
\end{aligned}
\ee
and let
\be\label{eq:tildeGn}  
\begin{aligned}
\tilde G^n(\tilde\eta,y) :=&\sum_{i \ge 0}  \tilde G^n_i(\tilde\eta, y) \ebd_i :=
 \left\{-\one_{[0, \lambda \tilde R^n_{0}(\tilde\eta))}(y)   +\one_{[\lambda_0, \lambda_0 + \tilde\eta_{1}  )}(y) \right\} \ebd_0\\
& \qquad\qquad \qquad \qquad + \sum_{i\geq 1} \bigg\{ \one_{[2(i-1)\lambda_0, 2(i-1)\lambda_0+\lambda \tilde R^n_{i-1}(\tilde\eta))}(y)-\one_{[2i\lambda_0, 2i\lambda_0+\lambda \tilde R^n_{i}(\tilde\eta))}(y)   \\
& \qquad\qquad \qquad \qquad \qquad\quad - \one_{[(2i-1)\lambda_0, (2i-1)\lambda_0 + \tilde\eta_i  )}(y) +\one_{[(2i+1)\lambda_0, (2i+1)\lambda_0 + \tilde\eta_{i+1}  )}(y) \bigg\} \ebd_i,\\
\tilde G(\tilde\eta,y) :=& \sum_{i \ge 0}  \tilde G_i(\tilde\eta, y) \ebd_i := \left\{-\one_{[0, \lambda \tilde R_{0}(\tilde\eta))}(y)   +\one_{[\lambda_0, \lambda_0 + \tilde\eta_{1}  )}(y) \right\} \ebd_0\\
& \qquad\qquad \qquad \qquad + \sum_{i\geq 1} \bigg\{ \one_{[2(i-1)\lambda_0, 2(i-1)\lambda_0+\lambda \tilde R_{i-1}(\tilde\eta))}(y) - \one_{[2i\lambda_0, 2i\lambda_0+\lambda \tilde R_{i}(\tilde\eta))}(y)  \\
& \qquad \qquad \qquad \qquad \qquad\quad - \one_{[(2i-1)\lambda_0, (2i-1)\lambda_0 + \tilde\eta_i)}(y) + \one_{[(2i+1)\lambda_0, (2i+1)\lambda_0 + \tilde\eta_{i+1})}(y) \bigg\} \ebd_i.
\end{aligned}
\ee  
 Then $\mu^n$ follows the dynamic
 \be\label{eq:JSQd.mun} 
\begin{aligned}
	\mu^n(t) & = \mu^n(0) +\frac{1}{n} \int_{\X_t} \tilde G^n(\mu^n(s-),y) \, N^{n}(dy\, ds) \\
	& = \mu^n(0) + \int_{[0,t]}\tilde b^n(\mu^n(s))ds+ \frac{1}{n} \int_{\X_t} \tilde G^n(\mu^n(s-),y) \, \tilde N^{n}(dy\, ds) 
\end{aligned}
\ee 
with
$
\tilde b^n(\tilde\eta):=\sum_{i\geq 0}\tilde b^n_i(\tilde \eta) \ebd_i:= \left(-\lambda\tilde R^n_{0}(\tilde\eta)+ \tilde\eta_{1} \right)\ebd_0+\sum_{i \ge 1} \left( \lambda \tilde R^n_{i-1}(\tilde\eta)-\lambda\tilde R^n_{i}(\tilde\eta)-\tilde\eta_i + \tilde\eta_{i+1}\right) \ebd_i
$.

We consider the following auxiliary model 
 \be\label{eq:JSQd.mun'} 
\begin{aligned}
	\bar\mu^n(t) & = \bar\mu^n(0) +\frac{1}{n} \int_{\X_t} \tilde G(\bar \mu^n(s-),y) \, N^{n}(dy\, ds) \\
	& = \bar\mu^n(0) + \int_{[0,t]}\tilde b(\bar\mu^n(s))ds+ \frac{1}{n} \int_{\X_t} \tilde G(\bar \mu^n(s-),y) \, \tilde N^{n}(dy\, ds) 
\end{aligned}
\ee 
with
$
\tilde b(\tilde\eta):= \sum_{i\geq 0}\tilde b_i(\tilde \eta) \ebd_i := \left(-\lambda\tilde R_{0}(\tilde\eta)+ \tilde\eta_{1} \right)\ebd_0+ \sum_{i\geq 1} \left( \lambda \tilde R_{i-1}(\tilde\eta)-\lambda\tilde R_i(\tilde\eta)-\tilde\eta_i +\tilde\eta_{i+1}\right) \ebd_i
$ and $\bar \mu^n(0)= \mu^n(0)$.
Notice that $\tilde b(\tilde \eta, y) =\left(1, b(\eta, y)\right)$ for all $y$ whenever $\eta_i=\sum_{j\geq i}\tilde \eta_i, \forall i.$
Then by \cite[Theorem 3.2]{BudhirajaWu2017moderate}, $\{a(n)\sqrt{n}(\bar\mu^n-\mu)\}$ satisfies a LDP with speed $a^2(n)$ with rate function $\tilde I$ defined in \eqref{cor:mu-MDP}.

Now we adapt the argument in \cite[Lemma 4.1]{CoppiniDietertGiacomin2019law} to show exponential equivalence between $\mu^n$ and $\bar \mu^n$, which leads to the desired result. 
By \eqref{eq:tildeGn}---\eqref{eq:JSQd.mun'}, consider the Radon-Nikodym derivative (associated with $\mu^n$ and $\bar \mu^n$)
$$
\Ec_T  
:=\exp\left[\sum_{i=0}^\infty\int_{[0,\infty)\times[0,T]}\log \frac{\tilde R^n_i(\bar \mu^n(s-))}{\tilde R_i(\bar \mu^n(s-))} 
	\one_{[2i\lambda_0, 2i\lambda_0+\lambda \tilde R_i(\bar \mu^n(s-)))}(y) N(dsdy) \right].
$$
Note that there is no time integration in the exponent because the sum of rates appearing in \eqref{eq:tildeGn} are the same:
$$
\sum_{i\geq 0} \left( \lambda \tilde R^n_i(\tilde\eta) + \tilde\eta_i \right) = \lambda + \sum_{i\geq 0} \tilde\eta_i = \sum_{i\geq 0} \left( \lambda \tilde R_i(\tilde\eta) + \tilde\eta_i \right) \quad \forall \tilde \eta.
$$
Write $Y^n=a(n)\sqrt{n}(\mu^n-\mu)$ and $\Ybar^n=a(n)\sqrt{n}(\bar\mu^n-\mu)$.
Then given a closed set $A \subset \D([0,T]: l^2)$, it follows from Girsanov's Theorem and Holder's inequality (with $1/p+1/q=1$) that
\begin{equation*}
	\P(Y^n \in A) = \E [1_{\{\Ybar^n \in A\}} \Ec_T] \le [\P(\Ybar^n \in A)]^{\frac{1}{p}} [\E(\Ec_T^q)]^{\frac{1}{q}}.
\end{equation*}
For the rest part of this proof, we write $R^n_i(s-)$ and $R_i(s-)$ short for $\tilde R^n_i(\bar \mu^n(s-))$ and  $\tilde R_i(\bar \mu^n(s-))$ respectively, and omit writing $\one_{\{R_i(s)>0\}}$ when $R_i(s)$ appears in the denominator.
Also write $B_i(s-)=[2i\lambda_0, 2i\lambda_0+\lambda \tilde R_i(\bar \mu^n(s-)))$.
By Cauchy-Schwarz inequality,
\begin{align*}
    \E(\Ec_T^q) & = \E\left[\exp\left(\sum_{i=0}^\infty\int_{[0,\infty)\times[0,T]}\log \frac{( R^n_i(s-) )^q}{(R_i(s-))^q} \one_{B_i(s-)}(y) N(dsdy) \right) \right] \\
	& = \E\bigg\{ \exp\left[\frac{1}{2} \sum_{i=0}^\infty \int_0^T \left( \frac{(R^n_i(s))^{2q}}{(R_i(s))^{2q-1}} - R_i(s) \right)ds\right]\\
	&\qquad  \cdot 	\exp\bigg[ -\frac{1}{2} \sum_{i=0}^\infty \int_0^T \left( \frac{(R^n_i(s))^{2q}}{(R_i(s))^{2q-1}} - R_i(s) \right)ds \\
    & \qquad \qquad +\sum_{i=0}^\infty\int_{[0,\infty)\times[0,T]}\log \frac{( R^n_i(s-) )^q}{(R_i(s-))^q} \one_{B_i(s-)}(y) N(dsdy) \bigg]  \bigg\}\\
	&\le \left\{\E\bigg[ \exp\left( \sum_{i=0}^\infty \int_0^T \left( \frac{(R^n_i(s))^{2q}}{(R_i(s))^{2q-1}} - R_i(s) \right)ds\right) \right\}^{1/2}\\
	&\qquad \cdot \bigg\{  
	\E\bigg[\exp\bigg( -\sum_{i=0}^\infty \int_0^T \left( \frac{(R^n_i(s))^{2q}}{(R_i(s))^{2q-1}} - R_i(s) \right)ds\\
	&\qquad \qquad\qquad +\sum_{i=0}^\infty\int_{[0,\infty)\times[0,T]}\log \frac{(R^n_i(s-))^{2q}/(R_i(s-))^{2q-1}}{R_i(s-)} 
	\one_{B_i(s-)}(y) N(dsdy) \bigg)  \bigg]
	\bigg\}^{1/2}
\end{align*}
Here the last expectation in the last line corresponds to a Radon-Nikodym derivative and hence is $1$. Therefore,
$$
\P(Y^n \in A)\leq [\P(\Ybar^n \in A)]^{\frac{1}{p}} \left\{\E\left[ \exp\left( \sum_{i=0}^\infty \int_0^T \left( \frac{(R^n_i(s))^{2q}}{(R_i(s))^{2q-1}} - R_i(s) \right)ds\right) \right]\right\}^{1/2q}.
$$
In particular, we have that for any $p,q$ with $1/p+1/q=1$,
\begin{align*}
& \limsup_{n\to\infty} a^2(n) \log \P(Y^n \in A) \\
& \le - \frac{1}{p} \inf_{x\in A} \Itil(x) + \frac{1}{2q}	\limsup_{n\to\infty} a^2(n) \log \E\left[ \exp\left( \sum_{i=0}^\infty \int_0^T \left( \frac{(R^n_i(s))^{2q}}{(R_i(s))^{2q-1}} - R_i(s) \right)ds\right) \right] \le - \frac{1}{p} \inf_{x\in A} \Itil(x),
\end{align*}
where the second inequality follows from Lemma \ref{lm:Rnbound} below. Since this holds for arbitrary $p>1$, we conclude that 
$$
\limsup_{n\to\infty} a^2(n) \log \P(Y^n \in A)\le- \inf_{x\in A} \Itil(x).
$$
A similar argument shows that 
$$
\liminf_{n\to\infty} a^2(n) \log \P(Y^n \in A)\ge- \inf_{x\in A} \Itil(x),\quad \text{for any open set } A \subset \D([0,T]: l^2).
$$
This completes the proof. \qed

\begin{Lemma}\label{lm:Rnbound}
For any $q > 1$, we have that
$$
\lim_{n\to\infty} a^2(n) \log \E\left[ \exp\left( \sum_{i=0}^\infty \int_0^T \left( \frac{(R^n_i(s))^{2q}}{(R_i(s))^{2q-1}} - R_i(s) \right)ds\right) \right]=0
$$
\end{Lemma}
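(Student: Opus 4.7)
The plan is to prove the deterministic uniform bound
\begin{equation*}
S_n := \sum_{i=0}^\infty \int_0^T \left(\frac{(R^n_i(s))^{2q}}{R_i(s)^{2q-1}} - R_i(s)\right) ds \;\le\; (2^{2qd}-1)\,T \quad\text{for all } n \ge 2d,
\end{equation*}
after which $\E[\exp(S_n)] \le e^{(2^{2qd}-1)T}$ gives $a^2(n)\log\E[\exp(S_n)] \le a^2(n)(2^{2qd}-1)T \to 0$ immediately from $a^2(n)\to 0$. The central estimate is a uniform multiplicative comparison $R^n_i \le 2^d R_i$, which together with the telescoping identity $\sum_{i\ge 0}R_i = 1$ reduces the problem to an elementary computation.

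To establish the ratio bound, I would first observe that $\bar\mu^n$ lives on the lattice $\frac{1}{n}\Z_+^{\N}$: since $\bar\mu^n(0)\in\Smchat^n$ and each atom of the driving PRM in \eqref{eq:JSQd.mun'} produces a jump of size $\pm\frac{1}{n}\ebd_j$ (with rates arranged so non-negativity is preserved, as both the service interval $[(2j-1)\lambda_0,(2j-1)\lambda_0+\bar\mu^n_j)$ and the arrival interval $[2j\lambda_0,2j\lambda_0+\lambda\tilde R_j)$ have zero length whenever $\bar\mu^n_j=0$), $n\bar\mu^n(s)$ remains integer-valued for every $s$. Setting $m_1 := n\sum_{j\ge i}\bar\mu^n_j(s)$ and $m_2 := n\sum_{j\ge i+1}\bar\mu^n_j(s)$, we have $0 \le m_2 \le m_1 \le n$ with $m_1,m_2 \in \Z$. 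When $m_1 > m_2$, iterating Pascal's identity and then using the elementary inequality $dj^{d-1} \le (j+1)^d - j^d$ with telescoping gives
\begin{equation*}
\binom{m_1}{d} - \binom{m_2}{d} = \sum_{k=m_2+1}^{m_1}\binom{k-1}{d-1} \le \frac{1}{(d-1)!}\sum_{k=m_2+1}^{m_1}(k-1)^{d-1} \le \frac{m_1^d - m_2^d}{d!}.
\end{equation*}
Combining this with $\binom{n}{d} \ge (n-d+1)^d/d! \ge (n/2)^d/d!$ for $n \ge 2d$ yields $R^n_i \le 2^d R_i$ uniformly in $i$, $s$, and $n$; the degenerate cases $m_1 = m_2$ and $m_1 < d$ force $R^n_i = 0$, so the bound holds trivially there (in the latter case the summand in $S_n$ is even non-positive).

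With the ratio bound in hand, for every $(i,s)$ with $R_i(s) > 0$,
\begin{equation*}
\frac{(R^n_i(s))^{2q}}{R_i(s)^{2q-1}} - R_i(s) = R_i(s)\Big[(R^n_i(s)/R_i(s))^{2q} - 1\Big] \le (2^{2qd} - 1) R_i(s),
\end{equation*}
and the telescoping identity $\sum_{i \ge 0}R_i(\bar\mu^n(s)) = \big(\sum_{j\ge 0}\bar\mu^n_j(s)\big)^d - \lim_i\big(\sum_{j\ge i}\bar\mu^n_j(s)\big)^d = 1$ yields the claimed deterministic bound on $S_n$. The only non-routine step is the uniform ratio comparison $R^n_i/R_i \le 2^d$; all remaining manipulations are elementary algebraic and telescoping arguments. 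The essential feature enabling the combinatorial estimate is that $\bar\mu^n$ lives on the integer lattice, so that $\binom{nq}{d}$ involves only non-negative integer arguments and the hockey-stick identity applies cleanly.
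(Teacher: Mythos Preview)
Your proof is correct and follows the same high-level structure as the paper's: establish a uniform bound $R^n_i/R_i \le C$, rewrite the summand as $R_i\bigl[(R^n_i/R_i)^{2q}-1\bigr]$, and use the telescoping identity $\sum_{i\ge 0}R_i=1$ to obtain a deterministic bound on $S_n$, after which $a^2(n)\to 0$ finishes the job. One small omission: you only establish $S_n \le (2^{2qd}-1)T$, which gives $\limsup \le 0$; for the full $\lim = 0$ you also need a lower bound, but this is immediate from $(R^n_i)^{2q}/R_i^{2q-1}\ge 0$, whence $S_n \ge -\sum_i\int_0^T R_i(s)\,ds = -T$.

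Where you genuinely differ from the paper is in how the ratio bound is obtained. The paper expands $\binom{n\theta}{d}/\binom{n}{d}-\theta^d$ as a polynomial in $\theta$, takes differences at $\theta=q_i,q_{i+1}$, and then invokes an auxiliary monotonicity lemma (their Lemma~\ref{lemma:monotone}, stating that $(x^k-y^k)/(x^d-y^d)$ is decreasing in both arguments on $0\le y<x\le 1$) to bound each term $(q_i^k-q_{i+1}^k)/(q_i^d-q_{i+1}^d)$ by its value at the lattice point $(1/n,0)$. You instead use the hockey-stick identity $\binom{m_1}{d}-\binom{m_2}{d}=\sum_{k=m_2+1}^{m_1}\binom{k-1}{d-1}$ together with the elementary telescoping bound $d\sum_{j=m_2}^{m_1-1} j^{d-1}\le m_1^d-m_2^d$, arriving directly at the explicit constant $2^d$. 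Your route is more self-contained (no separate lemma needed) and produces an explicit constant; the paper's route gives the two-sided bound $|R^n_i-R_i|/R_i\le C$, which is slightly more than the lemma requires but no harder to apply. Both arguments rest on the fact that $\bar\mu^n$ lives on the lattice $\frac{1}{n}\Z_+^{\N}$ --- you make this explicit via the jump analysis, while the paper uses it implicitly when it evaluates the monotone function at $(1/n,0)$.
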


\begin{proof}
Omitting the dependence of $\tilde{R}_i^n(\etatil)$ and $\tilde{R}_i(\etatil)$ on $\etatil$ in \eqref{eq:tildeRn}, we first show that
\be\label{eq:ets1'} 
\left|\frac{\tilde{R}_i^n-\tilde{R}_i}{\tilde{R}_i}\right| \one_{\{\tilde{R}_i > 0\}} \le C
\ee 
for some $C \in (0,\infty)$.
For $\theta \in \{\frac{1}{n},\frac{2}{n},\dotsc,1\}$, we have that 
$$
\frac{\binom{n\theta}{d}}{\binom{n}{d}} - \theta^d =\frac{n\theta(n\theta-1)(n\theta-2)\dotsm(n\theta-d+1)}{n(n-1)(n-2)\dotsm(n-d+1)}-\theta^d = \frac{a_d(n)\theta^d + a_{d-1}n^{d-1}\theta^{d-1} + \dotsb +a_1n\theta}{n(n-1)(n-2)\dotsm(n-d+1)},
$$
where
$$a_d(n)=n^{d}-n(n-1)(n-2)\dotsm(n-d+1)$$
is a polynomial of $n$ with degree $d-1$, and $a_k$ is a constant only depending on $k$, for $k=d-1,d-2,\dotsc,1$.
For example,
$$a_1=(-1)(-2)\dotsm(-d+1).$$
Therefore, letting $q_i = \sum_{j \ge i} \tilde{\eta}_j$, replacing $\theta$ with $q_i$ and $q_{i+1}$, and then taking difference, we have
\begin{align*}
	\left|\frac{\tilde{R}_i^n-\tilde{R}_i}{\tilde{R}_i}\right| \one_{\{\tilde{R}_i > 0\}} & = \frac{\left|a_d(n)(q_i^d-q_{i+1}^d)+\sum_{k=1}^{d-1}a_kn^k(q_i^k-q_{i+1}^k)\right|}{n(n-1)(n-2)\dotsm(n-d+1)} \frac{1}{q_i^d-q_{i+1}^d} \one_{\{q_i^d-q_{i+1}^d > 0\}} \\
	& \le \frac{a_d(n)}{\Pi_{i=0}^{d-1} (n-i)} + \sum_{k=1}^{d-1}|a_k| \frac{n^k}{\Pi_{i=0}^{d-1} (n-i)} \frac{q_i^k-q_{i+1}^k}{q_i^d-q_{i+1}^d} \one_{\{q_i^d-q_{i+1}^d > 0\}}.
\end{align*}
For the first term on the right hand side, 
$$\frac{a_d(n)}{\Pi_{i=0}^{d-1} (n-i)} \le \frac{C}{n}.$$
For the second term on the right hand side and each $k=1,2,\dotsc,d-1$,
\begin{equation*}
	\frac{n^k}{\Pi_{i=0}^{d-1} (n-i)} \frac{q_i^k-q_{i+1}^k}{q_i^d-q_{i+1}^d} \one_{\{q_i^d-q_{i+1}^d > 0\}} \le \frac{n^k}{\Pi_{i=0}^{d-1} (n-i)} \frac{(1/n)^k-0^k}{(1/n)^d-0^d} = \frac{n^d}{\Pi_{i=0}^{d-1} (n-i)} \le C,
\end{equation*}
where the first inequality follows from Lemma \ref{lemma:monotone} below.
Combining these estimates gives \eqref{eq:ets1'}.

From \eqref{eq:ets1'} we have
$$\left| \frac{(R^n_i(s))^{2q}}{(R_i(s))^{2q-1}} - R_i(s) \right| \one_{\{R_i(s) > 0\}} = \left| \left( \frac{R^n_i(s)-R_i(s)}{R_i(s)} + 1 \right)^{2q} - 1 \right| R_i(s) \one_{\{R_i(s) > 0\}} \le C_{2q} R_i(s).$$
Then
\begin{align*}
	& a^2(n) \log \E\left[ \exp\left( \sum_{i=0}^\infty \int_0^T \left( \frac{(R^n_i(s))^{2q}}{(R_i(s))^{2q-1}} - R_i(s) \right)ds\right) \right] \\
	& \le a^2(n) \log \E [\exp(\sum_{i=0}^\infty \int_0^T C_{2q} R_i(s) \,ds)] = a^2(n) C_{2q} T \to 0
\end{align*}
as $n \to \infty$.
This completes the proof.
\end{proof}

\begin{Lemma}\label{lemma:monotone}
	Fix integers $d > k \ge 1$.
	Then the function $f(x,y):=\frac{x^k-y^k}{x^d-y^d}$ is decreasing in $x$ and $y$ for $0 \le y < x \le 1$.
\end{Lemma}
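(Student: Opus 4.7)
The plan is to reduce this two-variable inequality to two clean one-variable inequalities via the substitution $t := y/x \in [0,1)$. Factoring $x^{d-1}$ out of numerator and denominator of $f$ gives $f(x,y) = x^{k-d}\phi(t)$, where
$$
\phi(t) := \frac{1-t^k}{1-t^d}, \qquad t\in[0,1).
$$
The chain rule then yields $\partial_y f = x^{k-d-1}\phi'(t)$ and $\partial_x f = -x^{k-d-1}\bigl[(d-k)\phi(t)+t\phi'(t)\bigr]$. Since $x>0$, the lemma is equivalent to the two scalar inequalities
$$
\text{(A)}\ \ \phi'(t)\le 0,\qquad \text{(B)}\ \ (d-k)\phi(t)+t\phi'(t)\ge 0, \qquad t\in[0,1).
$$

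Each of (A) and (B) reduces, by clearing denominators, to an elementary polynomial inequality on $[0,1]$ with terminal value $0$ at $t=1$. For (A), the sign of $\phi'(t)$ coincides (up to a positive factor) with that of $p_1(t):=d\,t^{d-k}-(d-k)t^d-k$, and $p_1(0)=-k<0$, $p_1(1)=0$, while $p_1'(t)=d(d-k)t^{d-k-1}(1-t^k)\ge 0$. Hence $p_1$ is nondecreasing with $p_1(1)=0$, so $p_1\le 0$ and $\phi'\le 0$. For (B), the cleanest packaging is to introduce
$$
\psi(t):=t^{d-k}\phi(t)=\frac{t^{d-k}-t^d}{1-t^d},
$$
so that $\psi'(t)=t^{d-k-1}\bigl[(d-k)\phi(t)+t\phi'(t)\bigr]$ and (B) becomes $\psi'\ge 0$. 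A direct quotient-rule computation shows $\psi'(t)$ has the sign of $p_2(t):=(d-k)-d\,t^k+k\,t^d$; since $p_2(0)=d-k>0$, $p_2(1)=0$, and $p_2'(t)=dk\,t^{k-1}(t^{d-k}-1)\le 0$, $p_2$ is nonincreasing with $p_2(1)=0$, giving $p_2\ge 0$ and hence (B).

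The only real pitfall is the sign bookkeeping for $\partial_x f$, where $k-d<0$ and $\phi'\le 0$ appear with opposite signs and risk cancellation going the wrong way; routing the argument through the single auxiliary function $\psi$ (whose monotonicity directly encodes (B)) makes this transparent. As an orthogonal simplification, one could exploit the symmetry $f(x,y)=f(y,x)$ to reduce the whole claim to monotonicity in a single variable, but the treatment above handles both partials uniformly at essentially no extra cost.
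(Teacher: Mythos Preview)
Your proof is correct and follows essentially the same strategy as the paper's: both compute the partial derivatives, substitute the ratio $y/x$ (or its reciprocal), and reduce to a polynomial inequality on an interval, verified by checking the endpoint value and the sign of the derivative. The only difference is packaging: the paper uses a single auxiliary function $g(z)=dz^k-kz^d-d+k$ (which is your $-p_2$) and evaluates it at both $y/x\in(0,1)$ and $x/y\in(1,\infty)$ to handle the two partials simultaneously, whereas you stay on $[0,1)$ and introduce the second polynomial $p_1$ (in fact $p_1(t)=t^d g(1/t)$, so the two computations are equivalent). Your intermediate objects $\phi$ and $\psi$ add a layer of structure but carry the same content.
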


\begin{proof}
	First consider the function $g(z) = dz^k-kz^d-d+k$, $z \in (0,\infty)$.
	Clearly $g'(z)=dk(z^{k-1}-z^{d-1}$ is positive for $0 < z < 1$ and negative for $z > 1$. 
	Since $g(1)=0$, we have $g(z) < 0$ for all $z \in (0,1) \cup (1,\infty)$.
	
	Now note that
	$$\frac{\partial f}{\partial x} = \frac{kx^{k-1}(x^d-y^d) - (x^k-y^k)dx^{d-1}}{(x^d-y^d)^2} = \frac{x^{k+d-1}}{(x^d-y^d)^2} g(\frac{y}{x}), \quad \frac{\partial f}{\partial y} = \frac{y^{k+d-1}}{(y^d-x^d)^2} g(\frac{x}{y}).$$
	Therefore $\frac{\partial f}{\partial x} < 0$ and $\frac{\partial f}{\partial y} < 0$ for $0 < y < x \le 1$.
	The remaining boundary case $y=0$ is trivial, as $f(x,0)=x^{k-d}$ is decreasing in $x>0$.
	This completes the proof.
\end{proof}

\bibliographystyle{plain}

\begin{thebibliography}{10}

\bibitem{aghajani2019hydrodynamic}
Reza Aghajani and Kavita Ramanan.
\newblock The hydrodynamic limit of a randomized load balancing network.
\newblock {\em The Annals of Applied Probability}, 29(4):2114--2174, 2019.

\bibitem{banerjee2019join}
Sayan Banerjee and Debankur Mukherjee.
\newblock Join-the-shortest queue diffusion limit in halfin?whitt regime:
  Tail asymptotics and scaling of extrema.
\newblock {\em The Annals of Applied Probability}, 29(2):1262--1309, 2019.

\bibitem{bramson2012asymptotic}
Maury Bramson, Yi~Lu, and Balaji Prabhakar.
\newblock Asymptotic independence of queues under randomized load balancing.
\newblock {\em Queueing Systems}, 71:247--292, 2012.

\bibitem{braverman2020steady}
Anton Braverman.
\newblock Steady-state analysis of the join-the-shortest-queue model in the
  halfin--whitt regime.
\newblock {\em Mathematics of Operations Research}, 45(3):1069--1103, 2020.

\bibitem{brightwell2018supermarket}
Graham Brightwell, Marianne Fairthorne, and Malwina~J Luczak.
\newblock The supermarket model with bounded queue lengths in equilibrium.
\newblock {\em Journal of Statistical Physics}, 173:1149--1194, 2018.

\bibitem{BudhirajaDupuisGanguly2015moderate}
A.~Budhiraja, P.~Dupuis, and A.~Ganguly.
\newblock {Moderate deviation principles for stochastic differential equations
  with jumps}.
\newblock {\em The Annals of Probability}, 44(3):1723--1775, 2016.

\bibitem{BudhirajaWu2017moderate}
A.~Budhiraja and R.~Wu.
\newblock {Moderate deviation principles for weakly interacting particle
  systems}.
\newblock {\em Probability Theory and Related Fields}, 168(3):721--771, 2017.

\bibitem{budhiraja2019diffusion}
Amarjit Budhiraja and Eric Friedlander.
\newblock Diffusion approximations for load balancing mechanisms in cloud
  storage systems.
\newblock {\em Advances in Applied Probability}, 51(1):41--86, 2019.

\bibitem{BudhirajaFriedlanderWu2019many}
Amarjit Budhiraja, Eric Friedlander, and Ruoyu Wu.
\newblock Many-server asymptotics for join-the-shortest-queue: Large deviations
  and rare events.
\newblock {\em The Annals of Applied Probability}, 31(5):2376--2419, 2021.

\bibitem{BudhirajaMukherjeeWu2019supermarket}
Amarjit Budhiraja, Debankur Mukherjee, and Ruoyu Wu.
\newblock Supermarket model on graphs.
\newblock {\em The Annals of Applied Probability}, 29(3):1740--1777, 2019.

\bibitem{cardinaels2022power}
Ellen Cardinaels, Sem Borst, and Johan~SH van Leeuwaarden.
\newblock Power-of-two sampling in redundancy systems: The impact of assignment
  constraints.
\newblock {\em Operations Research Letters}, 50(6):699--706, 2022.

\bibitem{CoppiniDietertGiacomin2019law}
Fabio Coppini, Helge Dietert, and Giambattista Giacomin.
\newblock {A law of large numbers and large deviations for interacting
  diffusions on Erd{\"{o}}s--R{\'e}nyi graphs}.
\newblock {\em Stochastics and Dynamics}, 20(02):2050010, 2020.

\bibitem{Der2022scalable}
Mark~Van der Boor, Sem~C Borst, Johan~SH Van~Leeuwaarden, and Debankur
  Mukherjee.
\newblock Scalable load balancing in networked systems: A survey of recent
  advances.
\newblock {\em SIAM Review}, 64(3):554--622, 2022.

\bibitem{EschenfeldtGamarnik18}
Patrick Eschenfeldt and David Gamarnik.
\newblock Join the shortest queue with many servers. the heavy-traffic
  asymptotics.
\newblock {\em Mathematics of Operations Research}, 43(3):867--886, 2018.

\bibitem{graham2005functional}
Carl Graham.
\newblock Functional central limit theorems for a large network in which
  customers join the shortest of several queues.
\newblock {\em Probability theory and related fields}, 131:97--120, 2005.

\bibitem{IkedaWatanabe1990SDE}
N.~Ikeda and S.~Watanabe.
\newblock {\em {Stochastic Differential Equations and Diffusion Processes}},
  volume~24 of {\em North-Holland Mathematical Library}.
\newblock Elsevier, 1981.

\bibitem{luczak2006maximum}
Malwina~J Luczak and Colin McDiarmid.
\newblock On the maximum queue length in the supermarket model.
\newblock {\em The Annals of Probability}, pages 493--527, 2006.

\bibitem{luczak2005strong}
Malwina~J Luczak and James Norris.
\newblock Strong approximation for the supermarket model.
\newblock {\em Annals of Applied Probability}, pages 2038--2061, 2005.

\bibitem{Mitzenmacher01}
Michael Mitzenmacher.
\newblock {The power of two choices in randomized load balancing}.
\newblock {\em IEEE Trans. Parallel Distrib. Syst.}, 12(10):1094--1104, 2001.

\bibitem{MukherjeeBorstLeeuwaardenWhiting18}
Debankur Mukherjee, Sem~C. Borst, Johan S.~H. van Leeuwaarden, and Philip~A.
  Whiting.
\newblock Universality of power-of-d load balancing in many-server systems.
\newblock {\em Stochastic Systems}, 8(4):265--292, 2018.

\bibitem{RuttenMukherjee2022load}
Daan Rutten and Debankur Mukherjee.
\newblock {Load balancing under strict compatibility constraints}.
\newblock {\em Math. Oper. Res.}, 2022.

\bibitem{VvedenskayaDobrushinKarpelevich96}
Nikita~Dmitrievna Vvedenskaya, Roland~L'vovich Dobrushin, and
  Fridrikh~Izrailevich Karpelevich.
\newblock {Queueing system with selection of the shortest of two queues: An
  asymptotic approach}.
\newblock {\em Problemy Peredachi Informatsii}, 32(1):20--34, 1996.

\end{thebibliography}

\end{document}